\theoremstyle{plain}
\newtheorem{thm}{Theorem}[section]
\newtheorem{lemma}[thm]{Lemma}
\newtheorem{nota}[thm]{Notation}
\newtheorem{prop}[thm]{Proposition}
\newtheorem{cor}[thm]{Corollary}
\newtheorem{conj}[thm]{Conjecture}
\newtheorem{Ass}{Assumption}
\theoremstyle{definition}
\newtheorem{defn}[thm]{Definition}
\theoremstyle{remark}
\newtheorem{remark}[thm]{Remark}
\newtheorem{remarks}[thm]{Remarks}
\newtheorem*{thank}{{\bf Acknowledgments}}
\newcommand{\nc}{\newcommand}
\def\makeop#1{\expandafter\def\csname#1\endcsname
  {\mathop{\rm #1}\nolimits}\ignorespaces}
\def\makebb#1{\expandafter\def
  \csname bb#1\endcsname{{\mathbb{#1}}}\ignorespaces}
\def\makebf#1{\expandafter\def\csname bf#1\endcsname{{\bf
      #1}}\ignorespaces}
\def\makegr#1{\expandafter\def
  \csname gr#1\endcsname{{\mathfrak{#1}}}\ignorespaces}
\def\makescr#1{\expandafter\def
  \csname scr#1\endcsname{{\EuScript{#1}}}\ignorespaces}
\def\makecal#1{\expandafter\def\csname cal#1\endcsname{{\mathcal
      #1}}\ignorespaces}
\def\doLetters#1{#1A #1B #1C #1D #1E #1F #1G #1H #1I #1J #1K #1L #1M
                 #1N #1O #1P #1Q #1R #1S #1T #1U #1V #1W #1X #1Y #1Z}
\def\doletters#1{#1a #1b #1c #1d #1e #1f #1g #1h #1i #1j #1k #1l #1m
                 #1n #1o #1p #1q #1r #1s #1t #1u #1v #1w #1x #1y #1z}
\def\Spec{{\rm Spec}\,}
\def\Qpbar{\overline{{\bbQ}_p}}
\def\Qp{{\bbQ}_p}
\def\Zp{{\bbZ}_p}
\def\Qbar{\overline{\bbQ}}
\newcommand{\Z}{\mathbb Z}
\newcommand{\Q}{\mathbb Q}
\newcommand{\C}{\mathbb C}
\renewcommand{\O}{\mathcal O} 
\newcommand{\F}{\mathbb F}
\newcommand{\p}{\mathfrak p}
\newcommand{\npr}{\noindent }
\nc{\embed}{\hookrightarrow}
\newcommand{\DR}{\{0,\infty\}_{DM}}
\nc{\ol}{\overline}
\nc{\wt}{\widetilde}
\nc{\wh}{\widehat}
\nc{\opp}{\mathrm{opp}}
\title[]{A refined study of Mazur's Eisenstein theory}
\author{Jun Wang}
\address{
Department of Mathematics, The University of British Columbia}
\email{junwang@math.ubc.ca}
\begin{document}

\begin{abstract}
We apply the methods of Fukaya, Kato and Sharifi to refine Mazur's study
of the Eisenstein ideal. Given prime numbers $N$ and $p\geq 5$ such that $p\mid \varphi(N)$, we study the quotient of the cohomology group of modular curve $X_{0}(N)$ by the square of the Eisenstein ideal. We study two invariants $b,c$ attached to this quotient and compute $c$. We propose a conjecture about the invariant $b$ which relates the structure of the ray class group of conductor $N$ to the modular symbols of $X_{0}(N)$. Assuming this conjecture, We compute the invariant $b$.
\end{abstract}
\maketitle
\setcounter{tocdepth}{1}
\tableofcontents

\section{Introduction}
In Mazur's seminal work \cite{Ma1}, he gave a description of the Eisenstein quotient of the \'etale cohomology group of modular curve of prime level. In this paper, we use the methods of Fukaya, Kato and Sharifi to study the quotient of this cohomology group by the square of the Eisenstein ideal.

\subsection{Motivations}
Let $N$ be a prime and let $p\geq 5$ such that $p\mid(N-1)$. let $\mathfrak{h}$ be the $p$-adic Hecke algebra acting on the cohomology of \'etale cohomology group of modular curve $X_{0}(N)$, let $I$ be the Eisenstein ideal.
In \cite{Ma1}, Mazur proved the following theorem:
\begin{thm}[Mazur]
There is a direct sum decomposition of $\Gal(\Qbar/\Q)$-modules $H/IH=H^{+}/IH^{+}\oplus H^{-}/IH^{-}$,
where $H=H^1_{\text{\'et}}(X_{0}(N)_{/\bar{\Q}},\Zp)$, $H^{\pm}$ denote the eigenspaces for complex conjugation.
\end{thm}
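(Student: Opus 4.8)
The decomposition $H/IH=H^{+}/IH^{+}\oplus H^{-}/IH^{-}$ holds automatically as $\mathfrak{h}$-modules. Indeed, writing $G_{\Q}=\Gal(\Qbar/\Q)$, fixing a complex conjugation $c\in G_{\Q}$, and setting $e^{\pm}=\tfrac12(1\pm c)$, these are idempotents on $H$ (as $p$ is odd) that commute with the Hecke action --- the correspondences $T_{\ell}$ are defined over $\Q$, hence $G_{\Q}$-equivariant, and in particular commute with $c$ --- and $H^{\pm}=e^{\pm}H$. The content of the theorem is therefore that each summand is stable under all of $G_{\Q}$, which is \emph{not} formal: $e^{\pm}$ commute with $c$ but not with the rest of $G_{\Q}$, since $\sigma c\sigma^{-1}$ is a different complex conjugation for general $\sigma$. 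So the plan is to show that the two $c$-eigenspaces of the Galois module $H/IH$ are Galois-stable.

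I would first localize. As $I$ is the Eisenstein ideal, $H/IH$ is supported at the Eisenstein maximal ideal $\mathfrak{m}=(p,I)$, so one may work with $H_{\mathfrak{m}}$ over $\mathfrak{h}_{\mathfrak{m}}$, where $\mathfrak{h}_{\mathfrak{m}}/I\cong\Z/p^{v}$ with $v=v_{p}(N-1)$. The essential input is Mazur's theorem that $\mathfrak{h}_{\mathfrak{m}}$ is Gorenstein, equivalently that $H_{\mathfrak{m}}$ is free of rank two over $\mathfrak{h}_{\mathfrak{m}}$. Granting this, $H^{+}_{\mathfrak{m}}$ and $H^{-}_{\mathfrak{m}}$ --- being $\mathfrak{h}_{\mathfrak{m}}$-module direct summands of $H_{\mathfrak{m}}\cong\mathfrak{h}_{\mathfrak{m}}^{2}$, each of $\mathfrak{h}_{\mathfrak{m}}$-rank one (by multiplicity one over $\mathfrak{h}_{\mathfrak{m}}\otimes\Q_{p}$) --- are themselves free of rank one, so $H^{\pm}/IH^{\pm}\cong\mathfrak{h}/I$ is cyclic of order $p^{v}$. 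Next I would produce a Galois-stable line inside each eigenspace. Poincar\'e duality and Kummer theory identify $H$ with $T_{p}J_{0}(N)\otimes\Zp(-1)$, and (again using the rank-two freeness) $H/IH$ with $J_{0}(N)[I]_{\mathfrak{m}}(-1)$. The group $J_{0}(N)[I]_{\mathfrak{m}}$ contains two Galois-stable submodules of order $p^{v}$, each killed by $I$: the $p$-part of the cuspidal subgroup $C=\langle(0)-(\infty)\rangle$, on which $G_{\Q}$ acts trivially because the cusps are $\Q$-rational, and the $p$-part of the Shimura subgroup $\Sigma\cong\mu_{n}$, on which $G_{\Q}$ acts through the cyclotomic character; that $C$ and $\Sigma$ have order $n$ and are annihilated by $I$ is part of the work of Mazur and Ribet, grounded in the Eichler--Shimura congruence $(\Frob_{\ell}-1)(\Frob_{\ell}-\ell)\equiv0\pmod{I}$ for $\ell\nmid Np$. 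Now twist by $\Zp(-1)$, on which $c$ acts by $-1$: then $c$ acts by $-1$ on $C_{\mathfrak{m}}(-1)$ and by $+1$ on $\Sigma_{\mathfrak{m}}(-1)$, so $C_{\mathfrak{m}}(-1)\subseteq H^{-}/IH^{-}$ while $\Sigma_{\mathfrak{m}}(-1)\subseteq H^{+}/IH^{+}$.

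Since each of $H^{+}/IH^{+}$ and $H^{-}/IH^{-}$ is cyclic of order $p^{v}$ and contains a Galois-stable submodule of order $p^{v}$, it coincides with that submodule; hence both eigenspaces are Galois-stable, and $H/IH=H^{+}/IH^{+}\oplus H^{-}/IH^{-}$ as $G_{\Q}$-modules, with $H^{+}/IH^{+}$ carrying the trivial action and $H^{-}/IH^{-}$ the inverse cyclotomic action. (Alternatively, once $H^{-}/IH^{-}$ is known to be Galois-stable one can deduce stability of $H^{+}/IH^{+}$ from the Poincar\'e pairing $H_{\mathfrak{m}}\times H_{\mathfrak{m}}\to\Zp(-1)$, which is perfect, $G_{\Q}$-equivariant, and makes the two eigenspaces exact annihilators of one another because $c$ acts by $-1$ on the target.) The main obstacle is the structural input of the middle step --- Mazur's Gorenstein property of $\mathfrak{h}_{\mathfrak{m}}$, i.e.\ the multiplicity-one statement that $H_{\mathfrak{m}}$ is free of rank two --- without which one cannot conclude that the cuspidal and Shimura lines fill their respective eigenspaces; establishing that, together with the annihilation of $C$ and $\Sigma$ by $I$, is the real substance of the argument.
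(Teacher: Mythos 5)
Your sketch is correct and is essentially Mazur's own argument, which the paper does not reprove but simply cites (see Theorem \ref{Thm 3.3} and Remark \ref{Rem 3.12}): localize at the Eisenstein maximal ideal, use the Gorenstein/rank-two-freeness property to see each eigenspace of $H/IH$ is cyclic of order $p^{v}$, and fill the eigenspaces with the Galois-stable cuspidal and Shimura lines inside $J_{0}(N)[I]_{\mathfrak{m}}$. The only cosmetic difference is that the paper's Remark \ref{Rem 3.12} phrases the identification dually, via $H^{1}(X)^{-}/IH^{1}(X)^{-}\cong\Hom(\Sigma,\Qp/\Zp)$ and $H^{1}(X)^{+}/IH^{1}(X)^{+}\cong\Hom(C,\Qp/\Zp)$, which yields the same Galois-module conclusion as your twist by $\Zp(-1)$.
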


One natural question is: what can we say about the $\mathfrak{h}[\Gal(\Qbar/\Q)]$-module $H/I^2H$? 
Let $K=\Q(\zeta_{q})$, where $q$ is the highest power of $p$ dividing $N-1$. Viewing $H/I^2H$ as representation of $\Gal(\overline{\Q}/K)$, we have following four \textbf{group homomorphisms}:
\begin{enumerate}\label{abcd}
\item $a: \Gal(\overline{\Q}/K)\rightarrow \text{Aut}_{\mathfrak{h}}(H^-/I^2H^-)$,
\item $b: \Gal(\overline{\Q}/K)\rightarrow \Hom_{\mathfrak{h}}(H^+/I^2H^+,H^-/I^2H^-)$,
\item $c: \Gal(\overline{\Q}/K)\rightarrow \Hom_{\mathfrak{h}}(H^-/I^2H^-,H^+/I^2H^+)$,
\item $d: \Gal(\overline{\Q}/K)\rightarrow \text{Aut}_{\mathfrak{h}}(H^+/I^2H^+)$.
\end{enumerate}

One can prove that the homomorphisms $a$ and $d$ cut out a field extension $F_{0}$, the homomorphism $b$ cuts out a field $F_{-1}$, and the homomorphism $c$ cuts out a field $F_{1}$, where $F_{i}$ means that the group $\Gal(\Q(\zeta_{q})/\Q)$ acts on $\Gal(F_{i}/K)$ via $\chi_{p}^{i}$. 

Let $U=\F_{N}^{\times}/(\F_{N}^{\times})^q$. Using global class field theory and Galois cohomology, we have the following canonical isomorphisms of abelian groups:
\begin{enumerate}\label{classfield}
    \item $\Gal(F_{0}/K)\cong U$,
    \item $\Gal(F_{1}/K)\cong \mu_{q}$,
    \item $\Gal(F_{-1}/K)\cong U^{\otimes 2}\otimes \mu_{q}^{-1}$,
\end{enumerate}
where $\mu_{q}^{-1}=\Hom(\mu_{q}, \Z/q\Z)$. The $F_{i}$ are class fields, and all of them are totally tamely ramified at $N$ over $K$.
\begin{remark}
We identify $\Gal(F_{-1}/K)$ with $H^2(\Z[\tfrac{1}{p},\zeta_{N}],\Zp(2))_{G}$, where $G=Gal(\Q(\zeta_{N})/\Q)$. The isomorphism $(3)$ is from this identification, for the details, one can see Proposition \ref{Prop 4.6}.
\end{remark}
In \cite{Ma1}, Mazur uses modular symbols to construct a canonical winding isomorphism:
\begin{equation*}\label{winding}
e:I/I^2\xrightarrow{\sim}H^-/IH^-\otimes \mu_{q}\xrightarrow{\sim} U.
\end{equation*}

We know that the images of $a,d$ are both in $1+I/I^2$. Identifying $1+I/I^2$ with $I/I^2$, we may view $a$ and $d$ as homomorphisms:
\begin{enumerate}\label{ad}
\item $a:U\rightarrow I/I^2$,
\item $d:U\rightarrow I/I^2$.
\end{enumerate}

Using the winding homomorphism $e$, we get maps $I/I^2\rightarrow I/I^2$ that are multiplication by an integer in $\Z/q\Z$. We use $\widetilde{a}$ and $\widetilde{d}$ to denote these two numbers. The invariants $\widetilde{a}$ and $\widetilde{d}$ have been already computed by B. Mazur, F. Calegari, R. Sharifi and W. Stein in the unpublished manuscript \cite{Ma2}, and their results are $\widetilde{a}=-1, \widetilde{d}=1$.

For the homomorphisms $b$ and $c$, we need a more careful analysis. We carefully choose an isomorphism $H^+/IH^+\cong \Z/q\Z$: see Remark \ref{Rem 3.11}. For $H^-/IH^-$, we can identify it with $I/I^2\otimes\mu_{q}^{-1}$ using $e$.

It is easy to see that the image of $b\in\Hom_{\mathfrak{h}}(H^+/IH^+$, $IH^-/I^2H^-)$, the image of $c\in\Hom_{\mathfrak{h}}(H^-/IH^-, IH^+/I^2H^+)$.

Since 
\begin{enumerate}
    \item $\Hom_{\mathfrak{h}}(H^+/IH^+$, $IH^-/I^2H^-)\cong IH^-/I^2H^-\cong I^2/I^3\otimes\mu_{q}^{-1}$,
    \item $\Hom_{\mathfrak{h}}(H^-/IH^-, IH^+/I^2H^+)\cong \Hom_{\mathfrak{h}}(I/I^2\otimes \mu_{q}^{-1},I/I^2)\cong \mu_{q}$,
\end{enumerate}
we can rewrite $b$ and $c$ as follows:
\begin{enumerate}
    \item $b:\Gal(F_{-1}/K)\cong H^2(\Z[\tfrac{1}{p},\zeta_{N}],\Zp(2))_{G}\cong U^{\otimes 2}\otimes\mu_{q}^{-1}\cong I^2/I^3\otimes\mu_{q}^{-1}\rightarrow I^2/I^3\otimes \mu_{q}^{-1}$,
    \item $c:\Gal(F_{1}/K)\cong \mu_{q}\rightarrow \mu_{q}$.
\end{enumerate}
Hence the homomorphisms $b$ and $c$ give us two integers in $\Z/q\Z$ which were constructed in \cite{Ma2}. Let's use $\widetilde{b}$ and $\widetilde{c}$ to denote them. The integers $\widetilde{b}$ and $\widetilde{c}$ are units in $\Z/q\Z$, which can be seen using \cite{CE}, or \cite{WWE}. What we want to understand in the paper are the exact values of $\widetilde{b}$ and $\widetilde{c}$.

\begin{remarks}
\begin{enumerate}
\item The map $b$ is a map from class group $\Gal(F_{-1}/K)$ to $IH^{-}/I^2H^{-}$. Note that $IH^{-}/I^2H^{-}$ can be described explicitly by modular symbols, so the map $b$ provides an explicit presentation of class group $\Gal(F_{-1}/K)$ via modular symbols. This is one of the motivations of this paper.
\item The product $\widetilde{b}\widetilde{c}$ is related to Emmanuel Lecouturier's higher Eisenstein elements. For the details, one can see \cite[section 6.6]{Lec2}.
\end{enumerate}
\end{remarks}

\subsection{Results and the relations to the work of Fukaya, Kato and Sharifi}
Our first result is:
\begin{thm}
The invariant $\tilde{c}$ is 1.
\end{thm}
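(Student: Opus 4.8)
The homomorphism $c\colon \Gal(\overline{\Q}/K)\to\Hom_{\mathfrak h}(H^-/I^2H^-,H^+/I^2H^+)$ records the off-diagonal action of Galois on the $2$-step filtration of $H/I^2H$ in the $(-,+)$ direction. The plan is to compute $c$ directly from the structure of $H/I^2H$ as a Galois module, following the Fukaya–Kato–Sharifi approach, rather than through modular symbols. First I would choose a compatible basis: a generator $x^+$ of $H^+/I^2H^+$ lifting the chosen isomorphism $H^+/IH^+\cong\Z/q\Z$ from Remark~\ref{Rem 3.11}, and a generator $x^-$ of $H^-/I^2H^-$ corresponding under $e$ to $1\otimes$(generator) of $I/I^2\otimes\mu_q^{-1}$. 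With respect to the ordered basis $(x^-,x^+)$, an element $\sigma\in\Gal(\overline{\Q}/K)$ acts by an upper- or lower-triangular matrix, and the entry measuring $x^-\mapsto (\text{stuff})\cdot x^+$ is exactly $c(\sigma)$; by the class field theory isomorphism $(2)$, $c$ factors through $\mu_q=\Gal(F_1/K)$, and $\tilde c\in(\Z/q\Z)^\times$ is the scalar by which $c$ acts under that identification.

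The key step is to pin down the Galois representation on $H/I^2H$ explicitly enough to read off this entry. I would use the Eichler–Shimura / Galois description of $H$ together with the congruence between the Eisenstein series and the cusp forms in the Eisenstein component: modulo $I$, the representation is reducible with the two Jordan–Hölder factors being the trivial character (on $H^+/IH^+$, up to the chosen twist) and the cyclotomic character $\chi_p$ (on $H^-/IH^-$), and the extension data modulo $I^2$ is governed by cohomology classes in $H^1(\Q, \Z/q\Z(\pm1))$ coming from Kummer theory and the class field theory of $\Q(\zeta_N)$. Concretely, the $(-,+)$ off-diagonal term should be the Kummer cocycle attached to a $q$-th root of a unit — essentially the tautological class cutting out $F_1=K(\text{$q$-th roots of the relevant unit})$ — so that after normalizing by the winding isomorphism $e$ and the identification $\Hom_{\mathfrak h}(I/I^2\otimes\mu_q^{-1},I/I^2)\cong\mu_q$, the resulting scalar is forced to be $1$. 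The arithmetic input here is that $F_1$ is \emph{totally tamely ramified at $N$ over $K$} (stated in the excerpt): this rigidifies the cocycle — its ramification at $N$ must match the ramification of $H$ at $N$ coming from the Steinberg-type local behavior at the level prime — and fixes the normalization with no free constant, giving $\tilde c=1$ on the nose rather than merely up to a unit.

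The main obstacle I expect is the bookkeeping of normalizations: there are several identifications in play (the winding isomorphism $e$, the choice in Remark~\ref{Rem 3.11}, the duality $\Hom_{\mathfrak h}(I/I^2\otimes\mu_q^{-1},I/I^2)\cong\mu_q$, and the class field theory isomorphism $\Gal(F_1/K)\cong\mu_q$), and the content of the theorem is precisely that all of these are mutually compatible so that no spurious unit appears. I would handle this by tracing a single explicit element — the image of a Hecke operator $T_\ell-\ell-1$, whose class in $I/I^2$ is computed by Mazur's formula in terms of $\ell$ and the Eisenstein series — through the whole chain, checking at each identification that the induced map is the identity (not merely an isomorphism). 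The comparison with the local representation of $\Gal(\overline{\Q}_N/\Q_N)$ at the prime $N$, where $H$ is an extension of $\Z_p$ by $\Z_p(1)$ realized via the Tate curve, is where the scalar $1$ becomes visible: the off-diagonal cocycle there is the canonical one with no ambiguity, and matching it globally forces $\tilde c=1$.
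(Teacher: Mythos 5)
Your overall strategy---realize $c$ as a Kummer class and identify the element whose $q$-th root it extracts---is the right one and is in fact what the paper does, but your argument has a genuine gap at exactly the point where the theorem has content. Knowing that $c$ cuts out $F_1=K(N^{1/q})$ and is totally tamely ramified at $N$ only tells you that $c$ is the Kummer class of $N^{s}$ for \emph{some} unit $s\in(\Z/q\Z)^{\times}$: every such power generates the same field, has the same ramification at $N$, and is ``peu ramifi\'ee'' at $p$, so ramification and normalization considerations cannot ``force $\tilde c=1$ on the nose.'' The statement $\tilde c\in(\Z/q\Z)^{\times}$ is already known (from \cite{CE} or \cite{WWE}, as noted in the introduction); the theorem is precisely the determination of the exponent $s$, and your proposal supplies no mechanism for computing it. Your appeal to the local picture at $N$ (Tate curve, extension of $\Zp$ by $\Zp(1)$) is a plausible route, but ``the off-diagonal cocycle there is the canonical one with no ambiguity'' is an assertion, not a computation: one must actually evaluate the Tate parameter (equivalently the monodromy pairing) on the Eisenstein part, and that is a nontrivial calculation involving $\xi=(N-1)/12$ and the cuspidal divisor.

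The paper closes this gap by a concrete computation with the generalized Jacobian. It first identifies the extension (C) with the boundary extension coming from $H^1(Y)_{DM}$ (Proposition \ref{Prop 4.4}), dualizes to the sequence $0\to T[q]\to GJ[q]\to J[q]\to 0$, and then applies a general formula (Proposition \ref{Prop 4.5}) expressing the resulting class as the Kummer class of $(F/h^{q})(-D_2)$ for a function $F$ with divisor $q$ times the cuspidal divisor. Taking $F$ to be the appropriate power of $g=(\Delta(Nz)/\Delta(z))^{1/12}$ and evaluating $g(0)/g(\infty)=N^{a}$ with $a\equiv-1\bmod q$ (Theorem \ref{Cor 4.2}) is what pins down the exponent and yields $\tilde c=1$ after matching with the Kummer-theoretic isomorphism $G_1\cong\mu_q$. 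Some explicit evaluation of this kind (either of the modular unit at the cusps, as here, or of the Tate parameter at $N$) is unavoidable, and it is missing from your proposal.
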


\begin{remark}
Our proof follows from the method of \cite[Section 9.6]{FK}.
\end{remark}

The homomorphism $b$ has a conjectural inverse which is closely related to the work of Fukaya, Kato and Sharifi. We give a brief introduction of their work here.

In \cite{Sha1}, for positive integers $M$ and prime $p\geq 5$ such that $p\nmid M$, Sharifi formulated a series of  remarkable conjectures which relate the arithmetic of cyclotomic fields to the homology of modular curves. Roughly speaking, he used cup products of cyclotomic units to define two maps:
\begin{equation}
\varpi_{M_{r}}^{0}: H_{1}(X_{1}(M_{r}),\tilde{C}_{\infty},\Zp)^+\rightarrow H^2(\Z[\tfrac{1}{Np},\zeta_{M_{r}}],\Zp(2))^{+},
\,\,\,\,\varpi_{M_r}: H_1(X_{1}(M_{r}),\Zp)^{+}\rightarrow H^2(\Z[\tfrac{1}{p},\zeta_{M_{r}}],\Zp(2))^{+},
\end{equation}
where $M_{r}:=Mp^r (r\geq 0)$, $\tilde{C}_{\infty}$ is the set of cusps of $X_{1}(M_{r})$ that do not lie above the $0$-cusp of $X_{0}(M_{r})$. The map $\varpi_{M_r}^0$ takes the adjusted Manin symbol $[u,v]^{*}$ (Definition \ref{Def 4.5}) to the cup product $(1-\zeta_{M_{r}}^u,1-\zeta_{M_{r}}^v)$. 

The first conjecture Sharifi made is:
\begin{conj}[Sharifi](Eisenstein quotient conjecture)\label{Con 1.1}
Let $\mathfrak{I}_{\infty}$ be the ideal of the modular Hecke algebra generated by $T_{l}-1-\langle l\rangle l\,\, \text{for}\,\,l\nmid M_{r}$, and $T_{q}-1$ for $q\mid  M_r$, and let $I_{\infty}$ be the image of $\mathfrak{I}_{\infty}$ in the cuspidal Hecke algebra. Then $\varpi_{M_r}^{0}, \varpi_{M_r}$ induce maps:
$$\varpi_{M_r}^{0}: H_{1}(X_{1}(M_{r}),\tilde{C}_{\infty},\Zp)^+/\mathfrak{I}_\infty\rightarrow H^2(\Z[\tfrac{1}{Mp},\zeta_{M_{r}}],\Zp(2))^{+},$$
$$\varpi_{M_r}: H_1(X_{1}(M_{r}),\Zp)^{+}/I_\infty \rightarrow H^2(\Z[\tfrac{1}{p},\zeta_{M_{r}}],\Zp(2))^{+}.$$
\end{conj}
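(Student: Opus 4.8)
The plan is to reduce the asserted well-definedness to an explicit identity on Manin symbols and then to verify that identity using Merel's formula for the Hecke action together with the distribution relations among cyclotomic units; this is the template used in \cite{FK} for $\varpi_{M_r}$, which I would adapt to $\varpi^0_{M_r}$. First I would reduce to generators: the relative homology $H_1(X_1(M_r),\tilde{C}_\infty,\Zp)^+$ is generated over $\Zp$ by the adjusted Manin symbols $[u,v]^*$ (and $H_1(X_1(M_r),\Zp)^+$ by the ordinary Manin symbols), and $\varpi^0_{M_r}$ is, from its definition on symbols, equivariant for the diamond operators $\langle a\rangle$ on the source and, up to the Tate twist built into $\Zp(2)$, the automorphisms $\sigma_a\in\Gal(\Q(\zeta_{M_r})/\Q)$ on the target, where the target is given the Hecke-module structure of the Eisenstein character $T_\ell\mapsto 1+\langle\ell\rangle\ell$. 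It then suffices to show that $\varpi^0_{M_r}$ sends $(T_\ell-1-\langle\ell\rangle\ell)[u,v]^*$ to $0$ in $H^2(\Z[\tfrac{1}{Mp},\zeta_{M_r}],\Zp(2))^+$ for every $\ell\nmid M_r$ and every $[u,v]^*$, and likewise that $(T_q-1)[u,v]^*\mapsto 0$ for $q\mid M_r$; concretely, that $\varpi^0_{M_r}(T_\ell[u,v]^*)=(1-\zeta_{M_r}^u,1-\zeta_{M_r}^v)+\ell\,\sigma_\ell\cdot(1-\zeta_{M_r}^u,1-\zeta_{M_r}^v)$.

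The core of the argument is then a computation. Using Merel's formula one expands $T_\ell[u,v]^*=\sum_i[u_i,v_i]^*$ as a sum over the Heilbronn matrices of determinant $\ell$, so that $\varpi^0_{M_r}(T_\ell[u,v]^*)=\sum_i(1-\zeta_{M_r}^{u_i},1-\zeta_{M_r}^{v_i})$; one then collapses this sum into the two desired terms by repeatedly invoking bilinearity of the cup product, the Steinberg relation $(x,1-x)=0$, and the distribution and norm relations among cyclotomic units, such as $\prod_{j=0}^{\ell-1}(1-\zeta_\ell^{j}\zeta)=1-\zeta^{\ell}$ and its companions. The factor $\ell$ in $1+\langle\ell\rangle\ell$ reappears on the Galois side through the Tate twist in $\Zp(2)$. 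The operators $T_q$ for $q\mid M_r$ are treated the same way, now using the relation between cyclotomic units of conductor $M_r$ and of conductor $M_r/q$.

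For $\varpi^0_{M_r}$ two further points require care. One must check that the cup product genuinely lands in $H^2(\Z[\tfrac{1}{Mp},\zeta_{M_r}],\Zp(2))$ with its prescribed support — this is exactly why one works with adjusted Manin symbols and with the cusps $\tilde{C}_\infty$ that do not lie above the $0$-cusp, so that the ``denominators'' of the symbols match the primes at which ramification is permitted. For the non-relative map $\varpi_{M_r}$, once the first statement is known, the Hecke-equivariant surjection $H_1(X_1(M_r),\tilde{C}_\infty,\Zp)^+\twoheadrightarrow H_1(X_1(M_r),\Zp)^+$ and the fact that $I_\infty$ is the image of $\mathfrak{I}_\infty$ give the factoring through $/I_\infty$ with $\Z[\tfrac{1}{Mp}]$-coefficients; the remaining content is that the resulting class is unramified above $M$ (the passage to $\Z[\tfrac{1}{p}]$), which I would deduce from a local analysis at $M$ identifying the boundary (cusp) classes with the ramified part of $H^2$.

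The main obstacle is the interaction of these support conditions with the combinatorics of Merel's formula: it produces many terms, and collapsing them forces one to use relations in $H^2(\Z[\tfrac{1}{Mp},\zeta_{M_r}],\Zp(2))$ — the Steinberg relation and relations coming from $K_2$ of cyclotomic fields — that are invisible at the level of formal symbols, all while keeping the support and ramification under control so as to land in the correct (smaller) cohomology group. In the finite-level situation $M=N$, $r=0$ that is actually needed in the sequel these difficulties largely evaporate: the relevant field is just $\Q(\zeta_N)$, there is no Iwasawa tower, and the eigenspace in play is governed by the classical computations of $K_2$ and ideal class groups of $\Q(\zeta_N)$; it is this special case that one should aim to establish.
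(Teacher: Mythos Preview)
The statement you are trying to prove is labeled a \emph{conjecture} in the paper, and the paper does not prove it. Immediately after stating it the paper remarks that Fukaya and Kato proved it for level $M_r=Mp^r$ with $r>0$, while for levels not divisible by $p$ (in particular the case $M=N$, $r=0$ that you single out at the end) the conjecture is still open. Later (Lemma~\ref{Lem 4.8} and Remark~\ref{Rem 4.6}) the paper records the partial result of Busuioc and Sharifi that the identity holds for $T(2)$ and $T(3)$, and explicitly says further partial results will appear elsewhere. So there is no ``paper's own proof'' to compare against; the theorem on $\tilde b$ in the paper \emph{assumes} this conjecture rather than establishing it.

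Your proposed strategy --- expand $T_\ell[u,v]^*$ via Merel/Heilbronn matrices and collapse the resulting sum of cup products using Steinberg and distribution relations --- is exactly the direct approach that underlies the $T(2)$, $T(3)$ verifications. The obstacle you flag is not a technicality but the whole difficulty: for general $\ell$ no one has managed to organize the Heilbronn terms so that the Steinberg and norm relations visibly cancel everything except $(1+\ell\sigma_\ell)(1-\zeta^u,1-\zeta^v)$. Fukaya and Kato's proof in the $r>0$ case does \emph{not} proceed this way; it goes through the two-variable $p$-adic $L$-function and Beilinson--Kato elements, and that machinery genuinely uses the $p$-power in the level. Your final sentence, that in the $r=0$ case ``these difficulties largely evaporate,'' is therefore the opposite of the actual situation: that is precisely the case that remains open.
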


\begin{remark}
In \cite{FK}, Fukaya and Kato proved these conjectures for the modular curve $X_{1}(M_{r})$ when $r\neq 0$. For modular curves with levels not divisible by $p$, the conjecture is still open.
\end{remark}

Now let's fix an even character
\[\theta: (\Z/Np\Z)^{\times}\rightarrow \Qpbar^{\times},\]
and let $\omega$ be the teichm\"{u}ller character.
Under the following assumptions:
\begin{Ass}
\begin{enumerate}
    \item $p\nmid\varphi(M)$ and $r>0$.
    \item $\theta$ is primitive.
    \item If the restriction of $\theta\omega^{-1}$ to $(\Z/p\Z)^{\times}\subset(\Z/Mp\Z)^{\times}$ is trivial, then the restriction $\theta\omega^{-1}|_{(\Z/M\Z)^{\times}}$ satisfies $\theta\omega^{-1}(p)\neq 1$.
    \item In the case $M=1$, $\theta\neq \omega^2$.
\end{enumerate}
\end{Ass}
Sharifi constructed a map
\[\Upsilon_{\theta}: H^2(\Z[\tfrac{1}{p},\zeta_{M_{r}}],\Zp(2))_{\theta}\rightarrow H_1(X_{1}(M_{r}),\Zp)_{\theta}/I_{\infty}\]
and he made another conjecture:
\begin{conj}\label{Sharificonj}
The map $\varpi_{\theta}$ and $\Upsilon_{\theta}$ are inverse to each other.
\end{conj}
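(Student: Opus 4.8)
The plan is to follow the strategy of Fukaya--Kato and prove the statement in its two-variable form over the Iwasawa algebra $\Lambda$ of the cyclotomic $\Zp$-extension, from which the case of a fixed $r$ follows by specialization. Concretely, I would first recall Ohta's structure theory for the $\theta$-Eisenstein component: under the four clauses of the Assumption, the localized Hecke algebra $\mathfrak{h}_{\theta}$ is a reduced Gorenstein $\Lambda$-algebra, $H_1(X_1(M_{r}),\Zp)_{\theta}$ is free of rank one over $\mathfrak{h}_{\theta}$, the quotient $\mathfrak{h}_{\theta}/I_{\infty}$ is isomorphic to $\Lambda/(\eta_{\theta})$ for a generator $\eta_{\theta}$ related to the relevant branch of the Kubota--Leopoldt/Deligne--Ribet $p$-adic $L$-function, and hence $H_1(X_1(M_{r}),\Zp)_{\theta}/I_{\infty}$ is cyclic over $\mathfrak{h}_{\theta}/I_{\infty}$. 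On the other side, $H^2(\Z[\tfrac{1}{p},\zeta_{M_{r}}],\Zp(2))_{\theta}$ is a finitely generated torsion $\Lambda$-module, and its characteristic ideal is controlled by the Iwasawa Main Conjecture of Mazur--Wiles.

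The first step of the proof proper is to show $\varpi_{\theta}\circ\Upsilon_{\theta}=\mathrm{id}$. Here $\Upsilon_{\theta}$ is constructed from the image, under the \'etale regulator (Soul\'e map), of the Beilinson--Kato zeta element in $K_2$ of the modular curve, landing in $H^2(\Z[\tfrac{1}{p},\zeta_{M_{r}}],\Zp(2))_{\theta}$, while $\varpi_{\theta}$ sends an adjusted Manin symbol $[u,v]^{*}$ to the cup product $(1-\zeta_{M_{r}}^{u},1-\zeta_{M_{r}}^{v})$. The key input is Kato's explicit reciprocity law, which identifies these cyclotomic-unit cup products with images of Steinberg symbols of Siegel units under the regulator; consequently both sides of the composition are governed by one and the same modular zeta element, and on the Eisenstein quotient, where this zeta element is a generator, the composite $\varpi_{\theta}\circ\Upsilon_{\theta}$ is forced to be the identity. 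Passing between $H_1(X_1(M_{r}),\tilde{C}_{\infty},\Zp)$ and $H_1(X_1(M_{r}),\Zp)$, and between $H^2(\Z[\tfrac{1}{Mp},\zeta_{M_{r}}],\Zp(2))$ and $H^2(\Z[\tfrac{1}{p},\zeta_{M_{r}}],\Zp(2))$, one must track the boundary (cusp) and Euler-factor contributions carefully; this is exactly where Conjecture \ref{Con 1.1}, known for $r>0$ by Fukaya--Kato, enters.

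Granting $\varpi_{\theta}\circ\Upsilon_{\theta}=\mathrm{id}$, one obtains a splitting $H_1(X_1(M_{r}),\Zp)_{\theta}/I_{\infty}\cong \Upsilon_{\theta}\bigl(H^2(\Z[\tfrac{1}{p},\zeta_{M_{r}}],\Zp(2))_{\theta}\bigr)\oplus\ker\varpi_{\theta}$, so it remains to prove $\ker\varpi_{\theta}=0$. Comparing characteristic ideals of the two torsion $\Lambda$-modules — that of $H^2(\Z[\tfrac{1}{p},\zeta_{M_{r}}],\Zp(2))_{\theta}$ being generated by the relevant branch of the $p$-adic $L$-function by the Mazur--Wiles Main Conjecture, and that of $H_1/I_{\infty}$ being generated by the same function by Ohta's $\Lambda$-adic Eisenstein-ideal computation — forces $\ker\varpi_{\theta}$ to be pseudo-null. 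But $H_1(X_1(M_{r}),\Zp)_{\theta}/I_{\infty}$ is cyclic over $\mathfrak{h}_{\theta}/I_{\infty}\cong\Lambda/(\eta_{\theta})$, which by the Ferrero--Washington theorem ($\mu=0$) has no nonzero pseudo-null submodule; hence $\ker\varpi_{\theta}=0$, $\Upsilon_{\theta}$ is an isomorphism, and $\varpi_{\theta}=\Upsilon_{\theta}^{-1}$.

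The main obstacle is the explicit reciprocity step: precisely matching, on the Eisenstein component and at the bottom of the cyclotomic tower, the image of the modular zeta element under the Soul\'e/dual-exponential map with the cyclotomic cup product defining $\varpi_{\theta}$, while controlling every cusp- and Euler-factor correction term. A secondary but essential point is that the four clauses of the Assumption (primitivity of $\theta$, the condition on $\theta\omega^{-1}$ at $p$, and $\theta\neq\omega^{2}$ when $M=1$) are exactly what guarantees that $\mathfrak{h}_{\theta}$ is Gorenstein and that the error terms in the reciprocity computation vanish; and the hypothesis $r>0$ is indispensable, since it places us at a level divisible by $p$ where Ohta's theory and the Fukaya--Kato machinery apply — this is precisely why the prime-to-$p$ level situation relevant to $X_{0}(N)$ remains conjectural.
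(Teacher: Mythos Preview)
The statement you are attempting to prove is labeled and treated in the paper as a \emph{conjecture}, not a theorem: the paper gives no proof of it. Immediately after stating Conjecture~\ref{Sharificonj}, the paper only remarks that ``This conjecture has been proved by Fukaya and Kato under certain assumptions. For the details, one can see \cite{FK}.'' There is therefore no ``paper's own proof'' to compare your proposal against.

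What you have written is a high-level sketch of the Fukaya--Kato strategy referenced in that remark. As an outline of \cite{FK} it is in the right spirit: Ohta's Gorenstein/freeness results for the $\theta$-component, the identification of $\mathfrak{h}_{\theta}/I_{\infty}$ with $\Lambda$ modulo a $p$-adic $L$-value, the zeta-element/explicit-reciprocity input to get one composite equal to the identity, and then a characteristic-ideal comparison via the Main Conjecture to kill the kernel. But this is not material that the present paper develops or claims; the paper merely quotes the conjecture and the Fukaya--Kato reference in order to motivate its own analogue, Conjecture~\ref{ourconjecture}, in the prime-to-$p$ level setting of $X_{0}(N)$. In particular, your closing paragraph correctly identifies that the $r>0$ hypothesis is what makes the Fukaya--Kato machinery run, and that the case relevant to this paper lies outside its scope --- which is precisely why the paper states both Conjecture~\ref{Sharificonj} and Conjecture~\ref{ourconjecture} as open.
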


\begin{remark}
This conjecture has been proved by Fukaya and Kato under certain assumptions. For the details, one can see \cite{FK}.
\end{remark}

In our setting, we propose a new conjecture that is analogous to Conjecture \ref{Sharificonj}.
\begin{conj}\label{ourconjecture}
The following two maps are inverse to each other.
\[b: H^2(\Z[\tfrac{1}{p},\zeta_{N}],\Zp(2))_{G}\rightarrow H_1(X_{1}(N),\Zp)_{\Delta}/I_{\infty},\]
\[\varpi_{N,G}:H_1(X_{1}(N),\Zp)_{\Delta}/I_{\infty}\rightarrow H^2(\Z[\tfrac{1}{p},\zeta_{N}],\Zp(2))_{G}.\]
Here $\Delta$ is the automorphism group generated by diamond operators.
\end{conj}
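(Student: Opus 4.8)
\noindent\textbf{Towards Conjecture \ref{ourconjecture}.}
The plan is to follow the strategy by which Fukaya and Kato \cite{FK} proved Conjecture \ref{Sharificonj}, bearing in mind that here $p\mid\varphi(N)$, so we lie outside the range (which asks for $p\nmid\varphi(M)$ and $r>0$) in which their methods, and in particular the cyclotomic Iwasawa main conjecture, are available. We begin with a reduction. Both the source $H^2(\Z[\tfrac1p,\zeta_N],\Zp(2))_G\cong U^{\otimes2}\otimes\mu_q^{-1}$ and the target $H_1(X_1(N),\Zp)_\Delta/I_\infty$ are finite cyclic $\Z/q\Z$-modules of order $q$: the former by the class field theory recalled in the introduction (Proposition \ref{Prop 4.6}), the latter by Mazur's description of the Eisenstein quotient together with the winding isomorphism $e$. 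Since $b$ is already known to be an isomorphism — this is exactly the assertion $\widetilde b\in(\Z/q\Z)^\times$, which follows from \cite{CE} or \cite{WWE} — the task splits into two parts: (i) establish that $\varpi_{N,G}$ is well defined on the Eisenstein quotient $H_1(X_1(N),\Zp)_\Delta/I_\infty$ (the Eisenstein quotient conjecture, Conjecture \ref{Con 1.1}, at level $N$) and is surjective — equivalently, by counting, an isomorphism; and (ii) compute the unit $u\in(\Z/q\Z)^\times$ such that $\varpi_{N,G}\circ b=u\cdot\id$ on $H^2(\Z[\tfrac1p,\zeta_N],\Zp(2))_G$, and show $u=1$. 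For (ii) it suffices to evaluate both maps on a single generator.

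The heart of (ii) is to exhibit both $b$ and $\varpi_{N,G}$ as cup products and then match them. The homomorphism $b$ is the off-diagonal part of the action of $\Gal(\Qbar/K)$ on $H/I^2H$; modulo $I$ this representation is reducible with graded pieces a trivial and a cyclotomic character, and the resulting first-order extension class is — by the deformation theory of the Eisenstein ideal in \cite{WWE} and Sharifi's cup-product formalism \cite{Sha1} — the Kummer class of a cyclotomic unit (a Gauss sum at $N$), paired against the winding element. Under the identifications of the introduction this realizes $b$ as a unit multiple of the map on $U^{\otimes2}\otimes\mu_q^{-1}$ built from cup products of cyclotomic units. On the other side, $\varpi_{N,G}$ comes from the map $\varpi_N^0$ sending an adjusted Manin symbol $[u,v]^*$ (Definition \ref{Def 4.5}) to the cup product $(1-\zeta_N^u,1-\zeta_N^v)$, followed by the natural map $H^2(\Z[\tfrac1{Np},\zeta_N],\Zp(2))\to H^2(\Z[\tfrac1p,\zeta_N],\Zp(2))$; since $1-\zeta_N^a$ is a unit away from $N$ and a uniformizer at the prime above $N$, the image of a Manin symbol in $H^2(\Z[\tfrac1p,\zeta_N],\Zp(2))_G\cong U^{\otimes2}\otimes\mu_q^{-1}$ is computed by the tame symbol at $N$, an explicit bilinear expression in $u$ and $v$. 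After these reductions the conjecture becomes the statement that the two cup-product/tame-symbol expressions coincide without any spurious unit.

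That last comparison is the main obstacle: showing that the cocycle extracted from the modular Galois representation (which defines $b$) agrees on the nose with the cyclotomic cup-product cocycle of $\varpi_{N,G}$. In the $p$-adic family over $X_1(Mp^r)$ this is precisely the core of \cite{FK}, proved there via Beilinson--Kato zeta elements and the main conjecture — inputs we do not have at prime level. I see two possible routes. The first is horizontal: embed $X_1(N)$ into the Hida family $\{X_1(Np^r)\}_{r\geq1}$, transport the Fukaya--Kato theorem down to level $N$ by a control/specialization argument, and handle by a separate eigenspace analysis the fact that the relevant branch character is the one excluded in the standing assumptions. The second, which I expect to be the more feasible in this setting because every module in sight is a $\Z/q\Z$-module, is a direct computation: present $H_1(X_1(N),\Zp)_\Delta/I_\infty$ by Manin symbols subject to the Eisenstein and diamond relations, fix a generator, push it through $\varpi_{N,G}$ and then $b$ (or argue with $b\circ\varpi_{N,G}$ instead), and reduce $u=1$ to an explicit Stickelberger/Gauss-sum congruence modulo $q$ — the same circle of ideas that governs Lecouturier's higher Eisenstein elements, to which $\widetilde b\widetilde c$ is linked \cite[Section 6.6]{Lec2}. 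In either approach the decisive and delicate step is a local reciprocity computation at the prime $N$, identifying the Galois-cohomological class underlying $b$ with the tame symbol of cyclotomic units; once $u=1$ is established, combining it with the value $\widetilde c=1$ proved above pins down the invariant $\widetilde b$.
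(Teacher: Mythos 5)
The statement you are addressing is stated in the paper only as a conjecture (it is the prime-level, $p\mid N-1$ analogue of Sharifi's Conjecture \ref{Sharificonj}), and the paper offers no proof of it: every later result that uses it, notably Theorem \ref{computationb}, is explicitly conditional on Conjecture \ref{Con 4.1} and Conjecture \ref{Con 4.2}. Your text is likewise a program rather than a proof, and as written it does not close either of the two points it isolates. For your point (i), the well-definedness of $\varpi_{N,G}$ on the quotient by $I_{\infty}$ is exactly Conjecture \ref{Con 4.1} at level $N$; the paper records (Remark \ref{Rem 4.6}) that Fukaya--Kato prove this only for levels $Mp^{r}$ with $r>0$ and that it is open at level prime to $p$, the only unconditional input being the Busuioc--Sharifi relations for $T(2)$ and $T(3)$ (Lemma \ref{Lem 4.8}). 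Your reduction of (i) to ``surjective plus counting'' is fine once well-definedness is granted, but well-definedness is the hard part and you do not supply it.

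For your point (ii), the assertion that the extension class $b$ is, up to a unit, the cup-product map built from cyclotomic units is precisely the content of the conjecture; it does not follow from ``the deformation theory of the Eisenstein ideal in \cite{WWE} and Sharifi's cup-product formalism.'' What is known a priori about $\chi_{b}$ is only that it factors through $\Gal(F_{-1}/K)\cong U^{\otimes 2}\otimes\mu_{q}^{-1}$, which by Proposition \ref{Prop 4.6} is a quotient of the $\kappa^{-1}$-eigenspace of a class group; no Kummer-class or Gauss-sum description of it is established anywhere, and unlike the class $c$ --- which the paper does compute unconditionally via the generalized Jacobian (Theorem \ref{Cor 4.2}) because $F_{1}=K(N^{1/q})$ is a Kummer extension --- the class $b$ lives in a genuinely class-group eigenspace where no such elementary geometric computation is available. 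The two routes you then sketch (specialization from the Hida family, where the relevant branch character is excluded by the standing assumptions of \cite{FK} and where the Beilinson--Kato/main-conjecture inputs are unavailable; or a direct Manin-symbol/Stickelberger computation) are left entirely unexecuted, and the ``decisive and delicate step'' you yourself name --- the reciprocity comparison at $N$ identifying the Galois cocycle with the tame symbol of cyclotomic units --- is the open problem itself. In short: this is a reasonable research plan consistent with the paper's framework, but it is not a proof and should not be presented as one.
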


\begin{remarks}
Our setting is different from the setting in \cite{Sha1} in the following ways:
\begin{enumerate}
    \item The level $N$ is not divisible by $p$. Moreover, $N\equiv 1\bmod p$.
    \item Our tame character is the trivial character since we are essentially considering the modular curve $X_{0}(N)$.
    \item In the work of Fukaya, Kato and Sharifi, their $H^2(\Z[\tfrac{1}{p},\zeta_{M_{r}}],\Zp(2))_{\theta}$ is related to unramified extension of cyclotomic field. In our case, $H^2(\Z[\tfrac{1}{p},\zeta_{N}],\Zp(2))_{G}$ is related to the tame ramified extension $F_{-1}/K$.
\end{enumerate}
\end{remarks}

Using Mazur's results and duality, we can relates the homology of $X_1(N)$ with the cohomology of $X_{0}(N)$. Our result for $\widetilde{b}$ is:
\begin{thm}
Assume Conjecture \ref{Con 1.1} for $X_1(N)$ and Conjecture \ref{ourconjecture}, the invariant $\widetilde{b}$ is $1$.
\end{thm}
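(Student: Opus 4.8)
The plan is to deduce the value of $\widetilde b$ from the value of $\widetilde c$ together with Conjecture \ref{ourconjecture}, which presents $b$ as the inverse of Sharifi's cup-product map. The first step is to make precise the identification invoked just before the statement: pulling back along $\pi:X_1(N)\to X_0(N)$ and combining Mazur's comparison theorem (in the form $H_1(X_1(N),\Zp)^+_{\Delta}/I_\infty\cong IH^-(1)/I^2H^-(1)$) with Poincar\'e duality on $X_0(N)$, one identifies $H_1(X_1(N),\Zp)_{\Delta}/I_\infty$ with $IH^-/I^2H^-$ up to a Tate twist, in such a way that the map $b$ of Conjecture \ref{ourconjecture} is carried to the map $b$ of the introduction. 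Keeping this twist straight is already delicate and accounts for much of the bookkeeping below.

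Next I would unwind the definition of $\widetilde b$. By construction $\widetilde b\in(\Z/q\Z)^\times$ is the scalar by which the composite
\[
H^2(\Z[\tfrac{1}{p},\zeta_N],\Zp(2))_G\ \xrightarrow{\ \sim\ }\ U^{\otimes 2}\otimes\mu_q^{-1}\ \xrightarrow{\,e^{-1}\,}\ I^2/I^3\otimes\mu_q^{-1}\ \xrightarrow{\ b\ }\ I^2/I^3\otimes\mu_q^{-1}
\]
acts, where the first arrow is the class-field-theoretic isomorphism of Proposition \ref{Prop 4.6} and the second comes from the winding isomorphism $e$. Granting Conjecture \ref{ourconjecture}, $b$ is an isomorphism with inverse $\varpi_{N,G}$; hence $\widetilde b$ is the reciprocal in $\Z/q\Z$ of the scalar by which $\varpi_{N,G}$ acts once its source and target are identified with $I^2/I^3\otimes\mu_q^{-1}$ through the same chain of isomorphisms. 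It thus suffices to prove that $\varpi_{N,G}$, after these identifications, is multiplication by $1$.

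To do this I would invoke Conjecture \ref{Con 1.1} for $X_1(N)$: this is exactly what is needed for $\varpi_{N,G}$ to be well defined on the Eisenstein quotient and to be computed on adjusted Manin symbols by $[u,v]^{*}\mapsto(1-\zeta_N^u,1-\zeta_N^v)$. One then evaluates $\varpi_{N,G}$ on the image of the winding element, which generates the relevant cyclic group; its value is a cup product of cyclotomic units $1-\zeta_N^a$, and under the reciprocity map at the prime above $N$ — the tame symbol $\F_N^\times\times\F_N^\times\to\mu_q$ composed with the identifications behind Proposition \ref{Prop 4.6} — this cup product is exactly the image of a generator of $U=\F_N^\times/(\F_N^\times)^q$ tensored with itself. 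Since the winding isomorphism $e$ is built from the very same cyclotomic-unit and modular-symbol data that feed $\varpi_{N,G}$ and Mazur's construction, the generator produced on the $H^2$-side matches the one produced on the $I^2/I^3$-side, up to a sign which is checked to be $+1$ by the same normalization conventions that give $\widetilde a=-1$ and $\widetilde c=1$. Hence $\varpi_{N,G}$ is multiplication by $1$ after these identifications, and $\widetilde b=1$.

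The step I expect to be the main obstacle is precisely this final comparison of normalizations: one must carry every Tate twist by $\mu_q^{\pm1}$ and the entire $\Gal(\Q(\zeta_q)/\Q)$-action through four independent constructions — Mazur's comparison and Poincar\'e duality relating $H_1(X_1(N))_\Delta$ to $H^1(X_0(N))$, the winding isomorphism $e$, Sharifi's normalization of $\varpi$ on Manin symbols, and the class-field-theoretic isomorphism of Proposition \ref{Prop 4.6} — and check that no spurious unit of $\Z/q\Z$ survives. As a consistency check, \cite[section 6.6]{Lec2} relates the product $\widetilde b\,\widetilde c$ to one of Lecouturier's higher Eisenstein elements, so that $\widetilde b=1$ is compatible with $\widetilde c=1$.
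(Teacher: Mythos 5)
Your overall strategy coincides with the paper's: use Conjecture \ref{ourconjecture} to replace $b$ by its conjectural inverse $\varpi_{N,G}$, so that $\widetilde b$ becomes the reciprocal of the scalar by which $\varpi_{N,G}$ acts after the identifications with $U^{\otimes 2}\otimes\mu_q^{-1}$, and then compute that scalar on Manin symbols using Conjecture \ref{Con 1.1} and the tame symbol at $N$. The identifications in your first paragraph are exactly those the paper sets up in the diagram preceding Remark \ref{Rem 4.7}.

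Where your argument falls short of a proof is the step you yourself flag as the main obstacle: you assert that the generator produced on the $H^2$-side matches the one on the modular-symbol side ``up to a sign which is checked to be $+1$ by the same normalization conventions that give $\widetilde a=-1$ and $\widetilde c=1$.'' That matching is precisely the content of the theorem, and an appeal to normalization conventions does not establish it; a priori the scalar could be any unit of $\Z/q\Z$, not merely $\pm1$, and the computations of $\widetilde a$ and $\widetilde c$ live in different eigenspaces and do not transfer. The paper pins the scalar down by an explicit computation with the elements $(l+\langle l\rangle-T(l))[u,v]^{*}$: their image in $IH_1(X_0(N),\Zp)^+/I^2H_1(X_0(N),\Zp)^+\cong I/I^2\otimes U$ is $\eta_l\otimes\frac{u}{v}$ (Lemma \ref{Lem 4.9}); on the other hand, since $l+\langle l\rangle-T(l)\equiv(l-1)(1-\langle l\rangle)\bmod \mathfrak{I}_\infty$, Conjecture \ref{Con 4.1} gives $\varpi_N\bigl((l+\langle l\rangle-T(l))[u,v]^{*}\bigr)=(l-1)(1-\sigma_l^{-1})\varpi_N^0([u,v]^{*})$, which under the identification of Corollary \ref{new4} together with Lemma \ref{Lem 4.10} equals $l^{l-1}\otimes\frac{u}{v}$ (Lemma \ref{Lem 4.11}). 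Mazur's formula $\Tilde{e}^+(\eta_l)=(l-1)\phi(\bar l)$ (Theorem \ref{mazur}(5)) then says the winding isomorphism identifies $\eta_l$ with $l^{l-1}$, whence the scalar is exactly $1$. You need to supply this computation (or an equivalent one) to close the argument; everything else in your outline is sound.
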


In Sections 2 and 3, we review Mazur's work on $X_{0}(N)$. In Section 4, we define the extension classes that we are interested in. In Section 5 and 6, we investigate these classes carefully.

\begin{nota}
Let $N\geq 5$ and $p\geq 5$ be prime numbers such that $N\equiv 1\bmod p$. We fix an embedding $\overline{\Q}\rightarrow\C$ and an embedding $\overline{\Q}\rightarrow\Qpbar$.

\npr For $n\geq 1$, let $\zeta_{n}=e^{\frac{2\pi i}{n}}\in\Qbar\subset\C$ and $\zeta_{n}^+=\zeta_{n}+\zeta_{n}^{-1}$.
\end{nota}

\begin{thank}
The results of this paper are a part of the author's Ph.D. thesis in University of
Arizona. The author would like to thank his advisor, Prof. Romyar Sharifi, for his guidance, support, and
suggesting this problem. The author was partially supported by National Science Foundation under Grants
No. DMS-1360583 and No. DMS-1401122.
Also, the
author would like to thank Emmanuel Lecouturier, Preston Wake and Sujatha Ramdorai for helpful
suggestions during preparation of this article.
\end{thank}

\section{Mazur's results on the homology of $X_{0}(N)$}
For general theory of modular curves, see \cite{DI}, \cite{DR}, \cite{G}, \cite{KM}, \cite{Oh2}.
\begin{defn}\label{Def 2.1}
Let $\mathcal{Y}_{1}(N)$ be the $\Z[1/N]$-scheme that represents the functor taking a $\Z[1/N]$-scheme $S$ to the set of pairs $(E,\alpha)$, where $E$ is a elliptic curve over $S$ and $\alpha$ is a closed immersion $\Z/N\Z\rightarrow E$ of $S$-group schemes. Let $Y_1(N):=\mathcal{Y}_{1}(N)\otimes{\Q}$. 
Let $\mathcal{X}_{1}(N)$ be the $\Z[1/N]$-scheme representing the functor taking a $\Z[1/N]$-scheme $S$ to the set of pairs $(E,\alpha)$, where $E$ is a generalized elliptic curve over $S$ and $\alpha$ is a closed immersion $\Z/N\Z\rightarrow E^{reg}$ of $S$-group schemes. Here $E^{reg}$ denotes the smooth locus of $E/S$. Let $X_{1}(N):=\mathcal{X}_{1}(N)\otimes \Q$.
\end{defn}

\begin{defn}\label{Def 2.4}
Let $\mathcal{Y}_{0}(N)$ be the coarse moduli scheme over $\Z[\tfrac{1}{N}]$ classifying the pairs $(E,C)$, where $E$ is a elliptic curve over a $\Z[\tfrac{1}{N}]$-scheme $S$ and $C\subset E[N]$ is a finite flat group scheme of order $N$ that is locally free and of rank $N$. Let $Y_{0}(N):=\mathcal{Y}_{0}(N)\otimes{\Q}$. 
Let $\mathcal{X}_{0}(N)$ be the coarse moduli scheme over $\Z[\tfrac{1}{N}]$ classifying generalized elliptic
curves together with a locally free subgroup scheme of order $N$ in place of $\alpha$ in Definition \ref{Def 2.1}. Let $X_{0}(N):=\mathcal{X}_{0}(N)\otimes{\Q}$.
\end{defn}

Let $\mathfrak{h}_{0}(N)$ be the subring of $\End_{\Zp}(H_1(X_{0}(N)(\C)\Zp))$ generated by $T(n)\,(n\geq 1, n\nmid N)$ and the Atkin-Lehner involution $W_{N}$. Let $I$ be the ideal of $\mathfrak{h}_{0}(N)$ that is generated by $\{T(l)-1-l,W_{N}+1\}$ for all prime $l\neq N$.

Let $H_1:=H_{1}(X_{0}(N)(\C),\Zp)$. Let $H_1^{\pm}$ be the eigensubspaces of complex conjugation.

\begin{defn}\label{Def 3.2}
Let 
\[e^{+}:I\rightarrow H_1^+\]
be the $\mathfrak{h}_{0}(N)$-morphism such that
\[x\mapsto x\{0,\infty\}_{DM},
\]
where $\{0,\infty\}_{DM}$ is the image of $\{0,\infty\}$ under the Drinfeld-Manin splitting map:
\[H_1(X_{0}(N)(\C),\textnormal{cusps},\Qp)\rightarrow H_1(X_{0}(N)(\C),\Qp).\]
We call this map the winding homomorphism.
\end{defn}

\begin{remark}\label{Rem 3.4}
These maps are well defined since
\[
IH_{1}(Y_{0}(N)(\C),\textnormal{cusps},\Zp)\subset H_{1}(X_{0}(N)(\C),\Zp).
\]
\end{remark}

\begin{defn}[Shimura subgroup]\label{Def 3.3}
The Shimura covering is the maximal unramified subcover intermediate to $X_{1}(N)\rightarrow X_{0}(N)$  whose covering group is the unique quotient group of $(\Z/N\Z)^{\times}/\{\pm 1\}$. From the natural map $X_{1}(N)\rightarrow X_{0}(N)$, we obtain a Picard morphism $\textnormal{Jac}(X_{0}(N))\rightarrow \textnormal{Jac}(X_{1}(N))$. We define the Shimura subgroup $\Sigma$ to be the kernel of this map on $\Qbar$-points.
\end{defn}

In \cite{Ma2}, Mazur proved the following theorem about the structure of the Hecke algebra and the winding homomorphism.
\begin{thm}[Mazur]\label{mazur}
\begin{enumerate}
	\item There is an isomorphism:
	\[\mathfrak{h}_{0}(N)/I\rightarrow \Zp/(\xi),
	\]
	\[T(l)\mapsto 1+l,
	\]
	where $\xi=\tfrac{N-1}{12}$.
	\item There is a canonical isomorphism \[\phi:(\Z/N\Z)^{\times}\otimes\Zp\rightarrow H_1^{+}/IH_1^{+},\]which identifies $H_1^{+}/IH_1^{+}$ with the Galois group of $q$-subcover of the Shimura covering.
	\item Let $a$ and $b$ be coprime integers with $b$ relatively prime to $N$. Let $\bar{b}$ denote the image of $b$ in $(\Z/N\Z)^{\times}$. Let $\Phi(a/b)\in H^+/IH^+$ denote the image of the modular symbol $\{0,a/b\}$ in $H_1^+/IH_1^+$. Then
	\[\Phi(a/b)=\phi({\bar{b}}^{-1}).\]
	\item Let $\mathfrak{P}$ be the ideal $(p,I)$ of $\mathfrak{h}_{0}(N)$, the winding homomorphism $e^+: I_{\mathfrak{P}}\rightarrow H_{1,\mathfrak{P}}^+$ is an isomorphism of Hecke modules.
	\item Let $\eta_{l}=1+l-T(l)$ and $\Tilde{e}^+:I/I^2\rightarrow H_1^+/IH_1^+$ be the homomorphism induced from $e^+$. Then:
	\[
	\Tilde{e}^+(\eta_{l})=(l-1)\phi(\bar{l}),
	\]
	where $l$ is any prime number different from $N$.
	\item The Hecke algebra $\mathfrak{h}_{0}(N)_{\mathfrak{P}}$ is Gorenstein.
\end{enumerate}
\end{thm}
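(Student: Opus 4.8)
The statement collects several results of Mazur from \cite{Ma1} and the manuscript \cite{Ma2}, so the plan is to isolate the genuine inputs --- parts (1), (2) and (6) --- and to read off parts (3), (4), (5) from them by bookkeeping with modular symbols.

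For (1) I would begin with the weight-two Eisenstein series of level $\Gamma_{0}(N)$, whose $\ell$-th Hecke eigenvalue is $1+\ell$ and whose $W_{N}$-eigenvalue is $-1$; this produces a surjective ring homomorphism $\mathfrak{h}_{0}(N)\to\Zp$ with kernel $I$, so $\mathfrak{h}_{0}(N)/I$ is a cyclic $\Zp$-algebra. Its order is the $p$-part of the numerator of $(N-1)/12$, which Mazur computes via the order of the cuspidal divisor class $(0)-(\infty)$ in $J_{0}(N)(\Q)$ (equivalently, via the constant terms of the Eisenstein series at the two cusps); since $p\ge 5$ makes $12$ a unit, this gives $\mathfrak{h}_{0}(N)/I\cong\Zp/(\xi)=\Z/q\Z$. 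For (2) I would invoke the Shimura subgroup $\Sigma\subset J_{0}(N)$ and its $q$-quotient: Mazur identifies this $q$-quotient as a finite group of multiplicative type whose character group is the quotient of $(\Z/N\Z)^{\times}\otimes\Zp$ of order $q$, and shows that $H_{1}^{+}/IH_{1}^{+}$ is $\Gal(\Qbar/\Q)$-canonically identified with the Galois group of the $q$-subcover of the Shimura covering --- this is the ``$+\leftrightarrow$ Shimura'' half of the dichotomy underlying the decomposition theorem of Mazur quoted above. Composing with the tautological parametrization of that Galois group by $(\Z/N\Z)^{\times}\otimes\Zp$ gives $\phi$.

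Part (3) is a monodromy computation at the cusps: since the $q$-subcover of the Shimura covering is everywhere unramified, the image in $X_{0}(N)$ of the path $\{0,a/b\}$ has a well-defined class in $H_{1}(X_{0}(N)(\C),\Zp)$, and its image under $\phi^{-1}$ is the covering-group monodromy along that path, which by the standard description of the cusps of $X_{1}(N)$ over those of $X_{0}(N)$ and the action of $\Gamma_{0}(N)/\Gamma_{1}(N)\cong(\Z/N\Z)^{\times}/\{\pm 1\}$ depends only on $\bar b$, giving $\Phi(a/b)=\phi(\bar b^{-1})$ with the normalization of $\phi$ fixed in (2). Granting this, part (5) is formal: $\eta_{\ell}=1+\ell-T(\ell)$ annihilates the cuspidal (boundary) part, so $e^{+}(\eta_{\ell})=\eta_{\ell}\{0,\infty\}_{DM}=\eta_{\ell}\{0,\infty\}$ exactly, and expanding $T(\ell)$ in coset representatives gives $\eta_{\ell}\{0,\infty\}=\sum_{a=0}^{\ell-1}\{0,a/\ell\}$, whose $a=0$ term vanishes while the remaining $\ell-1$ terms all have the same image by (3), so $\widetilde{e}^{+}(\eta_{\ell})=(\ell-1)\phi(\bar\ell)$ after matching Mazur's normalization. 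For (4), surjectivity of $e^{+}$ after localizing at $\mathfrak{P}$ is part of Mazur's analysis of the winding element (that $I_{\mathfrak{P}}\{0,\infty\}_{DM}=H_{1,\mathfrak{P}}^{+}$), and injectivity follows from (6): $\mathfrak{h}_{0}(N)_{\mathfrak{P}}$ Gorenstein, together with multiplicity one, forces $H_{1,\mathfrak{P}}^{+}$ to be free of rank one over $\mathfrak{h}_{0}(N)_{\mathfrak{P}}$, and $I_{\mathfrak{P}}$ has generic rank one as well, so a surjection $I_{\mathfrak{P}}\twoheadrightarrow H_{1,\mathfrak{P}}^{+}$ of such modules is an isomorphism.

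The main obstacle is (6), the Gorenstein property of $\mathfrak{h}_{0}(N)_{\mathfrak{P}}$ at the Eisenstein prime. I would prove it, following \cite{Ma1}, by showing $\dim_{\Fp}J_{0}(N)[\mathfrak{P}]=2$, which is equivalent to Gorenstein-ness: the lower bound is witnessed by the two evident lines, the Shimura line (of multiplicative type) and the cuspidal line (\'etale), and the upper bound comes from a ramification argument --- $J_{0}(N)$ has good reduction away from $N$ and semistable reduction at $N$, so $J_{0}(N)[\mathfrak{P}]$ is a finite flat group scheme over $\Z[1/N]$ with only tame, tightly controlled ramification at $N$, and the classification of such group schemes, together with the splitting under complex conjugation, excludes a third dimension. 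Everything else in the theorem reduces to this input and to (1)--(2).
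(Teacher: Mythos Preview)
The paper does not supply its own proof of this theorem: it is stated as a result of Mazur and attributed to \cite{Ma1} and the unpublished manuscript \cite{Ma2}, with no argument given in the text. Your proposal therefore goes beyond the paper, and what you have written is a reasonable outline of Mazur's original arguments rather than something to be compared against a proof in this paper.

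On the substance: your identification of (1), (2), (6) as the genuine inputs is correct and your sketches for them track Mazur's methods in \cite{Ma1}. The deduction of (5) from (3) via the coset expansion of $T_{\ell}$ is right, though the raw computation produces $(\ell-1)\phi(\bar\ell^{-1})$ rather than $(\ell-1)\phi(\bar\ell)$; the phrase ``after matching Mazur's normalization'' is hiding a genuine sign that depends on whether one uses $T_{\ell}$ or $T_{\ell}^{*}$ on homology, and you should make that explicit rather than absorb it. For (4), your injectivity argument works once you observe that $\mathfrak{h}_{0}(N)_{\mathfrak{P}}$ is reduced (it acts faithfully on a space of modular forms), so the kernel of a surjection from an ideal onto a free rank-one module is torsion and hence zero; alternatively one can use directly that $I_{\mathfrak{P}}$ is principal, which Mazur also proves. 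Your sketch of (6) via $\dim_{\Fp}J_{0}(N)[\mathfrak{P}]=2$ is the correct route.
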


\section{Structure of cohomology groups modulo Eisenstein ideal}
In this section, we state the results about the structure of cohomology groups modulo Eisenstein ideal, the proofs are just in the same way as in \cite[Section 6]{FK}.

\begin{nota}\label{Not 3.3}
Let $H^1(X)=H_{\textnormal{\'et}}^1(X_{0}(N))$ and $H^1(Y)=H_{\textnormal{\'et}}^1(Y_{0}(N))$.
\end{nota}

\begin{remark}
We have an isomorphism from Poincar\'e duality
\[H^1(X)(1)\cong H_1(X_{0}(N),\Zp),\]
which respects complex conjugation. We use these isomorphisms to transfer from homology groups to cohomology groups.
\end{remark}
 
Via duality, we have the following exact sequence of $\mathfrak{h}_{0}(N)[\Gal(\overline{\Q}/\Q)]$-modules:
\begin{equation}\label{Ses 3.1}
0\rightarrow H^1(X)(1)\rightarrow H^1(Y)(1)\rightarrow \Zp \rightarrow 0,\,\,\,\,
\end{equation}




\begin{lemma}\label{Lem 3.2}
We have
\[
H^1(Y)_{DM,\mathfrak{P}}/H^1(X)_{\mathfrak{P}}\cong \mathfrak{h}_{0}(N)/I
\]
\end{lemma}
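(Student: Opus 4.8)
The plan is to read the statement off the short exact sequence $\eqref{Ses 3.1}$ together with the explicit description of the Drinfeld--Manin splitting in Definition~\ref{Def 3.2}, exactly as in \cite[Section 6]{FK}. Throughout one works with the $\mathfrak h_0(N)$-lattice $H^1(Y)_{DM}(1)$ lying between $H^1(X)(1)$ and $H^1(X)(1)\otimes\Qp$: by its construction from the Drinfeld--Manin splitting it is generated over $H^1(X)(1)$ by the Hecke-orbit of the modular symbol $\{0,\infty\}_{DM}$, so that $H^1(Y)_{DM}(1)/H^1(X)(1)$ is a cyclic $\mathfrak h_0(N)$-module generated by the class of $\{0,\infty\}_{DM}$.

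First I would produce the comparison map. By Remark~\ref{Rem 3.4} we have $I\cdot\{0,\infty\}_{DM}\subset H^1(X)(1)$, so the class of $\{0,\infty\}_{DM}$ in the quotient is annihilated by $I$; hence there is a canonical surjection of $\mathfrak h_0(N)$-modules $\mathfrak h_0(N)/I\twoheadrightarrow H^1(Y)_{DM}(1)/H^1(X)(1)$ sending $1$ to this class. Since $I\subset\mathfrak P$, after localizing at $\mathfrak P$ the source is unchanged, $\mathfrak h_0(N)_{\mathfrak P}/I_{\mathfrak P}\cong\mathfrak h_0(N)/I$, and it remains to see that the localized surjection $\mathfrak h_0(N)/I\twoheadrightarrow H^1(Y)_{DM,\mathfrak P}/H^1(X)_{\mathfrak P}$ is injective, which I would check by a length count over $\Zp$.

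For the length count: by Mazur's Theorem~\ref{mazur}(1) the source is $\Zp/(\xi)$ with $\xi=\tfrac{N-1}{12}$, of length $v_p(N-1)$ (using $p\geq 5$). On the target side, the $\Zp$-submodule generated by the image of $\{0,\infty\}_{DM}$ already has $\Zp$-length equal to the $p$-adic valuation of the denominator of $\{0,\infty\}_{DM}$ relative to $H^1(X)(1)$; via the winding isomorphism $e^+\colon I_{\mathfrak P}\xrightarrow{\sim}H^+_{1,\mathfrak P}$ of Theorem~\ref{mazur}(4), together with the Gorenstein property Theorem~\ref{mazur}(6), this denominator is exactly $\xi$ at $\mathfrak P$, so the target has length at least $v_p(N-1)$. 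Being a quotient of $\mathfrak h_0(N)/I$ it also has length at most $v_p(N-1)$, so the surjection is a map of finite $\Zp$-modules of equal length and hence an isomorphism, giving $H^1(Y)_{DM,\mathfrak P}/H^1(X)_{\mathfrak P}\cong\mathfrak h_0(N)/I$.

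The main obstacle is the integrality input in the third step: one must know that the Eisenstein contribution that has to be cleared in order to make $\{0,\infty\}_{DM}$ integral is exactly the full $\xi$ — equivalently, that the cokernel of the winding homomorphism is the whole of $\mathfrak h_0(N)/I$ and not a proper quotient. This is precisely Mazur's computation of the order of the Eisenstein (cuspidal) quotient, localized at $\mathfrak P$, and is the only non-formal point; the rest is bookkeeping with $\eqref{Ses 3.1}$ and the Drinfeld--Manin splitting.
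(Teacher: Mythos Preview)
Your proposal is correct and follows essentially the same line as the paper: both arguments observe that the quotient is cyclic over $\mathfrak h_0(N)$, generated by the class of $\{0,\infty\}_{DM}$, and then identify the annihilator as $I$ by invoking Mazur's determination of the order of the cuspidal class $(0)-(\infty)$ as $\xi$. The paper states this last step in one line (``the fact that $(0)-(\infty)$ has order $\xi$''), whereas you route it through the winding isomorphism Theorem~\ref{mazur}(4) and the Gorenstein property Theorem~\ref{mazur}(6); this works but is more machinery than necessary, since the exact denominator of $\{0,\infty\}_{DM}$ is precisely the $p$-part of the order of $(0)-(\infty)$ in the Jacobian, and that is already Theorem~\ref{mazur}(1) together with the classical Ogg--Mazur computation.
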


\begin{proof}
It is easy to see that $H^1(Y)_{DM,\mathfrak{P}}/H^1(X)_{\mathfrak{P}}$ is generated by the image of $\{0,\infty\}_{DM,\mathfrak{P}}$. The isomorphism
\[
H^1(Y)_{DM,\mathfrak{P}}/H^1(X)_{\mathfrak{P}}\cong \mathfrak{h}_{0}(N)/I
\]
follows from the fact that $(0)-(\infty)$ has order $\xi$.
\end{proof}

\begin{defn}\label{Def 3.7}
We define a $\Zp$-linear map from $H^1(X)/IH^1(X)$ to $\Zp/(\xi)$ as follows.
Since $\xi\{0,\infty\}_{DM,\mathfrak{P}}\in H^1(X)_{\mathfrak{P}}$, we get a map $f$:
\[H^1(X)_{\mathfrak{P}}\xrightarrow{f} \Zp/\xi\Zp\]
\[x\mapsto (x,W_{N}\xi\{0,\infty\}_{DM,\mathfrak{P}}),\]
where $(,)$ is the cup product of \'etale cohomology group.
Since $\mathfrak{h}_{0}(N)/I\cong \Zp/\xi\Zp$, the above map factors through $H^1(X)/IH^1(X)$.
\end{defn}

\begin{thm}[Fukaya, Kato]\label{FKsplit}
We have an exact sequence of $\mathfrak{h}_{0}(N)[\Gal(\overline{\Q}/\Q)]$-modules:
\begin{equation}\label{Ses 3.2}
    0\rightarrow H^1(X)^{-}/IH^1(X)^{-}\rightarrow H^1(X)/IH^{1}(X)\xrightarrow{f} \Zp/\xi\rightarrow 0
\end{equation}
satisfying the following condition:
\begin{enumerate}
	\item The action of $\Gal(\Qbar/\Q)$ on $ H^1(X)^{-}/IH^1(X)^{-}$ is given by $\kappa^{-1}$, where $\kappa$ is the cyclotomic character.
	\item The action of $\Gal(\Qbar/\Q)$ on $ \Zp/\xi$ is trivial.
\end{enumerate}
\end{thm}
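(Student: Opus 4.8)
The plan is to extract the exact sequence \eqref{Ses 3.2} from the known exact sequence \eqref{Ses 3.1} by passing to the quotient modulo the Eisenstein ideal and analyzing the resulting maps. First I would tensor \eqref{Ses 3.1} with $\mathfrak{h}_0(N)/I$ over $\mathfrak{h}_0(N)$; since $\Zp$ (with trivial Hecke action, on which $T(l)$ acts as $1+l$) is already a $\mathfrak{h}_0(N)/I$-module, the right-hand term becomes $\Zp/\xi$ by Theorem~\ref{mazur}(1), and the map $f$ of Definition~\ref{Def 3.7} realizes the surjection $H^1(X)/IH^1(X)\to\Zp/\xi$. The key point is to identify the kernel of $f$ with $H^1(X)^-/IH^1(X)^-$. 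For this I would first localize at $\mathfrak P=(p,I)$ (Theorem~\ref{mazur}(4),(6) make $H^1(X)_{\mathfrak P}$ well-behaved and Gorenstein, so the cup-product pairing is perfect there), run the argument for the localization, and then note that away from $\mathfrak P$ the Eisenstein ideal is the unit ideal so everything vanishes; hence the sequence of localizations at $\mathfrak P$ is the sequence globally. Over $\mathfrak h_0(N)_{\mathfrak P}$, the winding homomorphism $e^+\colon I_{\mathfrak P}\xrightarrow{\sim}H^+_{1,\mathfrak P}$ together with Lemma~\ref{Lem 3.2} shows that $H^1(X)^+/IH^1(X)^+\cong I/I^2$ maps isomorphically under $f$ onto $\Zp/\xi$ — concretely, $f$ restricted to the $+$ part is, via $\tilde e^+$ and the formula $\tilde e^+(\eta_l)=(l-1)\phi(\bar l)$ of Theorem~\ref{mazur}(5) together with the self-duality of $H^1(X)_{\mathfrak P}$, an isomorphism onto $\mathfrak h_0(N)/I\cong\Zp/\xi$. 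Since $H^1(X)/IH^1(X)=H^1(X)^+/IH^1(X)^+\oplus H^1(X)^-/IH^1(X)^-$ (here I use Mazur's $\pm$-decomposition, valid since $p$ is odd so $\frac{1\pm c}{2}$ are idempotents), the kernel of $f$ is exactly $H^1(X)^-/IH^1(X)^-$, giving \eqref{Ses 3.2}.

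For the Galois-action statements, on $\Zp/\xi$ the action is trivial because this quotient is $H^1(Y)_{DM,\mathfrak P}/H^1(X)_{\mathfrak P}$ in Lemma~\ref{Lem 3.2}, which is generated by (the image of) the cusp class $\{0,\infty\}_{DM}$, and the cuspidal divisor $(0)-(\infty)$ is rational over $\Q$; equivalently this follows by twisting the $\Zp(1)$-coefficient version and noting the boundary $\Zp$ in \eqref{Ses 3.1} carries the trivial action. For condition (1), I would argue that $H^1(X)^-/IH^1(X)^-$ as a $\Gal(\Qbar/\Q)$-module is $\kappa^{-1}$-isotypic: this is essentially Mazur's theorem (the first displayed theorem of the introduction) identifying the $-$ part with a twist; more precisely, since $H^1(X)^-$ is, after $\otimes\mathfrak h_0(N)/I$, free of rank one over $\mathfrak h_0(N)/I$ (again by Gorenstein-ness and the duality pairing with $H^1(X)^+$), and since the determinant of the Galois representation on $H^1(X)(1)$ is $\kappa^{-1}$ while the $+$ part is Eisenstein-trivial modulo $I$, the $-$ part modulo $I$ must be $\kappa^{-1}$.

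The main obstacle I anticipate is the precise bookkeeping of Tate twists and the verification that $f$ (defined via the cup product with $W_N\xi\{0,\infty\}_{DM}$) really does induce an isomorphism on the $+$ part — this rests on the perfectness of the cup-product pairing on $H^1(X)_{\mathfrak P}$, which in turn uses Theorem~\ref{mazur}(6) (Gorenstein), and on matching the Hecke-module structure through $e^+$ and $\tilde e^+$. Everything else is a formal consequence of Mazur's theorem and the localization trick, so the heart of the argument is this compatibility of the cup-product map with the winding homomorphism; this is exactly the computation carried out in \cite[Section 6]{FK} in the $X_1$ setting, which we are transporting to $X_0(N)$.
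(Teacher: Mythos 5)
The paper does not actually prove this theorem: its ``proof'' is a one-line citation to \cite[Section 6]{FK}, so your proposal is supplying an argument where the text supplies none. What you reconstruct is essentially the standard Mazur/Fukaya--Kato argument (localize at $\mathfrak{P}$, use Gorenstein-ness and the winding isomorphism to see that $f$ is surjective with kernel the minus part, then get the Galois actions from rationality of the cusps and from the determinant being $\kappa^{-1}$), and the overall shape is sound. Two points need repair, one cosmetic and one substantive. Cosmetically: tensoring (\ref{Ses 3.1}) with $\mathfrak{h}_{0}(N)/I$ does not turn the right-hand $\Zp$ into $\Zp/\xi$ --- that copy of $\Zp$ carries the Eisenstein action, so $I$ already annihilates it and the tensor product is still $\Zp$. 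The $\xi$ enters through the order of the cuspidal divisor (Lemma \ref{Lem 3.2}), and the surjection onto $\Zp/\xi$ is the cup-product map of Definition \ref{Def 3.7}, not a map induced functorially from (\ref{Ses 3.1}); since you do work with Definition \ref{Def 3.7} afterwards, this is a misstatement rather than a fatal error.

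The substantive gap: knowing that $f$ restricted to the $+$ part is an isomorphism onto $\Zp/\xi$ only shows that $\ker f$ is \emph{abstractly isomorphic} to $H^1(X)^{-}/IH^1(X)^{-}$ --- if $f$ were nonzero on the $-$ part, the kernel would be the graph of $-(f|_{+})^{-1}\circ(f|_{-})$, not the $-$ eigenspace itself. To get $\ker f=H^1(X)^{-}/IH^1(X)^{-}$ as submodules, which is what the theorem asserts, you must also check that $f$ vanishes on the $-$ eigenspace. This does hold: $W_{N}\xi\{0,\infty\}_{DM}$ lies in the $+$ eigenspace of $H^1(X)(1)$, and the pairing $H^1(X)\times H^1(X)(1)\rightarrow\Zp$ is equivariant for complex conjugation acting trivially on the target, so it kills $H^1(X)^{-}\times H^1(X)(1)^{+}$ because $p$ is odd --- but it is a step you must write down. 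Relatedly, the $\pm$ decomposition is not a priori stable under $\Gal(\Qbar/\Q)$ (complex conjugation is not central), so the Galois-stability of the sub is precisely what the equivariance of $f$ buys you; once that is in place, your determinant argument correctly forces the action on the sub to be $\kappa^{-1}$, provided you justify via Theorem \ref{mazur}(6) and multiplicity one that both graded pieces are free of rank one over $\mathfrak{h}_{0}(N)/I$, so that ``determinant of the extension equals the product of the two characters'' is meaningful.
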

\begin{proof}
The proof can be found in \cite[Section 6]{FK}.
\end{proof}

\begin{remark}\label{Rem 3.11}
By Theorem \ref{FKsplit} and multiplicity one theorem, we have an isomorphism: 
$$H^1(X)^{+}/IH^{1}(X)^{+}\cong \Zp/\xi\Zp.$$
we call the element of $H^1(X)^{+}/IH^1(X)^{+}$ corresponding to $1\in\Zp/\xi\Zp$ the \em{canonical generator}.
\end{remark}

In fact, Mazur proved a stronger result.
\begin{thm}[Mazur]\label{Thm 3.3}
The exact sequence
\begin{equation}\label{Globalses}
 0\rightarrow H^1(X)^{-}/IH^1(X)^{-}\rightarrow H^1(X)/IH^{1}(X)\xrightarrow{f} H^1(X)^{+}/IH^1(X)^{+}\rightarrow 0,
\end{equation}
of Galois modules splits. The map 
\[H^1(X)^{+}/IH^1(X)^{+}\rightarrow \Coker(H^1(X_{1}(N))\rightarrow H^1(X))\]
is an isomorphism, which gives a splitting of (\ref{Globalses}).
\end{thm}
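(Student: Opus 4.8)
The plan is to produce an explicit splitting of the exact sequence (\ref{Globalses}) by exhibiting a Galois-stable complement to $H^1(X)^{-}/IH^1(X)^{-}$ inside $H^1(X)/IH^1(X)$, and to identify that complement with the cokernel of the pullback map $H^1(X_1(N)) \to H^1(X)$. First I would recall that the Shimura subgroup $\Sigma$ (Definition \ref{Def 3.3}) is the kernel of $\mathrm{Jac}(X_0(N)) \to \mathrm{Jac}(X_1(N))$, and that dually the map $\pi^*\colon H^1(X_1(N),\Zp) \to H^1(X_0(N),\Zp)$ has cokernel naturally dual to (a Tate twist of) $\Sigma$. The key input is that $\Sigma$, as a Galois module, is $\mu$-type: the mod-$p$ Shimura subgroup is isomorphic to $\mu_p$ as a $\Gal(\Qbar/\Q)$-module (this is Mazur's computation; it uses that $\Sigma$ sits inside the kernel of multiplication by the covering degree and is unramified outside $N$, together with a determination of the action via the Weil pairing). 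Hence $\Coker(H^1(X_1(N)) \to H^1(X))$ carries the trivial Galois action on its relevant quotient modulo $I$, matching condition (2) of \thmref{FKsplit}.

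Next I would check that the composite
\[
\Coker\big(H^1(X_1(N)) \to H^1(X)\big) \longrightarrow H^1(X)/IH^1(X) \xrightarrow{\ f\ } H^1(X)^{+}/IH^1(X)^{+}
\]
is an isomorphism. For this I would argue that the source, after localizing at $\mathfrak{P}$ and reducing mod $I$, is a cyclic $\mathfrak{h}_0(N)/I \cong \Zp/\xi$-module, using Theorem \ref{mazur}(1); that it is nonzero because $\Sigma$ has order divisible by the relevant power of $p$ (here $p \mid N-1$ guarantees the $q$-part is present); and that the map to $H^1(X)^{+}/IH^1(X)^{+} \cong \Zp/\xi$ (Remark \ref{Rem 3.11}) is surjective, hence an isomorphism by counting. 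The surjectivity is where I would use that the image of the Shimura-subgroup direction in $H^1(X)$ lands in the $+$-eigenspace — because $\Sigma$ being of $\mu$-type means complex conjugation acts on it the way it acts on $\mu_p$, and under the Poincaré duality identification $H^1(X)(1) \cong H_1(X,\Zp)$ this pairs with the $+$-part — and that it generates $H^1(X)^{+}/IH^1(X)^{+}$ because of the Gorenstein property, Theorem \ref{mazur}(6), which forces the relevant Hecke module to be free of rank one.

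Having shown the composite is an isomorphism, its inverse followed by the inclusion gives a $\Gal(\Qbar/\Q)$-equivariant section of $f$, which splits (\ref{Globalses}); and the section's image is exactly $\Coker(H^1(X_1(N)) \to H^1(X))$ by construction, giving the second assertion. I expect the main obstacle to be the clean identification of the Galois action on the Shimura-subgroup contribution and, relatedly, verifying that its image in $H^1(X)/IH^1(X)$ really lies in the $+$-eigenspace and spans it mod $I$; this is precisely the point where Mazur's structural results on $\Sigma$ and the Gorenstein/multiplicity-one statements of Theorem \ref{mazur} have to be combined carefully, and where a naive dimension count alone is not enough. The remaining steps — the duality identification of $\Coker(\pi^*)$ with $\Sigma^{\vee}$ up to twist, and the cyclicity over $\mathfrak{h}_0(N)/I$ — are routine once that is in place.
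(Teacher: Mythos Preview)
The paper does not actually prove this statement: its entire proof is the citation ``See \cite[Section 16]{Ma1}.'' So there is no in-paper argument to compare your sketch against; what you have written is an attempt to reconstruct Mazur's original reasoning, and the ingredients you invoke --- the Shimura subgroup, its $\mu$-type Galois structure, the decomposition $J_0(N)_{\mathfrak P}[I]=\Sigma_p\oplus C_p$, duality, and multiplicity one/Gorensteinness --- are indeed the ones Mazur uses, as Remark~\ref{Rem 3.12} indicates.

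That said, your sketch has two concrete rough spots. First, the composite you write,
\[
\Coker\big(H^1(X_1(N))\to H^1(X)\big)\;\longrightarrow\; H^1(X)/IH^1(X)\;\xrightarrow{\ f\ }\;H^1(X)^+/IH^1(X)^+,
\]
has its first arrow pointing in a direction that does not exist naturally: $\Coker(\pi_*)$ is a \emph{quotient} of $H^1(X)$, so the natural map runs $H^1(X)/I\to \Coker(\pi_*)$, not the reverse. The theorem's map $H^1(X)^+/IH^1(X)^+\to\Coker$ should be read as the restriction of that quotient map to the $\Zp$-direct summand $H^1(X)^+/I\subset H^1(X)/I$ coming from the complex-conjugation splitting; the content is that this restriction is an isomorphism, equivalently that $\mathrm{Im}(\pi_*)\bmod I$ is a Galois-stable complement to $H^1(X)^+/I$, which is what produces the splitting. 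Second, be careful with the eigenspace bookkeeping: by Remark~\ref{Rem 3.12} it is $H^1(X)^-/I$ that is dual to $\Sigma_p$ and $H^1(X)^+/I$ that is dual to the cuspidal group $C_p$, so your sentence ``the image of the Shimura-subgroup direction in $H^1(X)$ lands in the $+$-eigenspace'' has the sign reversed. Once these directions are straightened out, your outline matches Mazur's route.
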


\begin{proof}
See \cite[Section 16]{Ma1}.
\end{proof}

\begin{remark}\label{Rem 3.12}
In fact, Mazur proved that there is a decomposition of the $I$-torsion on the Jacobian:
\[\textnormal{Jac}(X_{0}(N))_{\mathfrak{P}}(I)=\Sigma_{p}\oplus C_{p},\]
where $\Sigma_{p}$ is the $p$-part of the Shimura subgroup $\Sigma$ and $C_{p}$ is the $p$-part of the cuspidal subgroup. Via the duality between the \'etale cohomology group and the Picard group, we have
\[
H^1(X)^{-}/IH^1(X)^{-}\cong \Hom(\Sigma,\Qp/\Zp), 
\]
\[
H^1(X)^{+}/IH^1(X)^{+}\cong \Hom(C,\Qp/\Zp),
\]
\end{remark}

\section{Extension classes}
In this section, we construct the extension classes we are interested in and compute the corresponding invariants. The construction of such extension classes has been considered in \cite{Ma2}; we revisit it in this section.

Let $q=p^f$ be the highest $p$ power dividing $N-1$, and let $K=\Q(\zeta_{q})$. Let $U=\F_{N}^{\times}/(\F_{N}^{\times})^{q}$.

We set up the following notation.
\begin{nota}\label{Not 3.4}
	Let
	\begin{equation*}
	P=H^1(X)^{-}/IH^1(X)^{-},\,Q=H^1(X)^{+}/IH^1(X)^{+},
	\end{equation*}
\end{nota}

We want to consider the following exact sequence:
\begin{equation}
    0\rightarrow H^1(X)/IH^1(X)\otimes I/I^2\rightarrow H^1(X)/I^2H^1(X)\rightarrow H^1(X)/IH^1(X)\rightarrow 0. 
\end{equation}

By pushout and pullback, we get the following exact sequences:
\begin{enumerate}
    \item[(A)]$0\rightarrow Q\otimes I/I^2\rightarrow ?\rightarrow Q\rightarrow 0$,
    \item[(B)]$0\rightarrow P\otimes I/I^2\rightarrow ?\rightarrow Q\rightarrow 0$,
    \item[(C)]$0\rightarrow Q\otimes I/I^2\rightarrow ?\rightarrow P\rightarrow 0$,
    \item[(D)]$0\rightarrow P\otimes I/I^2\rightarrow ?\rightarrow P\rightarrow 0$.
\end{enumerate}

\begin{nota}\label{Not 4.1}
Let $a\in H^1(\Z[\frac{1}{Np}],I/I^2)$ correspond to the exact sequence (A), $b\in H^1(\Z[\frac{1}{Np}],P\otimes I/I^2)$ correspond to the exact sequence (B), $c\in H^1(\Z[\frac{1}{Np}],P^{-1}\otimes I/I^2)$ correspond to the exact sequence (C), and $d\in H^1(\Z[\frac{1}{Np}],I/I^2)$ correspond to the exact sequence (D).  
\end{nota}

\begin{defn}\label{Def 4.1}
Let
\begin{enumerate}
\item $\chi_{a}\in H^1(\Z[1/Np,\zeta_{q}], I/I^2)^{\Gal(K/\Q)}$ be the image of $a$,
\item $\chi_{b}\in H^1(\Z[1/Np,\zeta_{q}],P\otimes I/I^2)^{\Gal(K/\Q)}$ be the image of $b$,
\item $\chi_{c}\in H^1(\Z[1/Np,\zeta_{q}], P^{-1}\otimes I/I^2)^{\Gal(K/\Q)}$ be the image of $c$,
\item $\chi_{d}\in H^1(\Z[1/Np,\zeta_{q}], I/I^2)^{\Gal(K/\Q)}$ be the image of $d$.
\end{enumerate}
\end{defn}

\begin{prop}\label{Prop 4.1}
For $k=1,-1,0$, there exists a unique field extension $F_{k}/K$ that is Galois over $\Q$, satisfying the following conditions:
\begin{enumerate}
    \item $\Gal(K/\Q)$ acts on $\Gal(F_{k}/K)$ via $\kappa^{k}$, where $\kappa$ is the mod $q$ cyclotomic character,
    \item $\Gal(F_{k}/K)\cong \Z/q\Z$,
    \item $F_{k}/K$ is unramified outside $p$ and $N$,
    \item $F_{k}/K$ is totally tamely ramified at $N$,
    \item $F_{0}$ is the unique $q$-subextension of $K(\zeta_{N})/K$,
    \item $F_{1}$ is peu ramifi\'ee at $p$ over $K$ and equals $K(N^{1/q})$,
    \item $F_{-1}/K$ splits completely at $p$.
\end{enumerate}
\end{prop}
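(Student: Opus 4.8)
The plan is to construct each $F_k$ explicitly and then verify uniqueness via a Galois cohomology / class field theory count. The key point is that we are classifying cyclic degree-$q$ extensions of $K = \Q(\zeta_q)$ on which $\Gal(K/\Q) \cong (\Z/q\Z)^\times$ acts through the character $\kappa^k$, unramified outside $\{p, N\}$ and tamely (hence at most tamely) ramified at $N$. Writing $\Delta = \Gal(K/\Q)$, such an extension corresponds to a nonzero $\Delta$-eigenvector (for $\kappa^k$) in the $q$-torsion subgroup of a suitable ray class group, or equivalently in a Selmer-type subgroup of $H^1(\Z[1/Np, \zeta_q], \Z/q\Z(k))$ cut out by the local conditions. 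So the whole proposition reduces to: (i) producing a field with all seven listed properties, and (ii) showing the relevant eigenspace is cyclic of order exactly $q$.

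First I would handle existence by exhibiting the fields. For $F_0$: since $N \equiv 1 \bmod q$, the field $K(\zeta_N) / K$ has Galois group a quotient of $(\Z/N\Z)^\times$, which is cyclic of order divisible by $q$; take $F_0$ to be its unique subextension of degree $q$. The $\Delta$-action on $\Gal(K(\zeta_N)/K) \subset \Gal(\Q(\zeta_{Nq})/\Q(\zeta_N))$-complement is trivial, i.e. via $\kappa^0$, and $K(\zeta_N)/K$ is totally (tamely) ramified at the prime above $N$ and unramified elsewhere — giving (1)--(5) for $k=0$. For $F_1 = K(N^{1/q})$: Kummer theory gives $\Gal(F_1/K) \cong \mu_q$ with the expected $\kappa^1$-action; ramification at $N$ is tame and total because $v_N(N) = 1$ is prime to $q$; at $p$ one checks $F_1/K$ is \emph{peu ramifiée} because $K$ contains $\zeta_q$ and $N$ is a $p$-adic unit, so locally $F_1 \otimes_K K_\p$ is obtained by adjoining the $q$-th root of a unit — this is exactly the peu ramifiée condition of Serre. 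For $F_{-1}$: here there is no quite-so-explicit generator, and I would instead invoke the identification in Remark after the class field theory isomorphisms, namely $\Gal(F_{-1}/K) \cong H^2(\Z[\tfrac1p, \zeta_N], \Zp(2))_G \cong U^{\otimes 2}\otimes \mu_q^{-1}$, whose existence and the splitting-at-$p$ property (7) follow from the structure of that cohomology group; alternatively build $F_{-1}$ directly as a subfield of the compositum of a ray-class field of conductor $N$ with $K$, twisting to land in the $\kappa^{-1}$-eigenspace.

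For uniqueness, I would run the following count. A degree-$q$ cyclic extension $F/K$ with $\Delta$ acting by $\kappa^k$, unramified outside $\{p,N\}$ and at most tamely ramified at $N$, gives a class in $H^1_{\{p,N\},\mathrm{tame}}(K, \Z/q\Z(k))$ — the subgroup of $H^1(\Z[1/Np,\zeta_q],\Z/q\Z(k))$ of classes whose restriction to $N$ is unramified-or-tame. One then uses the global Euler characteristic formula (Tate--Poitou) together with the explicit local terms: at the infinite places (which are complex, since $p \geq 5$ forces $K$ totally imaginary), at $p$, and the tame local condition at $N$, which contributes a one-dimensional space isomorphic to $U \otimes \mu_q^{\,k-1}$ type factor. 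The upshot is that for each of $k = 0, 1, -1$ the relevant eigenspace is exactly $\Z/q\Z$, and for $k=0,1$ the explicit fields $F_0$, $F_1$ already realize the full group, forcing uniqueness; for $k=-1$ one must additionally note that the \emph{splitting at $p$} condition (7) cuts the a priori slightly larger group down to the correct one-dimensional piece, which is then realized by the $F_{-1}$ constructed above.

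The main obstacle I expect is the $k = -1$ case, on two counts. First, there is no elementary Kummer generator, so one genuinely needs the translation to $H^2(\Z[\tfrac1p,\zeta_N],\Zp(2))_G$ and a Poitou--Tate duality argument to see that the correct local condition at $p$ is \emph{split} (not merely unramified or peu ramifiée) and that this condition is forced rather than imposed — i.e. that property (7) is automatic given (1)--(4). Second, one must check the $\kappa^{-1}$-eigenspace of the global cohomology group with these local conditions is not too big: a priori $H^1(\Z[1/Np,\zeta_q],\Z/q\Z(-1))$ could have extra classes coming from the $p$-part of the class group of $K$ (an Iwasawa-theoretic/Herbrand--Ribet type input), and one needs that for the character $\omega^{-1}$ this contribution vanishes or is absorbed — this is where one uses that $p \geq 5$ and the Kummer--Vandiver-flavored facts about $\Q(\zeta_p)$, or more robustly, a direct local-global computation showing the split-at-$p$ condition kills any such class. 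Once these are pinned down, assembling (1)--(7) for all three $k$ is routine.
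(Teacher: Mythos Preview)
The paper does not actually prove this proposition; it simply cites Calegari--Emerton \cite[Lemma 3.9, Proposition 5.4]{CE}. Your sketch is essentially a reconstruction of the Calegari--Emerton argument, and the overall strategy (explicit construction for $k=0,1$, class field theory for $k=-1$, uniqueness via a Selmer/Poitou--Tate count of the relevant $\kappa^k$-eigenspace) is correct and is what they do.

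One genuine issue to flag: your first proposed construction of $F_{-1}$, invoking the identification $\Gal(F_{-1}/K)\cong H^2(\Z[\tfrac1p,\zeta_N],\Zp(2))_G$, is circular in this paper's logical order. That identification is Remark~\ref{canonicaliso}, which rests on Proposition~\ref{Prop 4.6}, whose proof explicitly uses Proposition~\ref{Prop 4.1}. So you must commit to your ``alternative'': build $F_{-1}$ directly from the $\kappa^{-1}$-eigenspace of (the $p$-part of) the ray class group of $K$ of conductor $N$, and verify the split-at-$p$ property from the class field theoretic description. In practice, existence and uniqueness for $k=-1$ come out simultaneously from the Selmer computation rather than being established separately.

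A smaller correction: the class-group input you need for uniqueness when $k=-1$ is not Kummer--Vandiver but a Herbrand-type vanishing. The potentially obstructing piece is the $\omega^{-1}$-component of the $p$-class group of $\Q(\zeta_p)$, and by Herbrand this is nonzero only if $p\mid B_2=\tfrac{1}{6}$, which never happens. (Equivalently, as you also suggest, one can bypass this entirely by imposing the split-at-$p$ local condition and running the Greenberg--Wiles formula directly; this is closer to what Calegari--Emerton actually do.) With that adjustment your outline is sound.
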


\begin{proof}
See \cite[Lemma 3.9, Proposition 5.4]{CE}.
\end{proof}

\begin{nota}\label{Not 4.2}
We use $G_{k}$ to denote $\Gal(F_{k}/K)$.
\end{nota}

By Proposition \ref{Prop 4.1}, we know that $\chi_{a},\chi_{d}\in \Hom(G_{0},I/I^2)$, $\chi_{b}\in\Hom(G_{-1},P\otimes I/I^2)$, $\chi_{c}\in\Hom(G_{1},P^{-1}\otimes I/I^2)$.

\begin{lemma}\label{Lem 4.1}
There are canonical isomorphisms:
\begin{enumerate}
    \item $G_{0}\cong U$,
    \item $G_{1}\cong \mu_{q}$,
    \item $G_{-1}\cong U^{\otimes 2}\otimes \mu_{q}^{-1}$.
\end{enumerate}
\end{lemma}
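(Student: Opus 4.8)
The goal is to produce the three canonical isomorphisms $G_0\cong U$, $G_1\cong\mu_q$, and $G_{-1}\cong U^{\otimes 2}\otimes\mu_q^{-1}$, where $U=\F_N^\times/(\F_N^\times)^q$. By Proposition~\ref{Prop 4.1}, each $G_k=\Gal(F_k/K)$ is a cyclic group of order $q$ on which $\Gal(K/\Q)$ acts through $\kappa^k$, and $F_k/K$ is totally tamely ramified at $N$ and unramified outside $\{p,N\}$, with the extra rigidifications in items (5)--(7). The strategy is to identify each $G_k$ with an explicit Galois-cohomology or class-group object using global class field theory and Kummer theory, keeping track of the $\Gal(K/\Q)$-action so that the twists by $\mu_q$ come out correctly.

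\textbf{Step 1: the case $G_0$.} Here $F_0$ is the unique $q$-subextension of $K(\zeta_N)/K$. Since $N\equiv 1\bmod p$ and $q=p^f$ is the exact power of $p$ dividing $N-1$, we have $\Gal(\Q(\zeta_N)/\Q)\cong\F_N^\times$ and its maximal $p$-quotient is cyclic of order $q$; base-changing to $K$ (which is linearly disjoint from $\Q(\zeta_N)$ over $\Q$ since $[K:\Q]$ is a power of $p$ dividing $p-1$... more carefully, $K=\Q(\zeta_q)$ and $\Q(\zeta_N)$ have coprime-to-each-other ramification, hence are linearly disjoint) gives $\Gal(F_0/K)\cong\F_N^\times\otimes\Z/q\Z=\F_N^\times/(\F_N^\times)^q=U$. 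The $\Gal(K/\Q)$-action on $\Gal(\Q(\zeta_N)/\Q)$ is trivial (it commutes), matching $\kappa^0$. This is essentially a restatement of (5).

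\textbf{Step 2: the case $G_1$.} By item (6), $F_1=K(N^{1/q})$. Kummer theory gives a canonical isomorphism $\Gal(K(N^{1/q})/K)\cong\mu_q$ sending $\sigma\mapsto\sigma(N^{1/q})/N^{1/q}$, and this identification is $\Gal(K/\Q)$-equivariant for the $\kappa^1$-action on the source (since $\Gal(K/\Q)$ acts on $N^{1/q}$ trivially up to roots of unity and on $\mu_q$ by $\kappa$). Hence $G_1\cong\mu_q$.

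\textbf{Step 3: the case $G_{-1}$ --- the main point.} This is the hard case and the one the paper flags: by item (7), $F_{-1}/K$ splits completely at $p$, so $F_{-1}$ is \emph{not} visibly given by adjoining a radical. I would identify $G_{-1}$ with a Galois-cohomology group via the Poitou--Tate / class-field-theory machinery. Concretely: an element of $\Hom(G_{-1},\mu_q)=\Hom_{\Gal(K/\Q)}(G_{-1},\mu_q)^{\vee}$-type description, or better, recognize $G_{-1}$ as (the $G=\Gal(\Q(\zeta_N)/\Q)$-coinvariants of) $H^2(\Z[\tfrac1p,\zeta_N],\Zp(2))$, which by the remark after the class-field-theory isomorphisms in the introduction and by Proposition~\ref{Prop 4.6} is how the paper wants to see it. The plan is: (a) use the localization sequence in étale cohomology for $\Z[\tfrac1p,\zeta_N]$ with coefficients $\Zp(2)$, together with the vanishing of $H^2$ of the global ring away from the ramified prime(s) over $N$ (this uses that $p$-cohomological dimension considerations and $\mu_q\subset K$), to get $H^2(\Z[\tfrac1p,\zeta_N],\Z/q(2))\cong\bigoplus_{v\mid N}H^1(k(v),\Z/q(1))=\bigoplus_{v\mid N}\mu_q(k(v))\otimes\Z/q$; (b) compute $k(v)^\times\otimes\Z/q=U$ since the residue field at a prime over $N$ in $\Q(\zeta_N)$ is $\F_N$, giving an identification with $\Ind$ of $U\otimes\mu_q$ as a $G$-module, and then take $G$-coinvariants to land on $U\otimes\mu_q$ with the right $\Gal(K/\Q)=\Gal(\Q(\zeta_q)/\Q)$-action; (c) finally use the Kummer-theoretic description of $G_{-1}$ as classifying a cyclic tamely-ramified $\kappa^{-1}$-extension: such extensions are classified by $U\otimes\mu_q^{-1}$ twisted once more, so that matching the two descriptions and being careful about which copy of $U$ and which twist of $\mu_q$ appears yields $G_{-1}\cong U^{\otimes2}\otimes\mu_q^{-1}$. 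The extra factor of $U$ over the naive guess comes precisely from the $H^2$-with-$\Zp(2)$-coefficients normalization (one $U$ from the residue field $\F_N^\times$, another from the global class group input encoded in the coinvariants), and getting this bookkeeping exactly right --- including the direction of the duality $\mu_q^{-1}=\Hom(\mu_q,\Z/q)$ --- is where the real work lies.

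\textbf{Summary of the obstacle.} Steps 1 and 2 are standard Kummer theory / cyclotomic theory given Proposition~\ref{Prop 4.1}. The genuine content is Step 3: proving the identification $G_{-1}\cong H^2(\Z[\tfrac1p,\zeta_N],\Zp(2))_G$ and then unwinding that $H^2$ via the localization sequence to the explicit answer $U^{\otimes2}\otimes\mu_q^{-1}$, with all Galois twists tracked correctly. I expect this to rely on the computation recorded later as Proposition~\ref{Prop 4.6}, and the subtlety is entirely in the twist-bookkeeping rather than in any deep new input.
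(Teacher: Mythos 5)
Parts (1) and (2) of your argument are correct and coincide with the paper's proof: $G_{0}\cong U$ is immediate from $\Gal(K(\zeta_{N})/K)\cong\F_{N}^{\times}$, and $G_{1}\cong\mu_{q}$ is the Kummer map $\sigma\mapsto\sigma(N^{1/q})/N^{1/q}$. The entire content of the lemma is therefore in part (3), and there your write-up has a genuine gap.

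Your overall plan for (3) --- identify $G_{-1}\otimes\mu_{q}$ with $H^2(\Z[\tfrac{1}{p},\zeta_{N}],\Zp(2))_{G}$ and compute the latter by localization at $N$ --- is indeed the paper's route (Remark \ref{canonicaliso}, combining Corollary \ref{new4} with Proposition \ref{Prop 4.6}). But your step (a) is false as stated: $H^2(\Z[\tfrac{1}{p},\zeta_{N}],\Z/q(2))$ is \emph{not} isomorphic to $\bigoplus_{v\mid N}H^1(k(v),\Z/q(1))$. In the localization sequence, $H^2(\Z[\zeta_{N},\tfrac{1}{p}],\Zp(2))$ sits as the \emph{kernel} of the boundary map $H^{2}(\Z[\zeta_{N},\tfrac{1}{Np}],\Zp(2))\to\F_{N}^{\times}\otimes\Zp$ (surjective by the $K$-theory localization sequence, Corollary \ref{new2}); moreover there is a single, totally ramified prime of $\Q(\zeta_{N})$ over $N$, so no induction occurs. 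The mechanism you are missing is the paper's Lemma \ref{new3}: this kernel equals $I_{G}H^{2}(\Z[\zeta_{N},\tfrac{1}{Np}],\Zp(2))$, proved by checking that the boundary map becomes an isomorphism on $G$-coinvariants (both sides being $\F_{N}^{\times}\otimes\Zp$, via descent to $\Z[\tfrac{1}{Np}]$ and $K_{2}(\Z[\tfrac{1}{Np}])\otimes\Zp$). Taking coinvariants of $I_{G}H$ then gives $I_{G}/I_{G}^2\otimes\F_{N}^{\times}\otimes\Zp\cong U^{\otimes 2}$: one copy of $U$ comes from the residue field and the second from $I_{G}/I_{G}^2\otimes\Zp\cong G\otimes\Zp\cong U$ --- not, as you suggest, from ``the global class group input encoded in the coinvariants.'' As written, your steps (a)--(b) would yield only a single copy of $U$ (twisted), so the bookkeeping does not close. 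Finally, the identification $H^2(\Z[\zeta_{N},\tfrac{1}{p}],\Zp(2))_{G}\cong G_{-1}\otimes\mu_{q}$ (Proposition \ref{Prop 4.6}) is itself a substantive argument --- the nine-term Poitou--Tate sequence, vanishing of the local terms at $p$ in the $\kappa^{-1}$-eigenquotient, the inclusion $F_{-1}\Q(\zeta_{Nq})\subset F$, and an order comparison --- which your proposal only names without supplying.
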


\begin{proof}
\begin{enumerate}
\item We have $\Gal(K(\zeta_{N})/K)\cong \F_{N}^{\times}$, so $G_{0}\cong \F_{N}^{\times}/(\F_{N}^{\times})^{q}=U$.

\item For $G_{1}$, we have the Kummer map
\[G_{1}\rightarrow \mu_{q},\,\,\,\,\sigma\mapsto \frac{\sigma(N^{\frac{1}{q}})}{N^{\frac{1}{q}}},
\]
which gives the canonical isomorphism $G_{1}\cong \mu_{q}$.

\item 
For the isomorphism
\[G_{-1}\cong U^{\otimes 2}\otimes\mu_{q}^{-1},\]
see Remark \ref{canonicaliso}.
\end{enumerate}
\end{proof}

\begin{defn}\label{Def 4.2}
By the winding isomorphism, $I/I^2$ is identified with $U$. By (5) of Theorem \ref{mazur} and Poincar\'e duality, $P$ is identified with $I/I^2\otimes\mu_{q}^{-1}$. Hence by Lemma (\ref{Lem 4.1}), $\chi_{a}, \chi_{d}$ are characters from $U$ to $U$, $\chi_{b}$ is a character from
$U^{\otimes 2}\otimes \mu_{q}^{-1}$ to $U^{\otimes 2}\otimes \mu_{q}^{-1}$, $\chi_{c}$ is a character from $\mu_{q}$ to $\mu_{q}$. So, we have four integers attached to $\chi_{a},\chi_{b},\chi_{c},\chi_{d}$, and we use $\tilde{a},\tilde{b},\tilde{c},\tilde{d}$ to denote the invariants.
\end{defn}

The invariants $\tilde{a}$ and $\tilde{d}$ have been already computed in \cite{Ma2}. We review their argument here.

Fixing a basis $e_{+},e_{-}$ of $H^1(X)_{\mathfrak{P}}^{+}/I^2\oplus H^1(X)_{\mathfrak{P}}^{-}/I^2$, one can write the representation as follows:
$$\rho(\sigma)=
\begin{pmatrix}
1+A(\sigma)& B(\sigma)\\
C(\sigma)&\kappa^{-1}(\sigma)(1+D(\sigma))
\end{pmatrix}$$
for $\sigma\in\Gal(\overline{\Q}/\Q)$, where $A,B,C,D:\Gal(\Qbar/\Q)\rightarrow I/I^2$ are homomorphisms, and $\kappa$ is the $p$-adic cyclotomic character.


\begin{prop}\label{Prop 4.2}
For a prime $l\not\equiv1\bmod q,\,\,l\nmid N$, we have
\begin{equation*}
1+A(\Frob_{l}^{-1})=1+\frac{\eta_{l}}{l-1},
\end{equation*}
where $\eta_{l}=1+l-T(l)$.
\end{prop}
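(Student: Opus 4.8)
The plan is to compute the entry $A(\Frob_l^{-1})$ of the Galois representation $\rho$ on $H^1(X)/I^2H^1(X)$ directly from the Eichler–Shimura relation. Recall that $\rho$ is the representation on $H^1_{\textnormal{\'et}}(X_0(N)_{/\Qbar},\Zp)/I^2$, and for a prime $l\nmid Np$ it is unramified at $l$; the characteristic polynomial of $\rho(\Frob_l)$ is $T^2-T(l)T+l$ by the Eichler–Shimura congruence relation. Since we have written $\rho(\sigma)$ in block-diagonal-plus-nilpotent form with respect to the basis $e_+,e_-$ coming from the eigenspace decomposition mod $I$, the trace and determinant of $\rho(\Frob_l)$ can be read off in terms of $A,B,C,D$. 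The key point is that $B(\Frob_l)C(\Frob_l)\in I^2/I^3\otimes(\cdots)$ lands in a piece that dies modulo $I^2$ — more precisely, both $B$ and $C$ take values in $I/I^2$ and the product term in $\det\rho(\Frob_l)$ is quadratic in $I/I^2$, hence zero in $I/I^2$ — so modulo $I^2$ the representation is effectively "upper-plus-lower triangular with scalar corrections" and the diagonal entries $1+A$ and $\kappa^{-1}(1+D)$ control trace and determinant.

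First I would write down $\Tr\rho(\Frob_l) = 1+A(\Frob_l)+\kappa^{-1}(\Frob_l)(1+D(\Frob_l))$ and $\det\rho(\Frob_l) = \kappa^{-1}(\Frob_l)(1+A(\Frob_l))(1+D(\Frob_l)) - B(\Frob_l)C(\Frob_l)$, working in $\mathfrak{h}_0(N)/I^2$. Next, using that $B(\Frob_l)C(\Frob_l)=0$ in $I/I^2$ (product of two elements of $I$), I get $\det\rho(\Frob_l)=\kappa^{-1}(\Frob_l)(1+A(\Frob_l)+D(\Frob_l))$ modulo $I^2$. Comparing with $\det\rho(\Frob_l)=l$ and $\Tr\rho(\Frob_l)=T(l)$: from the determinant, since $\kappa(\Frob_l)\equiv l$, I obtain $1+A(\Frob_l)+D(\Frob_l)\equiv 1$ mod $I^2$, i.e. $A(\Frob_l)=-D(\Frob_l)$. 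From the trace, $T(l)=1+A(\Frob_l)+l(1+D(\Frob_l))\cdot l^{-1}\cdots$ — I would instead use the hypothesis $l\not\equiv 1\bmod q$ to invert $l-1$ in $\Z/q\Z$ and solve: writing $\kappa^{-1}(\Frob_l)=l^{-1}$ in $\Zp/(\xi)$ and combining trace and determinant relations, $T(l)-1-l = $ (something proportional to $A(\Frob_l)(l-1)$ via $\kappa^{-1}-1$), which rearranges to $A(\Frob_l) = \frac{1+l-T(l)}{l-1}\cdot(\text{unit})$ up to sign. Replacing $\Frob_l$ by $\Frob_l^{-1}$ (which introduces $\kappa(\Frob_l)$ in place of $\kappa^{-1}$ and the same $T(l)$ since Hecke operators are symmetric in $\Frob_l,\Frob_l^{-1}$ up to the known relation) yields $1+A(\Frob_l^{-1})=1+\frac{\eta_l}{l-1}$ with $\eta_l=1+l-T(l)$, after checking the sign against the normalization $\widetilde a=-1$ recorded in the text.

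The main obstacle will be pinning down the exact sign and the exact role of $\Frob_l$ versus $\Frob_l^{-1}$: the Eichler–Shimura relation most cleanly gives the characteristic polynomial of $\Frob_l$ acting on étale cohomology, but the statement is phrased with $\Frob_l^{-1}$, so I must be careful with the duality/Poincaré-duality twist by $\kappa$ relating $H^1$ to homology and with whether $\rho$ or its dual appears. I would resolve this by tracking the $\kappa^{-1}$ twist explicitly: on $H^1(X)^-$ the Galois action is $\kappa^{-1}$ times an unramified-at-$l$ character, so $\rho(\Frob_l^{-1})$ on the $(-)$-part contributes $\kappa(\Frob_l)=l$ (in $\Zp/(\xi)$), making the lower-right entry $l^{-1}\cdot l = 1$ to leading order, which is why the clean relation $1+A(\Frob_l^{-1})=1+\eta_l/(l-1)$ emerges on the nose rather than with an extra factor of $l$. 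A secondary check is that the right-hand side indeed lies in $1+I/I^2$: this follows since $\eta_l\in I$ and $l-1$ is a unit mod $q$ (as $l\not\equiv 1\bmod q$), so division by $l-1$ makes sense in $I/I^2$, consistent with the earlier assertion that $\operatorname{im}(a)\subset 1+I/I^2$.
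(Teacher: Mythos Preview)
Your underlying idea is the same as the paper's: since $B$ and $C$ take values in $I/I^2$, the product $BC$ vanishes in $\mathfrak{h}_0(N)/I^2$, so the characteristic polynomial of $\rho(\Frob_l^{-1})$ factors as $(t-(1+A))(t-\kappa^{-1}(\Frob_l^{-1})(1+D))$, and $1+A(\Frob_l^{-1})$ is precisely the root of $t^2-T(l)t+l$ that is congruent to $1$ modulo $I$. The paper then bypasses the trace/determinant bookkeeping entirely and simply verifies by a direct algebraic computation that $1+\tfrac{\eta_l}{l-1}$ satisfies this quadratic modulo $I^2$; since $l\not\equiv 1\bmod q$, the two roots $1$ and $l$ modulo $I$ are distinct, and this pins down the answer.

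There are, however, two genuine problems in your execution. First, your appeal to the value $\widetilde a=-1$ to fix the sign is circular: the computation of $\widetilde a$ (the very next result in the paper) takes this proposition as input, so you cannot invoke it here. Second, you never actually resolve the $\Frob_l$ versus $\Frob_l^{-1}$ ambiguity, leaving a dangling ``$(\text{unit})$ up to sign'' in place of an answer; the remark that ``Hecke operators are symmetric in $\Frob_l,\Frob_l^{-1}$'' is not correct as stated and does not close the gap. Both issues disappear if you commit to $\Frob_l^{-1}$ from the outset: with $\kappa^{-1}(\Frob_l^{-1})=l$, the determinant equation $(1+A)(1+D)\cdot l=l$ gives $A=-D$ in $I/I^2$, and then the trace equation $(1+A)+l(1+D)=T(l)$ becomes $(1-l)A=-\eta_l$, i.e.\ $A=\eta_l/(l-1)$ on the nose, with no residual ambiguity. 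Alternatively, do what the paper does and just substitute the candidate into the quadratic.
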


\begin{proof}
From the representation $\rho$, we know that $1+A(\Frob_{l}^{-1})$ is the root of $t^2-T(l)t+l=0 \mod I^2$ which equal to $1$ modulo $I$. We claim that
\[1+A(\Frob_{l}^{-1})\equiv 1+\frac{\eta_{l}}{l-1}\bmod I^2.\]
This is seen via the following string of equalities:
\begin{eqnarray*}
\bigg(1+\frac{\eta_{l}}{l-1}\bigg)^2-(1+l-\eta_{l})\bigg(1+\frac{\eta_{l}}{l-1}\bigg)+l&=&\bigg(1+\frac{\eta_{l}}{l-1}\bigg)\bigg(\frac{\eta_{l}}{l-1}+\eta_{l}-l\bigg)+l\\
&=&\bigg(1+\frac{\eta_{l}}{l-1}\bigg)\bigg(\frac{l}{l-1}\eta_{l}-l\bigg)+l\\
&=&l\bigg(\frac{\eta_{l}}{l-1}+1\bigg)\bigg(\frac{\eta_{l}}{l-1}-1\bigg)+l\\
&=&l\bigg(\bigg(\frac{\eta_{l}}{l-1}\bigg)^2-1\bigg)+l\\
&\equiv& 0\mod I^2
\end{eqnarray*}
\end{proof}

\begin{thm}[Calegari-Mazur-Sharifi-Stein]\label{Prop 4.3}
We have $\tilde{a}=-\tilde{d}=-1$.
\end{thm}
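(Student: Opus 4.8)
The plan is to compute $\tilde{a}$ and $\tilde{d}$ directly from the homomorphisms $A$ and $D$ appearing in the matrix representation $\rho$, using Proposition \ref{Prop 4.2} together with part (5) of Theorem \ref{mazur} (the winding identity $\tilde{e}^{+}(\eta_{l}) = (l-1)\phi(\bar l)$). The key observation is that once we evaluate $A$ on Frobenius elements at suitable primes $l$, we will have enough data to pin down the integer $\tilde{a} \in \Z/q\Z$, because the Frobenius elements $\Frob_{l}$ for $l \not\equiv 1 \bmod q$ generate (after passing to the appropriate quotient) the relevant Galois group $G_{0} \cong U = \F_{N}^{\times}/(\F_{N}^{\times})^{q}$.

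First I would fix a prime $l$ with $l \not\equiv 1 \bmod q$ and $l \nmid N$, and combine Proposition \ref{Prop 4.2} with the winding isomorphism. By Proposition \ref{Prop 4.2}, $A(\Frob_{l}^{-1}) = \eta_{l}/(l-1)$ in $I/I^{2}$. Transporting through $e^{+}$ (equivalently $\tilde{e}^{+}$), part (5) of Theorem \ref{mazur} gives $\tilde{e}^{+}(\eta_{l}) = (l-1)\phi(\bar l)$, so $\eta_{l}/(l-1)$ corresponds under $e^{+}$ to $\phi(\bar l)$, i.e.\ to the class of $l$ in $U$. On the other hand, the character $\chi_{a}$ attached to the pushout extension (A) is, by construction, the restriction of $A$ (up to the identification $1 + I/I^{2} \cong I/I^{2}$) to $\Gal(\overline{\Q}/K)$, and it factors through $G_{0} \cong U$. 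The isomorphism $G_{0} \cong U$ from Lemma \ref{Lem 4.1}(1) sends $\Frob_{l}|_{F_{0}}$ to the class of $l$ in $\F_{N}^{\times}/(\F_{N}^{\times})^{q}$ (since $F_{0} \subset K(\zeta_{N})$ and $\Frob_{l}$ acts on $\zeta_{N}$ by $\zeta_{N} \mapsto \zeta_{N}^{l}$). Matching the two sides: $\chi_{a}$ sends (the class of) $l$ to $A(\Frob_{l})$, and $A(\Frob_{l}) = -A(\Frob_{l}^{-1}) = -\eta_{l}/(l-1)$ in $I/I^{2}$ (using that $A$ is a homomorphism into the group $I/I^2$, written additively, so $A(\sigma^{-1}) = -A(\sigma)$), which under $e^{+}$ is $-(\text{class of } l)$ in $U$. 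Hence $\chi_{a}$ is multiplication by $-1$, giving $\tilde{a} = -1$.

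For $\tilde{d}$ I would run the parallel argument using $D$ in place of $A$: from $\rho$, $\kappa^{-1}(\Frob_{l}^{-1})(1 + D(\Frob_{l}^{-1}))$ is the \emph{other} root of $t^{2} - T(l)t + l \equiv 0 \bmod I^{2}$, the one congruent to $l$ modulo $I$; since the two roots multiply to $l$, we get $1 + D(\Frob_{l}^{-1}) \equiv (1 + A(\Frob_{l}^{-1}))^{-1} \equiv 1 - \eta_{l}/(l-1) \bmod I^{2}$, i.e.\ $D(\Frob_{l}^{-1}) = -\eta_{l}/(l-1)$, hence $D(\Frob_{l}) = \eta_{l}/(l-1)$, corresponding to $+(\text{class of }l)$ in $U$. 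Therefore $\chi_{d}$ is the identity and $\tilde{d} = 1$, so $\tilde{a} = -\tilde{d} = -1$ as claimed.

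The main obstacle I anticipate is not the arithmetic but the bookkeeping of identifications: one must check carefully that (i) the character $\chi_{a}$ extracted from the extension class $a$ via Definition \ref{Def 4.1} really is the restriction of the homomorphism $A$ from the matrix $\rho$ (this is essentially the compatibility of the pushout/pullback construction of (A)--(D) with the block-matrix form of $\rho$); (ii) the sign conventions in $e^{+}$, in $\phi$, and in the identification $1 + I/I^{2} \cong I/I^{2}$ are consistently tracked, since a single sign error flips $\tilde a$; and (iii) that evaluating on Frobenii at primes $l \not\equiv 1 \bmod q$ suffices to determine the integer $\tilde a$, which follows because such $l$ have classes generating $U$ (equivalently, $\Frob_{l}$ for these $l$ are dense in $\Gal(F_{0}/K)$ by Chebotarev, noting $F_{0}/\Q$ is abelian and $l \not\equiv 1 \bmod q$ is automatic once $l$ is unramified in $F_0$ with nontrivial Frobenius image). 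Once these identifications are in place, the computation is exactly the short string of equalities carried out in Proposition \ref{Prop 4.2}.
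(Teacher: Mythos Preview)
Your proposal is correct and follows essentially the same route as the paper: evaluate $A$ on $\Frob_l^{-1}$ via Proposition~\ref{Prop 4.2}, push through the winding isomorphism using Theorem~\ref{mazur}(5), and read off $\tilde a=-1$; then deduce $D(\Frob_l^{-1})=-\eta_l/(l-1)$ and conclude $\tilde d=1$. The only cosmetic difference is that you obtain $1+D=(1+A)^{-1}$ from the product of the roots of $t^2-T(l)t+l$, whereas the paper uses the trace relation $ (1+A)+l(1+D)=T(l)$; both are one-line computations giving the same result.
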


\begin{proof}
Since 
\[1+A(\Frob_{l}^{-1})+\kappa^{-1}(\Frob_{l}^{-1})(1+D(\Frob_{l}^{-1}))=T(l)=1+l-\eta_{l},\]
\npr and
\[1+A(\Frob_{l}^{-1})\equiv 1+\frac{\eta_{l}}{l-1}\mod I^2,\]
\npr we know that
\[1+D(\Frob_{l}^{-1})= 1-\frac{\eta_{l}}{l-1}.\]
The image of $\Frob_{l}\in U$ is $l$, so $\chi_{a}$ maps $l^{-1}\in U$ to $\frac{1}{l-1}\eta_{l}\in I/I^2$. Via the winding isomorphism, $\eta_{l}$ maps to $l^{l-1}\in U$. Thus, the composite maps $l^{-1}$ to $l$, which means that $\tilde{a}=-1$. 

As for $\chi_{d}$, it maps $l^{-1}\in U$ to $-\frac{1}{l-1}\eta_{l}\in I/I^2$. Via the winding isomorphism, $-\frac{1}{l-1}\eta_{l}$ maps to $l^{-1}\in U$. The composite then maps $l^{-1}$ to $l^{-1}$, which means that $\tilde{d}=1$.

\end{proof}

\section{The invariant $\tilde{c}$}
In this section, we use the method developed by Fukaya and Kato in \cite[Section 9.6.3]{FK} to compute the invariant $\tilde{c}$.

Recall that the invariant $\tilde{c}$ (resp. cocycle $c$) is from the following extension class:
\begin{equation}\label{cuspidal1}
0\rightarrow \frac{I}{I^2}\otimes \frac{H^1(X)^{+}}{IH^1(X)^{+}}\rightarrow \frac{IH^1(X)^{+}}{I^2H^{1}(X)^{+}}\oplus \frac{H^{1}(X)^{-}}{IH^1(X)^{-}}\rightarrow \frac{H^{1}(X)^{-}}{IH^1(X)^{-}}\rightarrow 0
\end{equation}
In this section, we will relate the sequence (\ref{cuspidal1}) to another sequence that arises from the cupsidal extension.

We are interested in the following exact sequence:
\begin{equation}\label{cuspidal3}
    0\rightarrow \frac{H^1(Y)_{DM}^{+}}{IH^1(Y)_{DM}^{+}}\rightarrow \frac{H^{1}(Y)_{DM}} {IH^1(Y)_{DM}}\rightarrow \frac{H^{1}(Y)_{DM}^{-}} {IH^1(Y)_{DM}^{-}}\rightarrow 0.
\end{equation}

Since there is a canonical generator $\DR$ of $\frac{H^{1}(Y)_{DM}^{-}} {IH^1(Y)_{DM}^{-}}$ and $H^1(Y)_{DM,\mathfrak{P}}^+=H^1(X)_{DM,\mathfrak{P}}^{+}$, the sequence (\ref{cuspidal3}) gives an element $c'$ in $H^1(\Z[1/Np],Q(1))\cong H^1(\Z[1/Np],\mu_{q})$.

\begin{prop}\label{Prop 4.4}
We have $c=c'$ in $H^1(\Z[1/Np],\mu_{q})$.
\end{prop}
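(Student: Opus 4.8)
The plan is to show that the extension class $c$ of \eqref{cuspidal1} and the class $c'$ arising from \eqref{cuspidal3} agree by exhibiting a single exact sequence, or a morphism of exact sequences, that maps to both. The essential point is that both $\frac{IH^1(X)^{+}}{I^2H^1(X)^{+}}$ and $\frac{H^1(Y)_{DM}^{+}}{IH^1(Y)_{DM}^{+}}$ can be computed in terms of the same Hecke-module data, once one localizes at $\mathfrak{P}$ and uses Lemma~\ref{Lem 3.2}, Theorem~\ref{mazur}, and the identification $H^1(Y)_{DM,\mathfrak{P}}^{+}=H^1(X)_{DM,\mathfrak{P}}^{+}$. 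So first I would localize everything at $\mathfrak{P}$ (which is harmless since $\mu_q$ is a $p$-group and the relevant cohomology is $p$-torsion), and rewrite \eqref{cuspidal1} using the winding isomorphism $e^+\colon I_{\mathfrak{P}}\xrightarrow{\sim}H^1(X)_{\mathfrak{P}}^{+}$ of Theorem~\ref{mazur}(4): this identifies $\frac{IH^1(X)^{+}}{I^2H^1(X)^{+}}$ with $I^2/I^3$ and, after tensoring appropriately, puts the middle term of \eqref{cuspidal1} in a form directly comparable to the cuspidal sequence.

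Next I would construct the comparison map between the sequences \eqref{cuspidal1} and \eqref{cuspidal3}. The natural candidate is the map on cohomology induced by the inclusion $H^1(X)\hookrightarrow H^1(Y)_{DM}$ from \eqref{Ses 3.1}, together with the projections to $\pm$-eigenspaces and the reductions mod $I$, mod $I^2$. Concretely: the sub-object $\frac{I}{I^2}\otimes Q$ of \eqref{cuspidal1} maps to $\frac{H^1(Y)_{DM}^{+}}{IH^1(Y)_{DM}^{+}}$ via $e^+$ composed with the inclusion, and the quotient $P = \frac{H^1(X)^{-}}{IH^1(X)^{-}}$ maps to $\frac{H^1(Y)_{DM}^{-}}{IH^1(Y)_{DM}^{-}}$; one checks that $\Zp\cdot\DR$ maps isomorphically onto the relevant cyclic piece because $(0)-(\infty)$ has order $\xi$ (Lemma~\ref{Lem 3.2}), so the canonical generator of \eqref{cuspidal3}'s quotient matches the generator used to extract $c$. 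The Hecke-equivariance of all these maps, plus functoriality of the connecting map in Galois cohomology, then forces the two extension classes to be equal in $H^1(\Z[1/Np],\mu_q)$.

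The key step — and the main obstacle — is verifying that the middle terms genuinely correspond under this map, i.e.\ that pulling back \eqref{cuspidal3} along $P\to \frac{H^1(Y)_{DM}^{-}}{IH^1(Y)_{DM}^{-}}$ and pushing out along the identification of the $+$-pieces really recovers \eqref{cuspidal1} and not some twist of it. This amounts to a careful bookkeeping of the filtration by powers of $I$ on $H^1(Y)_{DM}$ versus $H^1(X)$: one must show the natural map $H^1(X)/I^2H^1(X)\to H^1(Y)_{DM}/IH^1(Y)_{DM}$ respects the pushout/pullback operations, which comes down to the freeness/Gorenstein properties in Theorem~\ref{mazur}(6) and the fact that $I$ acts on $H^1(Y)_{DM,\mathfrak{P}}/H^1(X)_{\mathfrak{P}}\cong\mathfrak{h}_0(N)/I$ as zero. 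Once this compatibility is in hand, the equality $c=c'$ follows formally from the fact that both classes are, by construction, the image of the single class of the four-term filtration object under the respective quotient maps. I expect the remaining verifications — Galois-equivariance, the matching of canonical generators, and the twist by $(1)$ turning $Q$ into $\mu_q$ — to be routine given Theorem~\ref{FKsplit} and Remark~\ref{Rem 3.11}.
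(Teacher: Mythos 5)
Your overall strategy (identify the two sequences term by term after localizing at $\mathfrak{P}$) is the right one and is in the spirit of the paper's proof, but the comparison map you propose does not exist in the form you describe, and the step you defer as ``routine bookkeeping'' is exactly where the content lies. The map on quotient terms that you need, $P=H^1(X)^-/IH^1(X)^-\to H^1(Y)_{DM}^-/IH^1(Y)_{DM}^-$, cannot be ``induced by the inclusion $H^1(X)\hookrightarrow H^1(Y)_{DM}$'': after localizing at $\mathfrak{P}$ one has the \emph{equality} $H^1(X)^-_{\mathfrak{P}}=IH^1(Y)^-_{DM,\mathfrak{P}}$, so the inclusion lands inside $IH^1(Y)^-_{DM,\mathfrak{P}}$ and the induced map to $H^1(Y)^-_{DM}/IH^1(Y)^-_{DM}$ is zero, not an isomorphism. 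The sub-objects have the same mismatch: the sub of \eqref{cuspidal1} is $\tfrac{I}{I^2}\otimes Q$ while that of \eqref{cuspidal3} is $Q$ itself, so both ends of \eqref{cuspidal1} differ from those of \eqref{cuspidal3} by a tensor factor of $I/I^2$, and no natural (choice-free) map bridges that gap.

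The missing ingredient is the principality of $I_{\mathfrak{P}}$ in the Eisenstein component (a consequence of the Gorenstein property and the winding isomorphism of Theorem \ref{mazur}, so you were close when you invoked Theorem \ref{mazur}(6), but you never actually used it). The argument that works is: first use the equality $H^1(X)^-_{\mathfrak{P}}=IH^1(Y)^-_{DM,\mathfrak{P}}$ to rewrite \eqref{cuspidal1} so that each of its three terms has the form $IM/I^2M$, where $M$ runs over the corresponding terms of \eqref{cuspidal3}; then choose a generator $\eta$ of $I_{\mathfrak{P}}$, so that multiplication by $\eta$ gives isomorphisms $M/IM\xrightarrow{\sim}IM/I^2M$ simultaneously on sub, middle and quotient, hence an isomorphism of the two short exact sequences; finally note that although each such isomorphism depends on $\eta$, the extension class does not, since changing $\eta$ rescales the sub and the quotient by the same unit. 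Without this device (or an equivalent one) your claim that pulling back and pushing out \eqref{cuspidal3} ``really recovers \eqref{cuspidal1}'' is unsupported, because the maps along which you would pull back and push out have not been produced.
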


\begin{proof}
Since $IH^{1}(Y)_{DM,\mathfrak{P}}^{-}=H(X)^{-}_{\mathfrak{P}}$,
we can rewrite sequence (\ref{cuspidal1}) as follows:
$$0\rightarrow \frac{I}{I^2}\otimes \frac{H^1(X)^{+}}{IH^1(X)^{+}}\rightarrow  \frac{IH^{1}(X)^{+}}{I^2H^1(X)^{+}}\oplus\frac{IH^{1}(Y)_{DM}^{-}} {I^2H^1(Y)_{DM}^{-}}\rightarrow \frac{IH^{1}(Y)_{DM}^{-}} {I^2H^1(Y)_{DM}^{-}}\rightarrow 0.$$
\npr Since $I$ is a principal ideal in the Eisenstein component and $H^1(Y)_{DM,\mathfrak{P}}^{-}/H^1(X)_{\mathfrak{P}}^{-}\cong h/I\cong I/I^2$, choosing a generator $\eta\in I/I^2$, we get the isomorphisms $\frac{H^1(X)^{+}}{IH^1(X)^{+}}\cong \frac{I}{I^2}\otimes \frac{H^1(X)^{+}}{IH^1(X)^{+}}$ and $\frac{H^{1}(Y)_{DM}^{-}} {IH^1(Y)_{DM}^{-}}\cong \frac{IH^{1}(Y)_{DM}^{-}} {I^2H^1(Y)_{DM}^{-}}$. By doing this, we identify two exact sequences. Although the identification depends on the choice of the generator, the extension class does not change.
\end{proof}

So, in order to compute $c$ or $\tilde{c}$, it suffices to compute the extension class obtained from (\ref{cuspidal3}). We need to make some preparations. 

Let $J$ be the Jacobian variety of $X_{0}(N)$, and let $GJ$ be the generalized Jacobian variety of $X_{0}(N)$ with respect to the cusps of $X_{0}(N)$. For the general properties of Jacobians and generalized Jacobians of algebraic curves, one can see \cite[Section 3]{Oh0}.
We have the following exact sequence:
\begin{equation}\label{GJexact}
0\rightarrow T\rightarrow GJ\rightarrow J\rightarrow 0,
\end{equation}
where $T:=\Ker(GJ\rightarrow J)$, and we have a canonical isomorphism $T\cong \Coker(\mathbb{G}_{m}\rightarrow \mathbb{G}_{m}\times \mathbb{G}_{m})$. 

Note that we have the following duality between \'etale cohomology groups and generalized Jacobians
\[H^1(Y)\cong GJ[p^{\infty}]^{\wedge},\,\,H^1(X)\cong J[p^{\infty}]^{\wedge}.\]
where $\wedge$ is $\Qp/\Zp$-dual. By this duality, the sequence 
\[0\rightarrow T\rightarrow GJ\rightarrow J\rightarrow 0\]
is dual to
\[0\rightarrow H^1(X)\rightarrow H^1(Y)\rightarrow\Zp \rightarrow 0.\]

Taking the $q$-kernel of the first of the above two sequences, we have the following exact sequence:
\begin{equation}\label{Generaljacobian1}
0\rightarrow T[q]\rightarrow GJ[q]\rightarrow J[q]\rightarrow 0
\end{equation}
The exactness of above sequence is from the exactness of the following sequence:
\begin{equation}\label{torsionses}
0\rightarrow H^1(X)/q\rightarrow H^1(Y)/q\rightarrow \Zp e/q\rightarrow 0,
\end{equation}
where $e=(\infty)-(0)\in \frac{H^1(Y)}{H^1(X)}$.

\begin{nota}
Let $\xi=\frac{n}{m}$ and $n=vq$, where $(n,m)=1$. Note that $\frac{\xi}{q}$ and $\frac{1}{v}$ can be viewed as elements in $(\Z/q\Z)^{\times}$. To avoid ambiguity, we choose $\widetilde{r}, \widetilde{v}\in\Z$ such that $\widetilde{r}\equiv\frac{\xi}{q}\in\Z/q\Z, \widetilde{v}\equiv\frac{1}{v}\in\Z/q\Z$.
\end{nota}

\npr 
Let $D=v(0)-v(\infty)\in J[p^{\infty}]$. 
The following lemma is from \cite[section 9.6]{FK}.

\begin{lemma}\label{Lem 4.3}
Let \[\frac{H^1(Y)(1)_{DM}}{H^1(X)(1)}\xrightarrow{\iota} H^1(X)(1)\otimes \frac{\Qp}{\Zp}\] be the map induced by $H^1(Y)_{DM}(1)\rightarrow H^1(X)(1)\otimes_{\Zp}\frac{\Qp}{\Zp}$. Then $\iota$ maps the image of $v\{0,\infty\}_{DM}$ to the image of $D$ in $H^1(X)(1)\otimes\frac{\Qp}{\Zp}$.
\end{lemma}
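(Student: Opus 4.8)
## Proof proposal for Lemma \ref{Lem 4.3}

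The plan is to unwind both sides of the claimed identity through the duality between étale cohomology and (generalized) Jacobians, reducing the statement to a computation of divisor classes on $X_0(N)$. The key observation is that the map $\iota$ is, up to the Poincaré/$\Qp/\Zp$-duality identifications, precisely the connecting-type map attached to the short exact sequence $0\to T\to GJ\to J\to 0$ of (\ref{GJexact}), or equivalently to its $q$-torsion version (\ref{Generaljacobian1}); under the duality $H^1(Y)\cong GJ[p^\infty]^\wedge$, $H^1(X)\cong J[p^\infty]^\wedge$, the quotient $H^1(Y)/H^1(X)\cong \Zp e$ corresponds to $T\cong \Coker(\Gm\to\Gm\times\Gm)$, and the element $e=(\infty)-(0)$ is paired with the canonical generator of this torus coming from the two cusps.

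First I would fix the explicit identifications: trace through (\ref{torsionses}) to see that $\iota$ sends the class of $v\{0,\infty\}_{DM}$ in $H^1(Y)(1)_{DM}/H^1(X)(1)$ to the obstruction to lifting it to $H^1(X)(1)\otimes\Qp/\Zp$, i.e.\ to the image under the boundary map of the sequence $0\to H^1(X)(1)\to H^1(X)(1)\otimes\Qp\to H^1(X)(1)\otimes\Qp/\Zp\to 0$ after using that $v\{0,\infty\}_{DM}$ has bounded denominator coming from the order $\xi$ of $(0)-(\infty)$. Dually, the Drinfeld–Manin splitting $\{0,\infty\}_{DM}$ corresponds on the Jacobian side to the canonical section that trivializes the divisor $(0)-(\infty)$ after multiplication by $\xi$; multiplying by $v$ and remembering $n=vq$, $\xi=n/m$, the relevant torsion class is exactly the class of $D=v(0)-v(\infty)$, which by definition of $\xi$ and $v$ is a $p$-power-torsion point of $J$. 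So the content is that the two cusps $(0),(\infty)$, which generate the torus $T$ on the generalized-Jacobian side, map under $GJ\to J$ to the divisor class $[D]$, and that this matches the Drinfeld–Manin normalization on the cohomology side.

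Concretely, the steps in order are: (i) recall that $(0)-(\infty)$ generates the cuspidal subgroup, of order $\xi$, and that $\{0,\infty\}_{DM}$ is characterized by $\xi\{0,\infty\}_{DM}\in H^1(X)$ (Remark \ref{Rem 3.4}); (ii) dualize the exact sequence (\ref{torsionses}) to identify $\iota$ with the map $T[q]\to J[q]\hookrightarrow J[p^\infty]$ induced from (\ref{Generaljacobian1}), keeping careful track of the factor of $q$ between $n$ and $v$; (iii) identify the image of the distinguished element of $T[q]$ (the one dual to $e=(\infty)-(0)$, rescaled by $v$) with the divisor class of $D=v(0)-v(\infty)$ in $J[p^\infty]$; (iv) assemble (i)–(iii) to conclude that $\iota(v\{0,\infty\}_{DM})=[D]$ in $H^1(X)(1)\otimes\Qp/\Zp$. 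Most of this is bookkeeping, and the underlying geometric fact — that the boundary map of $0\to T\to GJ\to J\to 0$ applied to the cusp-data recovers the cuspidal divisor class — is standard and is exactly the computation carried out in \cite[Section 9.6]{FK}.

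The main obstacle I anticipate is purely one of normalization: pinning down the precise compatibility between the Drinfeld–Manin splitting (a choice on the homology/cohomology side) and the canonical splitting-up-to-torsion of $GJ\to J$ over the cusps, so that the integer $v$ appears with the correct exponent and no spurious unit creeps in. Getting the denominators right — i.e.\ that it is $v$ and not $n$ or some other multiple that makes $\{0,\infty\}_{DM}$ land in the right torsion subgroup — requires matching the definition $\xi=n/m$, $n=vq$ with the order-$\xi$ statement for $(0)-(\infty)$, and is the one place where a sign or index error would be easy to make. Once that normalization is fixed, the lemma follows formally by dualizing (\ref{torsionses}) and reading off the image, exactly as in \cite{FK}.
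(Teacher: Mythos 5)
Your reduction of the problem is right in one respect: $\iota(v\{0,\infty\}_{DM})$ is just the fractional part of $v\{0,\infty\}_{DM}\in H^1(X)(1)\otimes\Qp$ read off modulo $H^1(X)(1)$, so the lemma amounts to computing that fractional part and matching it with the $q$-torsion class $D=v(0)-v(\infty)$. But the route you propose for doing this does not go through. First, the identification of $\iota$ ``with the map $T[q]\to J[q]\hookrightarrow J[p^\infty]$ induced from (\ref{Generaljacobian1})'' does not parse: in that sequence $T[q]$ is the \emph{kernel} of $GJ[q]\to J[q]$, so there is no induced map from $T[q]$ to $J[q]$, and likewise the cusps do not ``generate the torus $T$ and map under $GJ\to J$ to $[D]$'' --- elements of $T$ map to $0$ in $J$. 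Second, and more seriously, your step (iii) --- that the distinguished element on the $T$-side goes to the divisor class $D$ --- is exactly the content of the lemma; declaring it ``standard'' and deferring it to \cite[Section 9.6]{FK} is citing the result you are asked to prove.

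What is actually missing is the Kummer-theoretic bridge between the modular symbol and the divisor class, which is the engine of the paper's proof. One chooses a modular unit $h\in\O(Y)^{\times}$ with $qv\{0,\infty\}\equiv\Div(h)$ in $H^1(Y)(1)/H^1(X)(1)$ (possible because $(0)-(\infty)$ has order $\xi$ and $qv\equiv 0$ modulo that order up to the unit $m$), writes $-qv\{0,\infty\}=[h]+x$ with $x\in H^1(X)(1)$, and then uses two separate facts: (a) the Drinfeld--Manin splitting annihilates the Kummer class $[h]$ of a unit, so $qv\{0,\infty\}_{DM}=-x$ and hence $\iota(v\{0,\infty\}_{DM})=(-x)\otimes\tfrac1q$; and (b) the Kummer-theoretic identification $i(D)=[h]$ in $H^1(Y)(1)/q$ (\cite[Lemma 9.6.4]{FK}, i.e.\ the $q$-torsion point $D$ with $qD=\Div(h)$ corresponds to the class of $h$ under the mod-$q$ Kummer map), which combined with injectivity of $H^1(X)(1)/q\to H^1(Y)(1)/q$ gives $D=-x_f$. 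Without introducing $h$ and the element $x$, your outline has no way to compare the two sides, and the normalization issue you correctly flag as the ``main obstacle'' is never resolved.
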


\begin{proof}
The proof can be found in \cite[Lemma 9.6.5]{FK}. For the convenience of the reader, we review the proof here. Viewing $v\{0,\infty\}$ as an element in $H^1(Y)(1)$, we have $qv\{0,\infty\}=\Div(h)$ in $H^1(Y)(1)/H^1(X)(1)$ for some $h\in \O(Y)^{\times}$. This means that $-qv\{0,\infty\}\equiv [h]\bmod H^1(X)(1)$, where $[h]$ is the Kummer class of $h$. We can write
\begin{equation}\label{SBSW}
-qv\{0,\infty\}=[h]+x
\end{equation}
for some $x\in H^1(X)(1)$. We write $x$ as a projective system $(x_1,x_{2},\ldots)$, where $x_{i}\in \frac{H^1(X)(1)}{p^{i}H^1(X)(1)}$ for $i\in\Z_{>0}$.
Modulo $q$, we have \[
[h]=-x_{f} \in \frac{H^1(Y)(1)}{qH^1(Y)(1)}.\]
From the exact sequence
\[0\rightarrow H^1(X)(1)\rightarrow H^1(Y)(1)\rightarrow \Zp\rightarrow 0,\]
we know that the map $\frac{H^1(X)(1)}{qH^1(X)(1)}\xrightarrow{i} \frac{H^1(Y)(1)}{qH^1(Y)(1)}$ is injective. By \cite[Lemma 9.6.4]{FK}, we know that $i(D)=[h]$ and $i(D+x_{f})=x_{f}+[h]=0$. Hence,
\begin{equation}\label{SBSBSB}
D=-x_{f} \in \frac{H^1(X)(1)}{qH^1(X)(1)}.
\end{equation}
Taking the Drinfeld-Manin splitting map for (\ref{SBSW}), since $[h]$ maps to $0$ and $x$ maps to $x$, we have
\begin{equation}\label{SBSB}
qv\{0,\infty\}_{DM}=-x \in H^1(Y)(1)_{DM}.
\end{equation}
Note that we have a canonical injection
\begin{equation*}
\frac{H^1(Y)(1)_{DM}}{H^1(X)(1)}\xrightarrow{\iota} H^1(X)(1)\otimes \frac{\Qp}{\Zp}.
\end{equation*}
By (\ref{SBSB}), we have $\iota(v\{0,\infty\}_{DM})=(-x)\otimes\frac{1}{q}$. By (\ref{SBSBSB}), we have $(-x)\otimes\frac{1}{q}=D\in H^1(X)(1)\otimes \frac{\Qp}{\Zp}$. 
\end{proof}

\begin{remark}\label{woshisb}
The map $\iota$ maps $\{0,\infty\}_{DM}$ to $\widetilde{v}D\in J[q]$.
\end{remark}
\begin{lemma}\label{Lem 4.4}
The map
\begin{equation*}
(-,W_{N}\xi\{0,\infty\}_{DM}):H^1(X)\rightarrow \Zp/\xi\Zp\rightarrow\frac{\frac{1}{\xi \Zp}}{\Zp}\hookrightarrow\frac{\Qp}{\Zp}
\end{equation*}
corresponds to the element $-\widetilde{v}D$ in $J[p^{\infty}]$. 
\end{lemma}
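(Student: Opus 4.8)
The plan is to unwind the definition of the cup-product pairing appearing in Definition \ref{Def 3.7} and Theorem \ref{mazur}, and to translate it, via the duality $H^1(X)\cong J[p^\infty]^\wedge$, into a statement about a point of $J[p^\infty]$. First I would recall that by Poincar\'e duality the perfect cup product
\[
(\,,\,): H^1(X)\times H^1(X)(1)\longrightarrow \Zp
\]
identifies $H^1(X)(1)$ with $\Hom(H^1(X),\Zp)$, and under $H^1(X)(1)\cong H_1(X_0(N),\Zp)$ this is the Poincar\'e duality pairing on homology. The map in the statement is obtained by first mapping $H^1(X)\to \Zp/\xi\Zp$ via $x\mapsto (x, W_N\xi\{0,\infty\}_{DM})$ and then using the inclusion $\Zp/\xi\Zp\hookrightarrow \Qp/\Zp$, $1\mapsto \tfrac1\xi$; so concretely it sends $x$ to $(x, W_N\xi\{0,\infty\}_{DM})\cdot\tfrac1\xi\in\Qp/\Zp$, which by bilinearity is $(x, W_N\{0,\infty\}_{DM})\bmod\Zp$, i.e. the element of $\Hom(H^1(X),\Qp/\Zp)$ given by pairing against the image of $W_N\{0,\infty\}_{DM}$ in $H^1(X)(1)\otimes\Qp/\Zp$.

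Next I would identify this element of $H^1(X)(1)\otimes\Qp/\Zp$ with a point of $J[p^\infty]$. Under the duality $H^1(X)\cong J[p^\infty]^\wedge$, an element of $\Hom(H^1(X),\Qp/\Zp)=H^1(X)^\wedge$ is exactly a point of $J[p^\infty]$, and the Weil pairing identifies the image of a torsion class $z\in H^1(X)(1)\otimes\Qp/\Zp$ with the corresponding point. So the task reduces to computing the image of $W_N\{0,\infty\}_{DM}$ in $H^1(X)(1)\otimes\Qp/\Zp$ under $\iota$. By Lemma \ref{Lem 4.3} and Remark \ref{woshisb}, $\iota$ sends $\{0,\infty\}_{DM}$ to $\widetilde{v}D\in J[q]$, where $D=v(0)-v(\infty)$; applying the Atkin-Lehner involution $W_N$, and using that $W_N$ swaps the cusps $0$ and $\infty$ (so $W_N D = v(\infty)-v(0)=-D$), we get that $\iota$ sends $W_N\{0,\infty\}_{DM}$ to $-\widetilde{v}D$. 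Combining with the previous paragraph, the map in the statement corresponds to $-\widetilde{v}D\in J[p^\infty]$, as claimed.

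The main subtlety I expect is bookkeeping around the Tate twist and the various identifications: making sure that the passage $\Zp/\xi\Zp\hookrightarrow\Qp/\Zp$ combined with the pairing really produces the functional "pair against $\tfrac1\xi\cdot(\text{image of }W_N\xi\{0,\infty\}_{DM})$" rather than off by the unit $\widetilde{r}$, and that the identification of $\Hom(H^1(X),\Qp/\Zp)$ with $J[p^\infty]$ via cup product agrees on the nose with the identification implicitly used in Lemma \ref{Lem 4.3} (both coming from Poincar\'e/Weil duality, but one must check signs and normalizations are consistent). The other point requiring a line of justification is that $W_N$ interchanges the two cusps of $X_0(N)$ — this is standard, $W_N$ being induced by $E\mapsto E/C$ — so that $W_N D=-D$; once that is in place the computation is immediate from Remark \ref{woshisb}.
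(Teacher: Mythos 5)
Your proposal is correct and follows essentially the same route as the paper: reduce the cup-product functional to pairing against $\iota(W_N\{0,\infty\}_{DM})$ and invoke Remark \ref{woshisb}. The only (immaterial) difference is how you get the sign from $W_N$ — you argue geometrically that $W_N$ swaps the cusps so $W_ND=-D$, whereas the paper uses that $W_N\equiv -1$ in $\mathfrak{h}_0(N)_{\mathfrak{P}}$; both are valid here.
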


\begin{proof}
We identify $\frac{H^1(Y)(1)_{DM}}{H^1(X)(1)}$ with $J[p^{\infty}]$ using the following canonical isomorphism
\[\frac{H^1(Y)(1)_{DM}}{H^1(X)(1)}\xrightarrow{\iota} H^1(X)(1)\otimes \frac{\Qp}{\Zp}=J[p^{\infty}].\]
From the pairing 
\[H^1(X)\times J[p^{\infty}]\rightarrow \frac{\Qp}{\Zp},\]
we have the following pairing
\[H^1(X)\times \frac{H^1(Y)(1)_{DM}}{H^1(X)(1)}\rightarrow \frac{\Qp}{\Zp}.\]
Via the isomorphism
\[\frac{H^1(Y)(1)_{DM}}{H^1(X)(1)}\rightarrow J[p^{\infty}],\]
$\{0,\infty\}_{DM}$ maps to $\widetilde{v}D\in J[q]$ (Remark \ref{woshisb}). Note that the image of $W_{N}$ in $\mathfrak{h}_{0}(N)_{\mathfrak{P}}$ is $-1$. Therefore, the map $(-,W_{N}\xi\{0,\infty\}_{DM})$ can be restated as
\[H^1(X)\xrightarrow{s}\frac{\Qp}{\Zp}[q]\cong \frac{\frac{1}{\xi\Zp}}{\Zp},\]
where the map $s$ corresponds to $-\widetilde{v}D\in \Hom(H^1(X),\frac{\Qp}{\Zp}[q])$. Then the lemma follows.
\end{proof}

\begin{cor}
Taking the $\frac{\Qp}{\Zp}$-dual of the map $H^1(X)\rightarrow Q=\Z/q\Z$, we get a map
\[Q^{\wedge}\rightarrow J[q].\]
Then the canonical generator of $Q^{\wedge}$ maps to
$-r\widetilde{v}D\in J[q]$.
\end{cor}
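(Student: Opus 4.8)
The plan is to obtain this Corollary simply by applying the $\Qp/\Zp$-dual functor to Lemma~\ref{Lem 4.4} and keeping careful track of normalizations. First I would identify the players: the map $H^1(X)\to Q=\Z/q\Z$ is the map $f$ of Definition~\ref{Def 3.7}, since its target $\Zp/\xi\Zp$ is identified with $Q=H^1(X)^+/IH^1(X)^+$ via the canonical generator (Remark~\ref{Rem 3.11}) and $\Zp/\xi\Zp=\Z/q\Z$ as $p\nmid 12$. Taking $\Qp/\Zp$-duals (the functor $\Hom(-,\Qp/\Zp)$ denoted $\wedge$ in the paper) gives $f^\wedge\colon Q^\wedge\to\Hom(H^1(X),\Qp/\Zp)$, and via the duality $H^1(X)\cong J[p^\infty]^\wedge$ recalled in Section~5 one identifies $\Hom(H^1(X),\Qp/\Zp)=J[p^\infty]$; since $Q^\wedge$ is killed by $q$ its image lands in $J[p^\infty][q]=J[q]$, which is the map $Q^\wedge\to J[q]$ of the statement. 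The canonical generator $\psi_0$ of $Q^\wedge$ is the homomorphism sending the canonical generator of $Q$ to $\tfrac1q\in\Qp/\Zp$, and $f^\wedge(\psi_0)=\psi_0\circ f$.

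The remaining step is to compare $\psi_0\circ f$ with the homomorphism treated in Lemma~\ref{Lem 4.4}. Writing $\iota$ for the embedding $\Zp/\xi\Zp\cong \tfrac1\xi\Zp/\Zp\embed\Qp/\Zp$ used there (so $\iota$ sends $1$ to $\tfrac1\xi$), the composite $\iota\circ f$ is exactly the map $(-,W_N\xi\{0,\infty\}_{DM})$, which by Lemma~\ref{Lem 4.4} corresponds to $-\widetilde v D$ under $\Hom(H^1(X),\Qp/\Zp)=J[p^\infty]$. Now $\psi_0$ and $\iota$ differ by the $p$-adic unit $\xi/q$, which reduces to $r=\widetilde r$ modulo $q$; since the identification $\Hom(H^1(X),\Qp/\Zp)\cong J[p^\infty]$ is $\Zp$-linear and $\widetilde v D\in J[q]$ (because $\{0,\infty\}_{DM}$ has order $q$ in $H^1(Y)_{DM,\mathfrak{P}}/H^1(X)_{\mathfrak{P}}$, cf. Lemma~\ref{Lem 3.2}), it follows that $f^\wedge(\psi_0)=\psi_0\circ f=\tfrac\xi q(\iota\circ f)$ corresponds to $-r\widetilde v D\in J[q]$, as claimed.

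I do not anticipate a genuine obstacle here: all the geometric input is already in Lemma~\ref{Lem 4.4}, and the Corollary is just its reformulation through the perfect pairing $H^1(X)\times J[p^\infty]\to\Qp/\Zp$. The only point demanding care is that $Q$ is intrinsically cyclic of order $\xi$ whereas $J[q]$ is $q$-torsion, so the canonical generator of $Q^\wedge$ and the source of the embedding $\iota$ of Lemma~\ref{Lem 4.4} differ by the unit $\xi/q$; this discrepancy is precisely what produces the scalar $r$. One should also confirm that the sign conventions for the pairing and for the relation $W_N\mapsto -1$ match those of Lemma~\ref{Lem 4.4} so that the sign in $-r\widetilde v D$ is correct, but this has already been handled in that proof.
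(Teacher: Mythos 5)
Your proposal is correct and follows essentially the same route as the paper: both deduce the corollary from Lemma \ref{Lem 4.4} by observing that the canonical generator $\tfrac1q$ of $\tfrac{\Qp}{\Zp}[q]$ and the generator $\tfrac1\xi$ used in that lemma differ by the unit $\xi/q\equiv r\bmod q$, which rescales $-\widetilde v D$ to $-r\widetilde v D$. Your write-up is in fact a bit more careful than the paper's about identifying the map $H^1(X)\to Q$ with $f$ and checking that the image lands in $J[q]$, but the underlying argument is identical.
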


\begin{proof}
For $\tfrac{\Qp}{\Zp}[q]$, we have a canonical generator $\tfrac{1}{q}$. So we have a canonical isomorphism
\[Q^{\wedge}\cong \Z/q\Z.\]
Note that in Lemma \ref{Lem 4.4}, we choose a generator $\tfrac{1}{\xi}\in\tfrac{\Qp}{\Zp}[q]$. So the canonical generator of $Q^{\wedge}$ maps to $-r\widetilde{v}D$.
\end{proof}

\begin{defn}\label{Def 4.3}
In the sequence (\ref{Generaljacobian1}), pulling back by the map $\Z/q\Z\rightarrow J[q]$ which takes $1$ to $-r\widetilde{v}D$, and pushing out by the map $T[q]\rightarrow \mu_{q}$ corresponding to $e$, we get an extension class:
\begin{equation}\label{Generaljacobian2}
    0\rightarrow \mu_{q}\rightarrow\,\,?\rightarrow \Z/q\Z\rightarrow 0.
\end{equation}
\end{defn}

\begin{remark}
The extension class (\ref{cuspidal3}) is obtained from the sequence (\ref{torsionses}) via pullback by $e$, and pushout by the map $H^1(X)\rightarrow Q$. If we take the Pontryagin dual, it is the sequence that is obtained from the sequence (\ref{torsionses}) via pullback by the map of $\Z/q\Z\rightarrow J[q]$ which maps $1$ to $-r\widetilde{v}D$ and pushout by $e$. So the extension class (\ref{Generaljacobian2}) is negative of the extension class given by (\ref{cuspidal3}). We will compute the extension class given by (\ref{Generaljacobian2}).
\end{remark}

 For the computation of the extension class given by (\ref{Generaljacobian2}), we have the following general proposition which is from \cite[Section 9.6]{FK}:
\begin{prop}\label{Prop 4.5}
Let $C$ be a smooth projective curve over a field $k$ with characteristic $0$. Let $\Sigma$ be a finite set of $k$-rational points of $C$. Fix two degree $0$ divisors $D_{1}$ and $D_{2}$ such that both of them are supported on $\Sigma$, and $D_{1}$ is order $q$. Starting from the following exact sequence
$$0\rightarrow T[q](\bar{k})\rightarrow GJ[q](\bar{k})\rightarrow J[q](\bar{k})\rightarrow 0,$$
we may pull back by the map $\Z/q\Z\rightarrow J[q]; 1\mapsto D_{1}$ and push out by the map $D_{2}: T[q]\rightarrow \mu_{q}$ to obtain an exact sequence
\begin{equation}\label{Generalses}
0\rightarrow \mu_{q}\rightarrow\,\,?\rightarrow \Z/q\Z\rightarrow 0.
\end{equation}
Let $F$ be a rational function on $C$ such that $qD_{1}=(F)$, and let $h$ be a rational function on $C$ such that $\textnormal{div}(h)-D_{1}$ is supported away from $\Sigma$. Then the extension class of (\ref{Generalses}) coincides with the Kummer class of 
$$(\frac{F}{h^q})(-D_{2}):=\prod_{x\in\Sigma}\frac{F}{h^q}(x)^{-m(x)},$$
where $D_{2}=\sum_{x\in\Sigma} m(x)\cdot x$
\end{prop}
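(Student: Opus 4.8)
The plan is to reduce the computation of the extension class of \eqref{Generalses} to an explicit Kummer symbol by trivializing both the pullback and the pushout over a concrete covering curve. First I would recall the general mechanism: an element of $\Ext^1_{\Gal(\bar k/k)}(\Z/q\Z,\mu_q)$ is computed by choosing a lift of the generator $1$ to the middle term $GJ[q]$ and measuring the failure of Galois-equivariance; concretely, since $D_1$ has order $q$ in $J$, a choice of $F$ with $(F)=qD_1$ is exactly a choice of trivialization of the line bundle $\O(qD_1)$, i.e.\ an element of $GJ[q]$ lifting $D_1\in J[q]$ after one interprets $GJ[q]$ via rational functions: the generalized Jacobian point attached to $D_1$ together with a rational function trivializing $qD_1$ away from $\Sigma$. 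The subtlety is that $GJ$ uses the cusp data, so the relevant lift is the class of $(D_1, F|_\Sigma)$ modulo functions regular and nonvanishing on $\Sigma$, and this is where the auxiliary function $h$ with $\operatorname{div}(h)-D_1$ supported away from $\Sigma$ enters, adjusting $F$ to $F/h^q$ so that its restriction to $\Sigma$ is well defined and so that $(F/h^q)$ is the divisor of a function which is a $q$-th power away from $\Sigma$.

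Next I would set up the identification $GJ[q]\cong \{(\mathcal D, f) : \mathcal D \in \Div^0(C)\text{ supported on }\Sigma,\ q\mathcal D=(f)\}/(\text{functions nonvanishing on }\Sigma)^q\cdot k^\times$, or more precisely use the standard presentation of $GJ$ as an extension of $J$ by $T = (\prod_{x\in\Sigma}\Gm)/\Gm$ and compute $T[q]=\mu_q^{\Sigma}/\mu_q$. Under this, the pushout by $D_2=\sum m(x)x : T[q]\to\mu_q$ is the map $(\zeta_x)_x \mapsto \prod_x \zeta_x^{-m(x)}$ (the sign being the standard convention, matching the statement's $-m(x)$). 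Then a Galois cocycle representing the pulled-back-pushed-out extension is obtained as follows: pick the lift of $1\in\Z/q\Z$ to be the class of $(D_1,F)$ adjusted by $h$; for $\sigma\in\Gal(\bar k/k)$, since $D_1$ and $F$ are defined over $k$ the only Galois movement comes from the choice of a $q$-th root, and one gets the cocycle $\sigma\mapsto \sigma((F/h^q)(D_2)^{1/q})/(F/h^q)(D_2)^{1/q}$, which is precisely the Kummer class of $(F/h^q)(-D_2)=\prod_{x\in\Sigma}(F/h^q)(x)^{-m(x)}$. I would check independence of the choices: changing $F$ by a constant changes the symbol by a $q$-th power (since $D_2$ has degree $0$, constants cancel), changing $h$ by a function nonvanishing on $\Sigma$ changes $F/h^q$ by such a function to the $q$-th power, hence the value at $D_2$ by a $q$-th power, and any two $h$'s differ multiplicatively by a function nonvanishing on $\Sigma$ since both $\operatorname{div}(h)-D_1$ and $\operatorname{div}(h')-D_1$ avoid $\Sigma$ and have the same class. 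These verifications are routine once the dictionary is in place.

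The main obstacle I expect is pinning down the precise sign and the precise normalization in the isomorphism $GJ[q]\cong$ (pairs of divisors and functions) compatibly with the connecting map of $0\to T\to GJ\to J\to 0$ and with the chosen isomorphism $T\cong\operatorname{Coker}(\Gm\to\Gm\times\Gm)$ — i.e.\ making sure the pushout map $T[q]\to\mu_q$ attached to $D_2$ really is evaluation-at-$\Sigma$ raised to the $-m(x)$ and not its inverse, and that the lift of $1\in\Z/q\Z$ is $(D_1,F)$ rather than $(-D_1,F^{-1})$. This is a bookkeeping issue about the duality between the generalized Jacobian and the cohomology of $Y$, and I would resolve it by tracking through the Kummer sequence on $C\setminus\Sigma$ versus $C$: the exact sequence $0\to \O(C\setminus\Sigma)^\times/q \to \Div^0_\Sigma(C)/q \to J[q]\to 0$ (Kummer theory for the open curve) is the source of the identification, and chasing $F$ through it gives the class of $(F/h^q)(-D_2)$ on the nose. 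Once the signs are fixed, the rest is formal, and I would present the argument as: (i) recall the open-curve Kummer sequence and identify it with \eqref{Generalses} after pullback/pushout; (ii) write the explicit lift; (iii) compute the cocycle; (iv) dispose of the independence-of-choices checks in a sentence each.
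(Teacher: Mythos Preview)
Your approach is correct and essentially the same as the paper's: lift $D_1$ to an explicit element of $GJ[q](\bar k)$ and read off the Galois cocycle after pushout by $D_2$. The paper's execution sidesteps exactly the sign/normalization bookkeeping you flag as the main obstacle: rather than a ``pairs'' model, it works in the Rosenlicht description $GJ(\bar k)=\Div^0_\Sigma(\bar C)/\{\operatorname{div}(f):f\equiv 1\bmod\Sigma\}$ (divisors supported \emph{away} from $\Sigma$), chooses an auxiliary $\alpha\in K(\bar C)$ with $F/(h^q\alpha^q)\equiv 1\bmod\Sigma$, and takes the lift of $D_1$ to be $D_1-\operatorname{div}(h)-\operatorname{div}(\alpha)$; the cocycle is then visibly $\sigma\mapsto\operatorname{div}(\alpha)-\sigma\operatorname{div}(\alpha)$, and since $\alpha^q|_\Sigma=(F/h^q)|_\Sigma$ the pushout by $D_2$ is immediately the Kummer class of $(F/h^q)(-D_2)$, with no dictionary to set up.
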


\begin{proof}
We will use the following description of $GJ(\bar{k})$:
\[GJ(\bar{k})=\Div^0_{\Sigma}(\bar{C})/\{\text{div}(f): f\in K(\bar{C}), f\equiv 1 \bmod \Sigma\},\]
where $\Div^0_{\Sigma}(\bar{C})$ is the group of degree $0$ divisors away from $\Sigma$.
Choose a rational function $\alpha\in K(\bar{C})$ such that $\frac{F}{h^q\alpha^q}\equiv 1 \bmod \Sigma$, and let $A=\text{div}(\alpha)$. From the assumption, we know that
\[qD_{1}-q\text{div}(h)-qA=0\in GJ(\bar{k}).\]
So we have $D_{1}-\text{div}(h)-A\in GJ[q](\bar{k})$. Note that 
$D_{1}-\text{div}(h)-A=D_{1} \in J[q](\bar{k})$, since $\text{div}(h)$ and $A$ are principal divisors. Hence the extension class is given by 
\[\sigma\mapsto \sigma(D_{1}-\text{div}(h)-A)-(D_{1}-\text{div}(h)-A).\]
Note that $D_{1}-\text{div}(h)$ is $k$-rational, so the extension class is the class of
\[\sigma\mapsto A-\sigma A,\]
which is the Kummer class of $\alpha^{-q}$. Since $\frac{F}{h^q\alpha^q}\equiv 1 \bmod \Sigma$, we have 
\[\frac{F}{h^q}(x)=\alpha^q(x)\,\,\textnormal{for all}\,\,x\in \Sigma.\]
Hence, after pushout by $D_{2}$, the extension class is given by the Kummer class of $(\frac{F}{h^q})(-D_{2})$.
\end{proof}

\begin{thm}\label{Cor 4.2}
The extension class $c$ is the Kummer class of $N$, and the invariant $\tilde{c}$ equals $1$.
\end{thm}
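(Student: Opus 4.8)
The plan is to reduce the computation to Proposition~\ref{Prop 4.5} and then evaluate the resulting Kummer symbol with modular units. By Proposition~\ref{Prop 4.4} we have $c=c'$, the class of the sequence (\ref{cuspidal3}); and by the Remark following Definition~\ref{Def 4.3} the class of (\ref{Generaljacobian2}) is $-c$. So it suffices to compute the class of (\ref{Generaljacobian2}).

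I would apply Proposition~\ref{Prop 4.5} with $C=X_{0}(N)$ over $\Q$, with $\Sigma=\{0,\infty\}$ the set of cusps, with $D_{1}=-\widetilde{r}\widetilde{v}D=-\widetilde{r}\widetilde{v}v\bigl((0)-(\infty)\bigr)$ the divisor representing the image of $1\in\Z/q\Z$ under the map $\Z/q\Z\to J[q]$ of the Corollary preceding Definition~\ref{Def 4.3} (this divisor has order $q$, since $D$ has order $q$ and $\widetilde{r}\widetilde{v}$ is a unit modulo $q$), and with $D_{2}=(\infty)-(0)$ the degree-zero divisor on $\Sigma$ attached to the pushout map $T[q]\to\mu_{q}$ induced by $e=(\infty)-(0)$ (recall $T\cong\Coker(\Gm\to\Gm\times\Gm)$, the two copies of $\Gm$ being indexed by the two cusps). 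Proposition~\ref{Prop 4.5} then identifies the class of (\ref{Generaljacobian2}) with the Kummer class of
\[
\Bigl(\tfrac{F}{h^{q}}\Bigr)(-D_{2})=\frac{(F/h^{q})(0)}{(F/h^{q})(\infty)},
\]
where $F$ is a modular unit with $(F)=qD_{1}$ and $h$ is any rational function with $\operatorname{div}(h)-D_{1}$ supported away from $\{0,\infty\}$.

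The key input is the modular unit $F$: I would take $F=\bigl(\eta(N\tau)/\eta(\tau)\bigr)^{2\widetilde{r}\widetilde{v}m}$, a power of the eta quotient --- equivalently a power of $\Delta(N\tau)/\Delta(\tau)$, which has divisor $(N-1)\bigl((\infty)-(0)\bigr)$ on $X_{0}(N)$ --- and using $\xi=(N-1)/12=n/m$ with $n=vq$ one checks that $(F)=qD_{1}$ exactly. Since $h^{q}$ contributes only $q$-th powers, the auxiliary function $h$ does not affect the Kummer class of $\bigl(\tfrac{F}{h^{q}}\bigr)(-D_{2})$: indeed the class of (\ref{Generaljacobian2}) is in any case a power of $N$ (it lies in $H^{1}(\Z[1/Np],\mu_{q})$, is tamely ramified at $N$ and peu ramifi\'ee at $p$, and unramified elsewhere), while the $h$-factor becomes a $q$-th power over $\Q(\zeta_{N})$ and no nontrivial power of $N$ does; so the class is that of the ratio of the leading coefficients of $F$ at the two cusps. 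These are computed from $\Delta(-1/\tau)=\tau^{12}\Delta(\tau)$ and $\eta(-1/\tau)=\sqrt{-i\tau}\,\eta(\tau)$, together with the cusp widths ($1$ at $\infty$, $N$ at $0$): the leading coefficient is $1$ at $\infty$ and $N^{-\widetilde{r}\widetilde{v}m}$ at $0$. Since $\widetilde{r}\widetilde{v}m\equiv 1\pmod q$ (from $\widetilde{r}\equiv vm^{-1}$ and $\widetilde{v}\equiv v^{-1}$ modulo $q$) and $-1$ is a $q$-th power ($q$ being odd), $\bigl(\tfrac{F}{h^{q}}\bigr)(-D_{2})$ equals $N^{-1}$ modulo $q$-th powers. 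Hence the class of (\ref{Generaljacobian2}) is the Kummer class of $N^{-1}$, and $c$, being its negative, is the Kummer class of $N$.

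Finally, $\widetilde{c}$ is read off from this. By Proposition~\ref{Prop 4.1}(6), $F_{1}=K(N^{1/q})$, and the canonical isomorphism $G_{1}\cong\mu_{q}$ of Lemma~\ref{Lem 4.1}(2) is the Kummer map $\sigma\mapsto\sigma(N^{1/q})/N^{1/q}$. The homomorphism $G_{1}\to\mu_{q}$ cut out by the Kummer class of $N$ is precisely this isomorphism, i.e.\ the identity of $\mu_{q}$; unwinding the identifications of Definition~\ref{Def 4.2}, this says $\widetilde{c}=1$. The main obstacle is the third paragraph: pinning down the precise eta quotient $F$ with divisor $qD_{1}$ and carrying out the evaluation of $F/h^{q}$ at the two cusps with the correct widths and normalizations, so that the contribution of $h$ is visibly trivial in the Kummer group and the surviving factor is exactly $N^{-1}$.
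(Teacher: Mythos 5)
Your proposal is correct and follows essentially the same route as the paper: reduce via Proposition \ref{Prop 4.4} and the remark after Definition \ref{Def 4.3} to the class of (\ref{Generaljacobian2}), apply Proposition \ref{Prop 4.5} with the same $D_1$ and $D_2$, and take $F$ to be the power $m\widetilde{r}\widetilde{v}$ of the eta quotient $(\eta(N\tau)/\eta(\tau))^{2}=(\Delta(N\tau)/\Delta(\tau))^{1/12}$, exactly the paper's $g^{\frac{m}{v}r\widetilde{v}v}$, obtaining $N^{-1}$ for the ratio of leading coefficients and hence the Kummer class of $N$ for $c$. Your extra remarks (why the $h^{q}$ factor is invisible in $H^1(\Z[1/Np],\mu_q)$, and the explicit unwinding of Lemma \ref{Lem 4.1}(2) to get $\tilde{c}=1$) only make explicit what the paper leaves implicit.
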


\begin{proof}
We apply Proposition \ref{Prop 4.5} to the case that the curve is the modular curve $X_{0}(N)$, and $D_{1}=-r\widetilde{v}D=-r\widetilde{v}v((0)-(\infty))$ and $D_{2}=(\infty)-(0)$. We need to find a function $F$ such that
$\text{div}(F)=-qr\widetilde{v}D$. Let $\Delta(z)\in S_{12}(\SL_{2}(\Z),\C)$ be the discriminant function and let $g=(\frac{\Delta(Nz)}{\Delta(z)})^{\tfrac{1}{12}}$. we know that $\text{div}(g^{-1})=\xi((0)-(\infty))$. Hence $\text{div}(g^{\frac{m}{v}r\widetilde{v}v})=-qr\widetilde{v}v((0)-(\infty))$. It is easy to see that $ \frac{m}{v}r\widetilde{v}v\equiv 1\bmod q$. By computation, we have 
\[\frac{g^{\frac{m}{v}r\widetilde{v}v}(0)}{g^{\frac{m}{v}r\widetilde{v}v}(\infty)}=N^{a},\] 
where $a\equiv -1\bmod q$, which means that the extension class (\ref{Generaljacobian2}) is given by the Kummer class of $N^{-1}$. Therefore, the extension class (\ref{cuspidal3}) is given by the Kummer class of $N$.
By the isomorphism of Lemma \ref{Lem 4.1} (2), the invariant $\tilde{c}$ equals $1$.
\end{proof}

\section{The cocycle $\chi_{b}$}

Recall the cocycle $\chi_{b}\in \Hom(G_{-1},P\otimes I/I^2)\cong \Hom(G_{-1},I^2/I^3\otimes \mu_{q}^{-1})$. In this section, we try to give a conjectural inverse map to $\chi_{b}$.

\subsection{\textit{K}-theory of integer rings and Galois cohomology}
We will use the following theorem to identify $K$-groups with \'etale cohomology groups:

\begin{thm}\label{Thm 4.1}
For a number field $F$, let $S$ be a finite set of places containing the places above $p$. Then there is a \'etale Chern character inducing an isomorhism:
\[K_{2}(\mathfrak{o}_{F,S})\otimes\Zp\rightarrow H^2_{\textnormal{\'et}}(\mathfrak{o}_{F,S},\Zp(2)).
\]
\end{thm}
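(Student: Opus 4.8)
The statement to prove is Theorem~\ref{Thm 4.1}: for a number field $F$ and a finite set $S$ of places containing those above $p$, the étale Chern character gives an isomorphism $K_2(\mathfrak{o}_{F,S})\otimes\Z_p\xrightarrow{\sim} H^2_{\text{ét}}(\mathfrak{o}_{F,S},\Z_p(2))$.

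\medskip

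The plan is to deduce this from the Quillen--Lichtenbaum conjecture, now a theorem thanks to the work of Voevodsky, Rost, and collaborators (via the Bloch--Kato conjecture / norm residue isomorphism), together with the comparison between motivic and étale cohomology. First I would recall that for the ring of $S$-integers $\mathfrak{o}_{F,S}$, which has étale cohomological dimension $2$ after inverting $p$ (since $S$ contains the places above $p$ and $F$ is a number field — using that $\mathfrak{o}_{F,S}[1/p] = \mathfrak{o}_{F,S}$ here, or adjusting $S$), the higher Chern class maps assemble into a comparison between algebraic $K$-theory with $\Z_p$-coefficients and étale $K$-theory. Concretely, there is a spectral sequence (the motivic-to-$K$-theory, or Atiyah--Hirzebruch-type, spectral sequence) $E_2^{i,j} = H^{i-j}_{\text{ét}}(\mathfrak{o}_{F,S},\Z_p(-j)) \Rightarrow K_{-i-j}(\mathfrak{o}_{F,S};\Z_p)$, which degenerates in the relevant low-degree range because the cohomological dimension is $\leq 2$. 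Extracting the $K_2$ line, the only contributing term is $H^2_{\text{ét}}(\mathfrak{o}_{F,S},\Z_p(2))$ (the potential $H^0_{\text{ét}}(\mathfrak{o}_{F,S},\Z_p(2))$ and $H^4$ terms vanish for dimension/weight reasons — $H^0(\Z_p(2))=0$ since $F$ has no nontrivial $p$-power roots contributing in weight $2$ beyond torsion that dies, and $\mathrm{cd}_p \leq 2$ kills $H^{\geq 3}$), giving the isomorphism.

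\medskip

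Alternatively, and perhaps more transparently for the reader, I would invoke the finite-coefficient statement directly: by the Quillen--Lichtenbaum conjecture (Voevodsky--Rost, as formulated e.g. by Weibel or Levine), the mod-$p^n$ Chern character $K_2(\mathfrak{o}_{F,S};\Z/p^n) \to H^2_{\text{ét}}(\mathfrak{o}_{F,S},\mu_{p^n}^{\otimes 2})$ is an isomorphism (here one uses that we are well inside the range where motivic cohomology agrees with étale cohomology, namely $K_i$ for $i \geq \mathrm{cd}_p - 1$, and $i = 2 \geq 2 - 1$). Then I would pass to the inverse limit over $n$: the left side gives $K_2(\mathfrak{o}_{F,S};\Z_p) = K_2(\mathfrak{o}_{F,S})\otimes\Z_p$ — using that $K_2$ and $K_3$ of a ring of $S$-integers are finitely generated (Quillen) so there is no $\lim^1$ obstruction and the $\Z/p^n$-coefficient $K$-group stabilizes to $K_2 \otimes \Z_p$ — while the right side gives $H^2_{\text{ét}}(\mathfrak{o}_{F,S},\Z_p(2))$ by definition of continuous étale cohomology (again the relevant $\lim^1$ vanishes since the groups are finite). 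The compatibility of the Chern characters at finite level with the transition maps then yields the claimed isomorphism in the limit.

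\medskip

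The main obstacle — or rather the main point requiring care — is ensuring one is genuinely in the range where the motivic/étale comparison holds: this is exactly the statement that $K_n(\mathfrak{o}_{F,S};\Z/p) \to K^{\text{ét}}_n(\mathfrak{o}_{F,S};\Z/p)$ is an isomorphism for $n \geq 1$, which for $n=2$ sits at the edge of the range governed by $\mathrm{cd}_p(\mathfrak{o}_{F,S}) \leq 2$ and relies essentially on the Bloch--Kato conjecture. I would cite this to the standard references (Voevodsky, Rost--Voevodsky on norm residue; Weibel's survey \cite{WeibelQL}, or the treatments by Geisser--Levine and Dwyer--Friedlander), and note that for $K_2$ in particular the result is older and can be attributed to Tate/Merkurjev--Suslin in the form of the $K_2/p$-to-$H^2(\mu_p^{\otimes 2})$ isomorphism (the Merkurjev--Suslin theorem), so no deep input beyond Merkurjev--Suslin is strictly needed for this single degree. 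The only genuinely arithmetic inputs are the finite generation of $K_2(\mathfrak{o}_{F,S})$ and the finiteness of $H^2_{\text{ét}}(\mathfrak{o}_{F,S},\mu_{p^n}^{\otimes 2})$, both classical, which is why the passage to the limit is harmless.
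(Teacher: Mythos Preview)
Your proposal is correct. The paper itself does not give a proof of this statement at all: it simply writes ``For the proof, one can see \cite{Sou}'' and cites Soul\'e's Bourbaki expos\'e on regulators. So there is no argument in the paper to compare against; the theorem is treated as an input from the literature.

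Your sketch is sound and in fact supplies far more detail than the paper does. One small remark on presentation: invoking the full Quillen--Lichtenbaum/Voevodsky--Rost machinery is anachronistic relative to the paper's citation and is overkill for this particular degree. As you yourself note at the end, for $K_2$ the isomorphism was already known in the 1980s via Tate's surjectivity results together with the Merkurjev--Suslin theorem, and this is precisely what Soul\'e's 1986 survey covers. If you want to match the paper's level of dependency, you could simply cite Soul\'e (or Tate and Merkurjev--Suslin directly) and drop the discussion of the general spectral sequence and the Bloch--Kato conjecture, reserving the more elaborate argument as an aside. Either way, the mathematical content of your argument---finite-level Chern character isomorphism for $K_2$ with $\mu_{p^n}^{\otimes 2}$-coefficients, then passage to the limit using finite generation of $K_2(\mathfrak{o}_{F,S})$ and finiteness of the \'etale cohomology groups---is exactly right.
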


For the proof, one can see \cite{Sou}.

\begin{remark}
We may identify Galois cohomology groups with \'etale cohomology groups: for the details, see \cite{Ma0}.
\end{remark}

\begin{lemma}\label{new1}
We have the following exact sequence:
\[0\rightarrow K_{2}(\Z[\zeta_{N}])\otimes\Zp\rightarrow K_{2}(\Z[\zeta_{N},\tfrac{1}{N}])\otimes\Zp\rightarrow K_1(\F_{N})\otimes\Zp\rightarrow 0.\]
\end{lemma}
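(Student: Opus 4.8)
The plan is to derive this from the localization sequence in algebraic $K$-theory applied to the ring of $S$-integers of $K' = \Q(\zeta_N)$, together with the identification of the relevant $K$-groups via Theorem \ref{Thm 4.1}. First I would invoke the localization (long exact) sequence associated to the open immersion $\Spec \Z[\zeta_N, \tfrac1N] \hookrightarrow \Spec \Z[\zeta_N]$ with closed complement supported at the primes above $N$. Since $N$ is totally ramified in $\Q(\zeta_N)$, there is a unique prime $\mathfrak{n}$ above $N$, with residue field $\F_N$, and the localization sequence reads
\[
\cdots \to K_2(\Z[\zeta_N]) \to K_2(\Z[\zeta_N,\tfrac1N]) \to K_1(\F_N) \to K_1(\Z[\zeta_N]) \to \cdots.
\]
So the content of the lemma, after tensoring with $\Zp$, is exactly the exactness of the three-term piece, i.e. that the boundary map $K_2(\Z[\zeta_N,\tfrac1N])\otimes\Zp \to K_1(\F_N)\otimes\Zp$ is surjective and that $K_2(\Z[\zeta_N])\otimes\Zp$ injects.

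For surjectivity on the right: the cokernel of $K_2(\Z[\zeta_N,\tfrac1N])\otimes\Zp \to K_1(\F_N)\otimes\Zp$ injects into $K_1(\Z[\zeta_N])\otimes\Zp$, so it suffices to show the map $K_1(\F_N)\otimes\Zp \to K_1(\Z[\zeta_N])\otimes\Zp$ is zero. Now $K_1(\F_N) = \F_N^\times$ and $K_1(\Z[\zeta_N]) = \Z[\zeta_N]^\times$; the transfer map $\F_N^\times \to \Z[\zeta_N]^\times$ in the localization sequence, after tensoring with $\Zp$, lands in the $p$-part of the unit group. One checks this map is zero either because its $p$-part factors through the norm-type map and the relevant torsion/rank considerations force it, or more directly because the long exact sequence continues to $K_1(\Z[\zeta_N,\tfrac1N]) = \Z[\zeta_N,\tfrac1N]^\times$ and the valuation at $\mathfrak n$ realizes a splitting: since $N$ is totally ramified, there is an element of $\Z[\zeta_N,\tfrac1N]^\times$ (e.g. a unit times a power of $1-\zeta_N$) with $\mathfrak n$-valuation a unit in $\Zp$, so the map $K_1(\Z[\zeta_N,\tfrac1N])\otimes\Zp \to K_1(\F_N)\otimes\Zp \cong \Z/q\Z$ is already surjective, which by exactness forces $K_1(\F_N)\otimes\Zp \to K_1(\Z[\zeta_N])\otimes\Zp$ to be zero and hence the boundary map on $K_2$ to be surjective. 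For injectivity on the left: the kernel of $K_2(\Z[\zeta_N])\otimes\Zp \to K_2(\Z[\zeta_N,\tfrac1N])\otimes\Zp$ is the image of $K_3(\F_N)\otimes\Zp$; since $\#\F_N^{\times} = N-1$ and $K_3(\F_N) \cong \Z/(N^2-1)\Z$, the $p$-part of $K_3(\F_N)$ is cyclic of order equal to the $p$-part of $N^2-1 = (N-1)(N+1)$, which is $q$ (as $p \nmid N+1$, because $N \equiv 1 \bmod p$ and $p \geq 5$ so $N + 1 \equiv 2 \not\equiv 0$). One then argues this image is zero, for instance by comparing with the analogous localization sequence for $\Z[\zeta_q]$ or by using that the composite $K_3(\F_N)\otimes\Zp \to K_2(\Z[\zeta_N])\otimes\Zp \to K_2(\Z[\zeta_N])\otimes\Zp$ associated to the two primes is compatible with the known structure of $K_2$ of cyclotomic integers (Soulé, Theorem \ref{Thm 4.1}), so that the image lies in a subgroup that must vanish for degree/weight reasons.

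The main obstacle will be the injectivity on the left, i.e. showing the tame symbol / boundary map from $K_3(\F_N)\otimes\Zp$ into $K_2(\Z[\zeta_N])\otimes\Zp$ vanishes — this is not purely formal and genuinely uses arithmetic input about $K_2(\Z[\zeta_N])$. I expect the cleanest route is to translate everything into étale cohomology via Theorem \ref{Thm 4.1}: the sequence becomes the localization sequence in étale cohomology
\[
H^2_{\textnormal{\'et}}(\Z[\zeta_N],\Zp(2)) \to H^2_{\textnormal{\'et}}(\Z[\zeta_N,\tfrac1N],\Zp(2)) \to H^1_{\textnormal{\'et}}(\F_N,\Zp(1)) \to H^3_{\textnormal{\'et}}(\Z[\zeta_N],\Zp(2)),
\]
where $H^1_{\textnormal{\'et}}(\F_N,\Zp(1)) = \F_N^\times \otimes \Zp \cong U$, and then injectivity follows because $H^3_{\textnormal{\'et}}(\Z[\zeta_N],\Zp(2)) = 0$ — this last vanishing holds since $\Z[\zeta_N]$ has $p$-cohomological dimension $2$ after inverting $p$, and the prime-to-$p$ contribution at $\mathfrak n$ (which is away from $p$) contributes only in degrees $\leq 1$ for the local term; more precisely, the Poitou–Tate / Artin–Verdier duality bounds force $H^3$ to vanish. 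Thus in étale-cohomological language the lemma is the exactness of a four-term localization sequence whose outer boundary term is zero, which I would present as the clean proof, relegating the $K$-theoretic phrasing to a remark.
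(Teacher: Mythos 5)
Your overall strategy --- the localization sequence in $K$-theory for $\Spec\Z[\zeta_{N},\tfrac{1}{N}]\hookrightarrow\Spec\Z[\zeta_{N}]$ --- is the same as the paper's, and your treatment of surjectivity is essentially correct: the cokernel of the boundary map is the image of the transfer $K_{1}(\F_{N})\otimes\Zp\to K_{1}(\Z[\zeta_{N}])\otimes\Zp$, which vanishes because the source is the finite $p$-group $\Z/q\Z$ while $\Z[\zeta_{N}]^{\times}\otimes\Zp$ is $\Zp$-free (the torsion of $\Z[\zeta_{N}]^{\times}$ is $\mu_{2N}$, of order prime to $p$); equivalently, as in the paper, $K_{1}(\Z[\zeta_{N}])\otimes\Zp\hookrightarrow K_{1}(\Z[\zeta_{N},\tfrac{1}{N}])\otimes\Zp$. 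Your ``more direct'' variant of this step is garbled, however: the localization sequence contains no map $K_{1}(\Z[\zeta_{N},\tfrac{1}{N}])\to K_{1}(\F_{N})$; the map out of $K_{1}(\Z[\zeta_{N},\tfrac{1}{N}])$ is the valuation landing in $K_{0}(\F_{N})\cong\Z$.

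The genuine gap is in the injectivity step, where you have misindexed the localization sequence. The term whose image is the kernel of $K_{2}(\Z[\zeta_{N}])\to K_{2}(\Z[\zeta_{N},\tfrac{1}{N}])$ is $K_{2}(\F_{N})$, not $K_{3}(\F_{N})$: the transfer maps preserve degree and only the boundary maps drop it. By Quillen's computation $K_{2}(\F_{N})=0$, so injectivity is immediate --- this is exactly the paper's first cited fact, and the ``main obstacle'' you describe does not exist ($K_{3}(\F_{N})$ maps to $K_{3}(\Z[\zeta_{N}])$ and is irrelevant here). Your \'etale-cohomological fallback does not repair this: the vanishing of $H^{3}$ of the total space sits at the right-hand end of your four-term sequence and yields surjectivity onto $H^{1}(\F_{N},\Zp(1))$, not injectivity of the first map; for the latter you would need the vanishing of the other outer term $H^{0}(\F_{N},\Zp(1))$ (which is true, since $\Frob_{N}$ acts on $\Zp(1)$ by multiplication by $N\neq 1$, but you never invoke it). Note also that with $\Zp(2)$-coefficients one must invert $p$, as in Corollary \ref{new2}: $H^{\bullet}_{\textnormal{\'et}}(\Z[\zeta_{N}],\Zp(2))$ with $p$ not inverted is not the object that Soul\'e's theorem identifies with $K_{2}\otimes\Zp$.
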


\begin{proof}
This is from the localization sequence of $K$-groups and the facts that
\begin{enumerate}
    \item $K_{2}(\F_{N})=0$,
    \item $K_{1}(\Z[\zeta_{N}])\otimes\Zp\hookrightarrow K_1(\Z[\zeta_{N},\tfrac{1}{N}])\otimes\Zp$.
\end{enumerate}
\end{proof}

\begin{cor}\label{new2}
Using the Chern class maps of Theorem \ref{Thm 4.1}, we have the following exact sequence in \'etale cohomology:
\[0\rightarrow H^{2}(\Z[\zeta_{N},\tfrac{1}{p}],\Zp(2))\rightarrow H^{2}(\Z[\zeta_{N},\tfrac{1}{Np}],\Zp(2))\rightarrow \F_{N}^{\times}\otimes\Zp\rightarrow 0.\]
\end{cor}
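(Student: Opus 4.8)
Corollary (\ref{new2}): We have an exact sequence $0 \to H^2(\Z[\zeta_N,\tfrac{1}{p}],\Zp(2)) \to H^2(\Z[\zeta_N,\tfrac{1}{Np}],\Zp(2)) \to \F_N^\times \otimes \Zp \to 0$.

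The plan is to transport the $K$-theory localization sequence of Lemma~\ref{new1} across the Chern character isomorphisms of Theorem~\ref{Thm 4.1}, after first checking that inverting $p$ does not change the relevant $K_2$-groups up to $p$-adic completion.

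\textbf{Step 1.} The $G$-theory localization sequence for the open immersion $\Spec\Z[\zeta_N,\tfrac1p]\hookrightarrow\Spec\Z[\zeta_N]$ reads
\[
\bigoplus_{v\mid p}K_2(\kappa(v))\to K_2(\Z[\zeta_N])\to K_2(\Z[\zeta_N,\tfrac1p])\to\bigoplus_{v\mid p}K_1(\kappa(v))\to\cdots,
\]
where $\kappa(v)$ is the residue field at $v$. Since $K_2$ of a finite field vanishes and $K_1(\kappa(v))=\kappa(v)^\times$ is finite of order prime to $p$, tensoring this sequence with the flat $\Z$-algebra $\Zp$ gives a canonical isomorphism $K_2(\Z[\zeta_N])\otimes\Zp\isoto K_2(\Z[\zeta_N,\tfrac1p])\otimes\Zp$. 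The identical argument for $\Spec\Z[\zeta_N,\tfrac1{Np}]\hookrightarrow\Spec\Z[\zeta_N,\tfrac1N]$ gives $K_2(\Z[\zeta_N,\tfrac1N])\otimes\Zp\isoto K_2(\Z[\zeta_N,\tfrac1{Np}])\otimes\Zp$, while $K_1(\F_N)\otimes\Zp=\F_N^\times\otimes\Zp$ tautologically.

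\textbf{Step 2.} Tensoring the exact sequence of Lemma~\ref{new1} with $\Zp$ (flatness again) and substituting the identifications of Step~1 yields
\[
0\to K_2(\Z[\zeta_N,\tfrac1p])\otimes\Zp\to K_2(\Z[\zeta_N,\tfrac1{Np}])\otimes\Zp\to\F_N^\times\otimes\Zp\to 0.
\]
Both $\Z[\zeta_N,\tfrac1p]$ and $\Z[\zeta_N,\tfrac1{Np}]$ have their set of inverted places containing all places above $p$, so Theorem~\ref{Thm 4.1} identifies the first two terms with $H^2(\Z[\zeta_N,\tfrac1p],\Zp(2))$ and $H^2(\Z[\zeta_N,\tfrac1{Np}],\Zp(2))$.

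\textbf{Step 3.} The only real content is the commutativity of the square
\[
\begin{tikzcd}
K_2(\Z[\zeta_N,\tfrac1p])\otimes\Zp \arrow[r]\arrow[d,"\wr"] & K_2(\Z[\zeta_N,\tfrac1{Np}])\otimes\Zp \arrow[d,"\wr"]\\
H^2(\Z[\zeta_N,\tfrac1p],\Zp(2)) \arrow[r] & H^2(\Z[\zeta_N,\tfrac1{Np}],\Zp(2))
\end{tikzcd}
\]
i.e.\ the functoriality of Soulé's \'etale Chern class maps for the open immersion $\Spec\Z[\zeta_N,\tfrac1{Np}]\hookrightarrow\Spec\Z[\zeta_N,\tfrac1p]$, which is part of the construction in \cite{Sou}. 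Granting this, the bottom map is injective with cokernel $\F_N^\times\otimes\Zp$, which is the assertion. I expect this functoriality to be the main (and essentially the only) obstacle. One could instead run the Gysin sequence directly in \'etale cohomology — the closed complement is the single point $\Spec\F_N$ since $N$ is totally ramified in $\Q(\zeta_N)$, purity gives boundary terms $H^\ast(\F_N,\Zp(1))$, and since Frobenius acts on $\Zp(1)$ by the scalar $N\neq 1$ one gets $H^0(\F_N,\Zp(1))=0$ and $H^1(\F_N,\Zp(1))=\Zp/(N-1)\cong\F_N^\times\otimes\Zp$ — but that route additionally requires the vanishing of the connecting map into $H^3(\Z[\zeta_N,\tfrac1p],\Zp(2))$, which the $K$-theory argument via Lemma~\ref{new1} sidesteps entirely.
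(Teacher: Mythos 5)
Your proposal is correct and follows essentially the same route as the paper, which states this corollary as an immediate consequence of Lemma \ref{new1} and the Chern class isomorphisms of Theorem \ref{Thm 4.1}. The only detail you supply that the paper leaves implicit is the standard observation that inverting $p$ does not change $K_2\otimes\Zp$ (since $K_2$ of the residue fields above $p$ vanishes and their $K_1$ has order prime to $p$), which is exactly the right way to reconcile the rings appearing in Lemma \ref{new1} with those in the corollary.
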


Let $G=\Gal(\Q(\zeta_{N})/\Q)$. We have the following lemma:

\begin{lemma}\label{new3}
Via the injection of Corollary \ref{new2}, we have
\[H^{2}(\Z[\zeta_{N},\tfrac{1}{p}],\Zp(2))=I_{G}H^{2}(\Z[\zeta_{N},\tfrac{1}{Np}],\Zp(2)).\]
\end{lemma}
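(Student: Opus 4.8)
The plan is to identify the cokernel of the injection $H^{2}(\Z[\zeta_{N},\tfrac{1}{p}],\Zp(2))\hookrightarrow H^{2}(\Z[\zeta_{N},\tfrac{1}{Np}],\Zp(2))$ as $\F_{N}^{\times}\otimes\Zp$ (by Corollary \ref{new2}) and show that the Hecke/group-ring Eisenstein ideal $I_{G}$ acts on $H^{2}(\Z[\zeta_{N},\tfrac{1}{Np}],\Zp(2))$ in a way that exactly kills this quotient while being surjective onto the subgroup. Here $I_G$ should be the augmentation ideal of $\Zp[G]$ (the natural ``Eisenstein'' ideal in the cyclotomic setting, where $G=\Gal(\Q(\zeta_N)/\Q)\cong \F_N^\times$ acts); the two statements to prove are the inclusion $I_{G}H^{2}(\Z[\zeta_{N},\tfrac{1}{Np}],\Zp(2))\subseteq H^{2}(\Z[\zeta_{N},\tfrac{1}{p}],\Zp(2))$ and the reverse inclusion.

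For the inclusion ``$\subseteq$'': the quotient $\F_{N}^{\times}\otimes\Zp$ is the inertia/localization term at the prime above $N$. Since $N$ splits completely in $\Q(\zeta_N)/\Q$ is false — rather $N$ is totally ramified in $\Q(\zeta_N)/\Q$ — there is a unique prime $\mathfrak{N}$ above $N$, fixed by all of $G$, so $G$ acts trivially on $K_1(\F_N)\otimes\Zp = \F_N^\times\otimes\Zp$ (the residue field at $\mathfrak{N}$ is $\F_N$ and $G$ acts through the ramification, trivially on the residue field). Hence the augmentation ideal $I_G$ annihilates the quotient, which gives $I_{G}H^{2}(\Z[\zeta_{N},\tfrac{1}{Np}],\Zp(2))\subseteq H^{2}(\Z[\zeta_{N},\tfrac{1}{p}],\Zp(2))$. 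I would spell this out by picking $\sigma\in G$ and noting $(\sigma-1)x$ maps to $0$ in $\F_N^\times\otimes\Zp$ for every $x$, so lands in the kernel of the localization map, which is the subgroup in question.

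For the reverse inclusion ``$\supseteq$'': this is the essential point and I expect it to be the main obstacle. One needs that $I_G$ acts surjectively onto $H^{2}(\Z[\zeta_{N},\tfrac{1}{p}],\Zp(2))$, equivalently that the $G$-coinvariants of $H^{2}(\Z[\zeta_{N},\tfrac{1}{p}],\Zp(2))$ vanish, i.e. $H^{2}(\Z[\zeta_{N},\tfrac{1}{p}],\Zp(2))_{G}=0$. This should follow from a norm/corestriction argument together with the structure of $K_2$ (equivalently $H^2(\Z[\tfrac1p],\Zp(2))$) of $\Q$ and its $p$-ramified extensions: the transfer map relates $H^{2}(\Z[\zeta_{N},\tfrac{1}{p}],\Zp(2))_G$ to $H^{2}(\Z[\tfrac{1}{p}],\Zp(2))$, and the latter vanishes because $K_2(\Z[\tfrac1p])$ is finite of order prime to $p$ when $p\geq 5$ (indeed $K_2(\Z)=\Z/2$), so its $p$-part is trivial; one then checks the relevant norm map is an isomorphism onto this group using that $N\equiv 1\bmod p$ is the only ramification and a Nakayama-type argument. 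Actually the cleanest route is probably: combine the long exact localization sequence of Lemma \ref{new1}, the known triviality of the $p$-part of $H^2(\Z[\tfrac1p],\Zp(2))$, and the fact that $G$ has order prime to... no, $p\mid |G|$ since $p\mid N-1$ — so instead I would run the Hochschild–Serre spectral sequence for $\Q(\zeta_N)/\Q$ with coefficients $\Zp(2)$ and use that $H^i(\Q,\Zp(2))$ has trivial $p$-part in the relevant range, forcing $H^{2}(\Z[\zeta_{N},\tfrac{1}{p}],\Zp(2))$ to be generated over $\Zp[G]$ by elements killed by the augmentation ideal only if the coinvariants vanish; the vanishing of coinvariants is what unlocks the surjectivity $I_G H^2(\Z[\zeta_N,\tfrac1{Np}],\Zp(2)) \twoheadrightarrow H^2(\Z[\zeta_N,\tfrac1p],\Zp(2))$. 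I would finish by chasing the localization sequence: given $y\in H^2(\Z[\zeta_N,\tfrac1p],\Zp(2))$, write $y=\sum (\sigma_i-1)z_i$ using coinvariant-vanishing, lift each $z_i$ to $H^2(\Z[\zeta_N,\tfrac1{Np}],\Zp(2))$, and observe the lift of $(\sigma_i-1)z_i$ still lies in the subgroup (by the ``$\subseteq$'' direction), giving the desired equality.
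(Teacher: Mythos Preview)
Your inclusion $I_{G}B\subseteq A$ (with $A=H^{2}(\Z[\zeta_{N},\tfrac{1}{p}],\Zp(2))$, $B=H^{2}(\Z[\zeta_{N},\tfrac{1}{Np}],\Zp(2))$) is fine and matches the paper's implicit first step: $G$ acts trivially on the residue term $\F_N^\times\otimes\Zp$, so the localization map $B\to \F_N^\times\otimes\Zp$ factors through $B_G$.

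The reverse inclusion, however, contains a genuine error. You reduce $A\subseteq I_G B$ to the vanishing $A_G=0$, and then try to get this from a corestriction isomorphism $A_G\cong H^2(\Z[\tfrac1p],\Zp(2))=0$. But $A_G$ is \emph{not} zero: indeed the very next corollary in the paper (Corollary \ref{new4}) computes $A_G\cong U^{\otimes 2}$, which is nontrivial since $p\mid N-1$. The reason your corestriction argument breaks down is exactly the ramification you flagged: $\Q(\zeta_N)/\Q$ is totally ramified at $N$, so $\Spec\Z[\zeta_N,\tfrac1p]\to\Spec\Z[\tfrac1p]$ is not \'etale and there is no Hochschild--Serre identification of $A_G$ with $H^2(\Z[\tfrac1p],\Zp(2))$. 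Your Nakayama/spectral-sequence sketch cannot repair this, because the target statement $A_G=0$ is simply false.

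The paper's fix is to work with $B$ rather than $A$: over $\Z[\tfrac1{Np}]$ the extension \emph{is} \'etale, so (via \cite[Proposition 3.3.11]{NSW}) corestriction gives $B_G\cong H^2(\Z[\tfrac1{Np}],\Zp(2))\cong K_2(\Z[\tfrac1{Np}])\otimes\Zp\cong \F_N^\times\otimes\Zp$. Thus the surjection $B_G\twoheadrightarrow \F_N^\times\otimes\Zp$ is between finite groups of the same order, hence an isomorphism; since $A=\ker(B\to\F_N^\times\otimes\Zp)$ and $I_GB=\ker(B\to B_G)$, equality follows. So the missing idea is to take $G$-coinvariants of the \emph{larger} module $B$, where descent is available, rather than of $A$.
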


\begin{proof}
It suffices to show that the map induced on coinvariants
\[H^{2}(\Z[\zeta_{N},\tfrac{1}{Np}],\Zp(2))_{G}\rightarrow \F_{N}^{\times}\otimes\Zp\]
from the exact sequence of Corollary \ref{new2} is an isomorphism. But \[H^{2}(\Z[\zeta_{N},\tfrac{1}{Np}],\Zp(2))_{G}\cong H^2(\Z[\tfrac{1}{Np}],\Zp(2))\]
by \cite[Proposition 3.3.11]{NSW} and \[H^2(\Z[\tfrac{1}{Np}],\Zp(2))\cong K_{2}(\Z[\tfrac{1}{Np}])\otimes\Zp=\F_{N}^{\times}\otimes\Zp,\] 
so the surjection is an isomorphism by equality of orders.
\end{proof}

\begin{cor}\label{new4}
Fix the isomorphism
\[I_{G}/I_{G}^2\rightarrow G,\]
\[g-1\mapsto g.\]
We have canonical isomorphisms:
\[H^2(\Z[\zeta_{N},\tfrac{1}{p}],\Zp(2))_{G}\cong \frac{I_{G}H^{2}(\Z[\zeta_{N},\tfrac{1}{Np}],\Zp(2))}{I^2_{G}H^{2}(\Z[\zeta_{N},\tfrac{1}{Np}],\Zp(2))}\cong I_{G}/I_{G}^2\otimes\F_{N}^{\times}\otimes\Zp\cong U^{\otimes 2}.\]
\end{cor}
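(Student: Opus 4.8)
The plan is to assemble the three displayed isomorphisms in the corollary one at a time, using the results already proved in this section. The final statement being "the last statement of the excerpt" is Corollary~\ref{new4}, so I want canonical identifications
\[
H^2(\Z[\zeta_{N},\tfrac{1}{p}],\Zp(2))_{G}\ \cong\ \frac{I_{G}H^{2}(\Z[\zeta_{N},\tfrac{1}{Np}],\Zp(2))}{I^2_{G}H^{2}(\Z[\zeta_{N},\tfrac{1}{Np}],\Zp(2))}\ \cong\ I_{G}/I_{G}^2\otimes\F_{N}^{\times}\otimes\Zp\ \cong\ U^{\otimes 2}.
\]
First I would note that the first isomorphism is nothing but Lemma~\ref{new3} applied after reducing modulo $I_G$: by Lemma~\ref{new3}, $H^2(\Z[\zeta_N,\tfrac1p],\Zp(2)) = I_G H^2(\Z[\zeta_N,\tfrac1{Np}],\Zp(2))$ as a submodule of the $G$-module $M:=H^2(\Z[\zeta_N,\tfrac1{Np}],\Zp(2))$, so I must show that $I_G M$, viewed with its residual $G$-action, has $(I_G M)_G = I_G M/I_G^2 M$, which is immediate from the definition of $G$-coinvariants, and then identify $(I_G M)_G$ with $H^2(\Z[\zeta_N,\tfrac1p],\Zp(2))_G$ — but these are literally the same group, so only a compatibility-of-maps check is needed.

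Next I would establish the middle isomorphism $I_G M/I_G^2 M\cong I_G/I_G^2\otimes_{\Zp[G]} M$. The standard fact is that for any $\Zp[G]$-module $M$ one has a natural surjection $I_G/I_G^2\otimes_{\Z} M_G \to I_G M/I_G^2 M$ sending $(g-1)\otimes \bar m \mapsto (g-1)m$; here I would want it to be an isomorphism. For that I would use that, by the exact sequence of Corollary~\ref{new2} and Lemma~\ref{new3}, $M_G\cong \F_N^\times\otimes\Zp$ is a finite cyclic $p$-group (it is the class-field-theory side, $K_2(\Z[\tfrac1{Np}])\otimes\Zp$), and $I_G/I_G^2\cong G$ is also cyclic — the identification $I_G/I_G^2\xrightarrow{\sim}G$, $g-1\mapsto g$, being the one fixed in the statement — so both sides have the same order by the order count already used in the proof of Lemma~\ref{new3}, and the surjection is therefore an isomorphism. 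Finally, the last isomorphism is just substituting the two identifications $I_G/I_G^2\cong G\cong \F_N^\times\otimes\Zp$ (the second being the cyclotomic reciprocity isomorphism $G=\Gal(\Q(\zeta_N)/\Q)\cong\F_N^\times$) together with $M_G\cong\F_N^\times\otimes\Zp$, giving $(\F_N^\times\otimes\Zp)^{\otimes 2}$, which is by definition $U^{\otimes 2}$ since $q$ is the $p$-part of $N-1=\#\F_N^\times$ and hence $\F_N^\times\otimes\Zp=\F_N^\times/(\F_N^\times)^q=U$.

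The main obstacle I expect is verifying that all three isomorphisms are \emph{canonical} and mutually compatible — in particular, that the map $I_G/I_G^2\otimes M_G\to I_G M/I_G^2 M$ is well-defined independent of choices and is a map of the correct functorial type, and that the resulting composite genuinely lands in $U^{\otimes 2}$ with the normalization needed later (this normalization is what feeds into the identification of $\chi_b$ as a character $U^{\otimes 2}\otimes\mu_q^{-1}\to U^{\otimes 2}\otimes\mu_q^{-1}$ in Definition~\ref{Def 4.2}). The ring-theoretic inputs (finiteness, cyclicity, order equalities) are all already in hand from Lemma~\ref{new3} and Theorem~\ref{mazur}(1), so no new hard input is needed; the work is entirely in bookkeeping the maps.
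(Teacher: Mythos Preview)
Your plan is the natural one, and since the paper states Corollary~\ref{new4} without proof, your outline is exactly the kind of argument the reader is expected to supply: Lemma~\ref{new3} gives the first isomorphism (coinvariants of $I_GM$ are $I_GM/I_G^2M$), and the last isomorphism is just the substitution $I_G/I_G^2\cong G\otimes\Zp\cong U$ together with $\F_N^\times\otimes\Zp=U$.

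The gap is in your justification of the \emph{middle} isomorphism. You correctly note that the multiplication map
\[
I_G/I_G^2\otimes_{\Zp}M_G\ \longrightarrow\ I_GM/I_G^2M,\qquad M:=H^2(\Z[\zeta_N,\tfrac1{Np}],\Zp(2)),
\]
is surjective, but you then claim the two sides ``have the same order by the order count already used in the proof of Lemma~\ref{new3}''. That proof only shows $|M_G|=q$; it says nothing about $|I_GM/I_G^2M|$. The surjection gives only the upper bound $|I_GM/I_G^2M|\le q$. For the lower bound you need a genuinely new input. One clean formulation: since $G$ is cyclic with generator $\sigma$, the map $m\mapsto(\sigma-1)m$ induces an isomorphism $I_GM/I_G^2M\cong M_G/\mathrm{im}(M^G\to M_G)$, so the middle map is an isomorphism precisely when every $G$-invariant class in $M$ lies in $\ker(M\to \F_N^\times\otimes\Zp)=I_GM$, i.e.\ has trivial tame residue at the prime above $N$. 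This is true, but it is not contained in Lemma~\ref{new3}; one must either argue via the total ramification of $N$ in $\Q(\zeta_N)/\Q$ (so that residues of restricted classes pick up a factor of $e=N-1\equiv 0\bmod q$) together with a Hochschild--Serre analysis, or simply read off the lower bound $|(I_GM)_G|\ge q$ from the class-field-theoretic argument in Proposition~\ref{Prop 4.6}, which exhibits $G_{-1}$ (of order $q$) as a quotient. Either way, the phrase ``by the order count already used'' is not a valid justification here, and this step needs to be filled in.
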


\begin{prop}\label{Prop 4.6}
We have a canonical isomorphism:
$$H^2(\Z[\zeta_{N},\tfrac{1}{p}],\Zp(2))_{G}\cong G_{-1}\otimes\mu_{q}.$$
\end{prop}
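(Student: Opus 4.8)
The plan is to deduce Proposition \ref{Prop 4.6} by combining Corollary \ref{new4} with the structure of $G_{-1}$ coming from Lemma \ref{Lem 4.1}(3). From Corollary \ref{new4} we already have a canonical isomorphism $H^2(\Z[\zeta_{N},\tfrac{1}{p}],\Zp(2))_{G}\cong U^{\otimes 2}$, where the composite goes through $I_{G}/I_{G}^2\otimes\F_{N}^{\times}\otimes\Zp$. On the other side, Lemma \ref{Lem 4.1}(3) provides $G_{-1}\cong U^{\otimes 2}\otimes\mu_{q}^{-1}$, hence $G_{-1}\otimes\mu_{q}\cong U^{\otimes 2}$. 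So at the level of abstract abelian groups the statement is immediate; the real content is that both identifications with $U^{\otimes 2}$ are \emph{canonical}, so the resulting isomorphism $H^2(\Z[\zeta_{N},\tfrac{1}{p}],\Zp(2))_{G}\cong G_{-1}\otimes\mu_{q}$ is canonical and (implicitly) Galois-equivariant for the $\Gal(K/\Q)$-action, matching the twist $\kappa^{-1}$ that characterizes $F_{-1}$ in Proposition \ref{Prop 4.1}.

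First I would make the right-hand identification explicit: using $I_G/I_G^2\cong G\cong\Gal(\Q(\zeta_N)/\Q)$ (the isomorphism $g-1\mapsto g$ fixed in Corollary \ref{new4}) and the canonical surjection $G\twoheadrightarrow U$ obtained by reducing mod $q$-th powers (note $G\cong\F_N^\times$ and $U=\F_N^\times/(\F_N^\times)^q$), together with the tautological $\F_N^\times\otimes\Zp\cong U$, Corollary \ref{new4} identifies $H^2(\Z[\zeta_N,\tfrac1p],\Zp(2))_G$ with $U\otimes U=U^{\otimes 2}$. Next I would recall from the cited Remark \ref{canonicaliso} (referenced in the proof of Lemma \ref{Lem 4.1}(3)) precisely how $G_{-1}$ is built as a subquotient of $H^2(\Z[\tfrac1p,\zeta_N],\Zp(2))$: the point of the remark preceding the statement of Lemma \ref{Lem 4.1} is that one identifies $\Gal(F_{-1}/K)$ with $H^2(\Z[\tfrac1p,\zeta_{N}],\Zp(2))_{G}$, and the twist by $\mu_q^{-1}$ appears because passing from the Galois group of a class field over $K=\Q(\zeta_q)$ to a $\Gal(K/\Q)$-module governed by $\kappa^{-1}$ forces the Tate twist. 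Tracking this through, one sees $G_{-1}\otimes\mu_q$ is exactly $H^2(\Z[\zeta_N,\tfrac1p],\Zp(2))_G$, not merely abstractly isomorphic to it.

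Concretely, I would: (i) observe that the reciprocity map of class field theory for the extension $F_{-1}/K$ gives $G_{-1}$ as a quotient of a ray class group, which by the $K$-theory/\'etale cohomology dictionary of Theorem \ref{Thm 4.1} and Lemma \ref{new1} is $H^2(\Z[\zeta_N,\tfrac1p],\Zp(2))_G$ up to the cyclotomic twist; (ii) check the twist bookkeeping, i.e.\ that descending from $\Q(\zeta_N)$ to $K$ and isolating the $\kappa^{-1}$-isotypic piece matches tensoring by $\mu_q$ on the cohomological side — this is where the $(2)$ in $\Zp(2)$ versus the $\kappa^{-1}$-action plus one Tate twist reconcile; (iii) conclude the canonical isomorphism. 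The main obstacle is step (ii): making the twist match is a genuine piece of bookkeeping, because one must keep straight three separate $\Gal(\Q(\zeta_N)/\Q)$- and $\Gal(K/\Q)$-actions (on $U$, on $\mu_q$, and on the $H^2$), confirm that the coinvariants functor $(-)_G$ interacts correctly with the identification $I_G/I_G^2\cong G$, and verify that no spurious sign or inverse enters when passing between $\mu_q$ and $\mu_q^{-1}=\Hom(\mu_q,\Z/q\Z)$. Once the twist is pinned down the isomorphism is forced, so I expect the write-up to be short, essentially chaining Corollary \ref{new4} with the definition of $G_{-1}$ and one line reconciling the twist.
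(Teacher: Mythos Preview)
Your proposal has a circularity problem. You plan to deduce Proposition \ref{Prop 4.6} from Corollary \ref{new4} together with Lemma \ref{Lem 4.1}(3), but in the paper's logical structure Lemma \ref{Lem 4.1}(3) is \emph{proved} by pointing to Remark \ref{canonicaliso}, and Remark \ref{canonicaliso} is explicitly the combination of Corollary \ref{new4} with Proposition \ref{Prop 4.6}. So Proposition \ref{Prop 4.6} is the substantive input, not a formal consequence; your first two paragraphs run the implication backwards.

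Your third paragraph tries to supply an independent argument, but it is too vague at the crucial point and does not match what is actually needed. Saying ``the reciprocity map gives $G_{-1}$ as a quotient of a ray class group, which by Theorem \ref{Thm 4.1} and Lemma \ref{new1} is $H^2(\Z[\zeta_N,\tfrac1p],\Zp(2))_G$ up to twist'' skips the real work. The paper's proof does not stay over $K$ with a ray class group; it passes to $\Q(\zeta_{Nq})$, uses the Poitou--Tate type sequence (\cite[Prop.\ 8.3.11]{NSW}) to identify $H^2(\Z[\zeta_{Nq},\tfrac1p],\mu_q)$ with the $p$-split ideal class group $A_{\Q(\zeta_{Nq}),p}$ up to local terms, then takes the $\kappa^{-1}$-eigenquotient to kill those local terms. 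The identification with $G_{-1}$ then comes from checking, via Proposition \ref{Prop 4.1}, that $F_{-1}\Q(\zeta_{Nq})/\Q(\zeta_{Nq})$ is unramified, carries the $\kappa^{-1}$-action, and is completely split at $p$; equality is forced by a size count. None of the ingredients you cite (the Chern character, the $K$-theory localization sequence) produce this link between $H^2$ and an unramified class group, and your ``twist bookkeeping'' step (ii) presupposes that the object has already been identified with a class group on which $\Delta$ acts. That identification is the content of the proposition, and your outline does not supply it.
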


\begin{proof}
Let $\Delta=\Gal(\Q(\zeta_{q})/\Q)$. 
Since $H^2(\Z[\zeta_{N},\frac{1}{p}],\Zp(2))_{G}\cong U^{\otimes 2}\cong \Z/q\Z$ and $\Spec\Z[\zeta_{N},\tfrac{1}{p}]$ has $p$-cohomological dimension $2$ (see \cite[Proposition 8.3.18]{NSW}), we know that
\[
H^2(\Z[\zeta_{N},\tfrac{1}{p}],\Zp(2))_{G}\cong H^2(\Z[\zeta_{N},\tfrac{1}{p}],\mu_{q}^{\otimes 2})_{G}.
\]
Since
\[H^2(\Z[\zeta_{N},\tfrac{1}{p}],\mu_{q}^{\otimes 2})\cong H^2(\Z[\zeta_{Nq},\tfrac{1}{p}],\mu_{q}^{\otimes 2})_{\Delta}\]
by \cite[Proposition 3.3.11]{NSW},
we only need to understand  $H^2(\Z[\zeta_{Nq},\frac{1}{p}],\mu_{q}^{\otimes 2})_{G\times \Delta}$. By \cite[Proposition 8.3.11]{NSW}, we have the following sequence
\[0\rightarrow A_{\Q(\zeta_{Nq}),p}\otimes\Z/q\Z\rightarrow  H^2(\Z[\tfrac{1}{p},\zeta_{Nq}],\mu_{q})\rightarrow \bigoplus_{v\mid p} H^2(\Q(\zeta_{Nq})_{v},\mu_{q})\rightarrow \Z/q\Z\rightarrow 0,\]
where $A_{\Q(\zeta_{Nq}),p}$ is ideal class group of $\Q(\zeta_{Nq})$ modulo the ideal classes $[\p]$, for $\p| p$. Taking the $\kappa^{-1}$-eigenquotient, where $\kappa$ is the modulo $q$ cyclotomic character, since $H^2(\Q(\zeta_{Nq})_{v},\mu_{q}^{\otimes 2})\cong \mu_{q}$ and $\Delta$ does not permute the $v\mid p$, we have $(\bigoplus_{v\mid p} H^2(\Q(\zeta_{Nq})_{v},\mu_{q}^{\otimes 2})_{\Delta}=0$. Thus, we have the following:
\[(A_{\Q(\zeta_{Nq}),p})_{\kappa^{-1}}\otimes \mu_{q}\cong H^2(\Z[\zeta_{Nq},\tfrac{1}{p}],\mu_{q}^{\otimes 2})_{\Delta}.\]
Hence 
\[H^2(\Z[\zeta_{Nq},\tfrac{1}{p}],\mu_{q}^{\otimes 2})_{G\times \Delta}\cong(A_{\Q(\zeta_{Nq}),p})_{\kappa^{-1},G}\otimes \mu_{q}.
\]
Now let $F/\Q(\zeta_{Nq})$ be the field extension that corresponds to $(A_{\Q(\zeta_{Np}),p})_{\kappa^{-1}}$.
Note that by Proposition \ref{Prop 4.1}, we have
\[G_{-1}\cong \Gal(F_{-1}/K),\]
where $F_{-1}$ is tamely ramified at $N$ and split completely at the prime dividing $p$. Hence, $F_{-1}\Q(\zeta_{Nq})$ is unramfied over $\Q(\zeta_{Nq})$ with $\kappa^{-1}$-action and split completely at the primes dividing $p$.  Then, we have  $F_{-1}\Q(\zeta_{Nq})\subset F$, which means that 
$G_{-1}$ is a quotient of $(A_{\Q(\zeta_{Nq}),p})_{\kappa^{-1},G}$, but they have the same size, so we have
\[
(A_{\Q(\zeta_{Nq}),p})_{\kappa^{-1},G}\cong G_{-1}.
\]

\end{proof}

\begin{remark}\label{canonicaliso}
By Corollary \ref{new4} and Proposition \ref{Prop 4.6}, we have a canonical isomorphism:
\[G_{-1}\cong U^{\otimes 2}\otimes\mu_{q}^{-1}.\]
\end{remark}

\subsection{The map from homology to Galois cohomology}
\begin{defn}[Adjusted Manin symbol]\label{Def 4.5}
For $u,v\in \Z/N\Z$ with $(u,v)=1$, we let 
$$[u,v]'=W_{N}[u,v]=\Big\{\frac{-d}{bN},\frac{-c}{aN}\Big\},$$
where $a,b,c,d\in\Z$ satisfy $ad-bc=1,u=c\mod N, v=d\mod N$.

Let $[u,v]^{*}:=W_{N}[u,v]^+=\frac{1}{2}([u,v]'+[u,-v]')$
\end{defn}

\begin{lemma}\label{Lem 4.5}
The diamond operator $\langle j\rangle\,(j\in\F_{N}^{\times})$ acts on $[u,v]'$ as follows:
$$\langle j\rangle [u,v]'=[j^{-1}u,j^{-1}v]'.$$
\end{lemma}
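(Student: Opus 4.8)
The plan is to reduce the statement to a single $2\times 2$ matrix identity together with the standard action of diamond operators on unadjusted Manin symbols. Recall that $[u,v]=\xi(g)=g\{0,\infty\}$, where $g=\left(\begin{smallmatrix} a & b\\ c & d\end{smallmatrix}\right)\in\SL_{2}(\Z)$ has bottom row congruent to $(u,v)\bmod N$, and that $W_{N}$ acts on modular symbols by left multiplication by $w=\left(\begin{smallmatrix} 0 & -1\\ N & 0\end{smallmatrix}\right)$; a one-line computation gives $w\,g\{0,\infty\}=\{-d/(bN),-c/(aN)\}$, which matches Definition~\ref{Def 4.5}, so $[u,v]'=w\,g\{0,\infty\}$. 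The diamond operator $\langle j\rangle$ is realized on modular symbols by left multiplication by a matrix $\sigma_{j}=\left(\begin{smallmatrix}\alpha & \beta\\ \gamma N & \delta\end{smallmatrix}\right)\in\Gamma_{0}(N)$ whose lower-right entry satisfies $\delta\equiv j\bmod N$ (equivalently $\alpha\equiv j^{-1}$); since the bottom row of $\sigma_{j}g$ is then congruent to $(\delta c,\delta d)\equiv(ju,jv)$, one has $\langle j\rangle[u,v]=[ju,jv]$.

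Next I would record the key identity
\[
\begin{pmatrix}\alpha & \beta\\ \gamma N & \delta\end{pmatrix}\begin{pmatrix}0 & -1\\ N & 0\end{pmatrix}=\begin{pmatrix}0 & -1\\ N & 0\end{pmatrix}\begin{pmatrix}\delta & -\gamma\\ -\beta N & \alpha\end{pmatrix},
\]
and observe that $\sigma_{j}':=\left(\begin{smallmatrix}\delta & -\gamma\\ -\beta N & \alpha\end{smallmatrix}\right)$ lies in $\Gamma_{0}(N)$, has determinant $1$, and has lower-right entry $\alpha\equiv j^{-1}\bmod N$, hence represents $\langle j^{-1}\rangle$. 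In operator form this is exactly the familiar relation $\langle j\rangle\circ W_{N}=W_{N}\circ\langle j^{-1}\rangle$ on the modular symbols of $X_{1}(N)$: the Atkin--Lehner involution conjugates a diamond operator to its inverse.

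It then suffices to compute
\[
\langle j\rangle[u,v]'=\langle j\rangle W_{N}[u,v]=W_{N}\langle j^{-1}\rangle[u,v]=W_{N}[j^{-1}u,j^{-1}v]=[j^{-1}u,j^{-1}v]',
\]
using the operator identity of the previous paragraph and the rule $\langle j^{-1}\rangle[u,v]=[j^{-1}u,j^{-1}v]$ from the first paragraph. Equivalently, and without invoking the unadjusted rule, one checks that the bottom row of $\sigma_{j}'g$ is congruent to $(\alpha c,\alpha d)\equiv(j^{-1}u,j^{-1}v)\bmod N$, so that $\langle j\rangle[u,v]'=\sigma_{j}\,w\,g\{0,\infty\}=w\,\sigma_{j}'g\{0,\infty\}=[j^{-1}u,j^{-1}v]'$ directly. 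The only point requiring genuine care --- and the sole potential obstacle --- is fixing once and for all a consistent convention for which entry of $\sigma_{j}$ labels $\langle j\rangle$ and which matrix realizes $W_{N}$ on modular symbols; with those conventions pinned down the lemma is an immediate consequence of the displayed matrix identity, with no substantive difficulty.
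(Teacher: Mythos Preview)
Your proposal is correct and follows essentially the same route as the paper: both arguments reduce to the intertwining relation $\langle j\rangle W_{N}=W_{N}\langle j\rangle^{*}$ (equivalently $\langle j\rangle W_{N}=W_{N}\langle j^{-1}\rangle$), after which the result is immediate from $\langle j^{-1}\rangle[u,v]=[j^{-1}u,j^{-1}v]$. The only difference is cosmetic---the paper invokes this relation in one line, while you verify it by the explicit $2\times 2$ matrix identity $\sigma_{j}w=w\sigma_{j}'$; your caveat about fixing the conventions for $\langle j\rangle$ and $W_{N}$ is well placed but once those are pinned down there is nothing further to check.
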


\begin{proof}
We have
\[\langle j\rangle [u,v]'=\langle j\rangle W_{N}[u,v]=W_{N}\langle j\rangle^{*}[u,v]=[j^{-1}u,j^{-1}v]'.\]
\end{proof}

\begin{thm}[Manin]\label{Thm 4.2}
The relative homology group $H_1(X_{1}(N),\textnormal{cusps},\Zp)^+$ has a representation as a $\Zp[\F_{N}^{\times}]$-module with generators $[u,v]^{*}$ for $u,v\in \F_{N}$ and $(u,v)=(1)$, subject to the following relations:
\begin{enumerate}
    \item $[u,v]^{*}+[-v,u]^{*}=0,$
    \item $[u,v]^{*}=[u,u+v]^{*}+[u+v,v]^{*},$
    \item $[-u,-v]^{*}=[u,v]^{*}=[u,-v]^{*},$
    \item $\langle j\rangle [u,v]^{*}=[j^{-1}u,j^{-1}v]^{*}$ for $j\in\F_{N}^{\times}$.
\end{enumerate}
\end{thm}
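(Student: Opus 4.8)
The plan is to deduce this from the classical presentation of $H_1(X_1(N),\text{cusps};\Z)$ by Manin symbols, then apply the Atkin--Lehner twist, take the $+$-part, and tensor with $\Zp$. For the first step I would invoke the classical description (see the references at the start of this section, e.g.\ \cite{Oh2}): $H_1(X_1(N),\text{cusps};\Z)$ is the $\Z$-module of modular symbols $\{\alpha,\beta\}$ with $\alpha,\beta\in\mathbb P^1(\Q)$, and Manin's fundamental relations present it on the symbols $\xi(g):=g\{0,\infty\}$ indexed by $\Gamma_1(N)\backslash\SL_2(\Z)$, subject to
\[\xi(g)+\xi(gS)=0,\qquad \xi(g)+\xi(gU)+\xi(gU^2)=0,\]
where $S=\left(\begin{smallmatrix}0&-1\\1&0\end{smallmatrix}\right)$ and $U=\left(\begin{smallmatrix}0&-1\\1&-1\end{smallmatrix}\right)$ represent the elements of order $2$ and $3$ of $\PSL_2(\Z)$; since $N\ge5$, $\Gamma_1(N)$ is torsion-free, so no elliptic-point corrections arise and the presentation holds integrally. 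The bottom-row map $\left(\begin{smallmatrix}a&b\\c&d\end{smallmatrix}\right)\mapsto(c\bmod N,d\bmod N)$ identifies $\Gamma_1(N)\backslash\SL_2(\Z)$ with the $(u,v)\in\F_N^2$ with $(u,v)=(1)$, and since $-I$ acts trivially on the upper half-plane one has $[-u,-v]=[u,v]$.

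Next comes the bookkeeping. Right multiplication of the bottom row by $S$ sends $(u,v)\mapsto(v,-u)$, and by $U,U^2$ it sends $(u,v)$ to $(v,-u-v)$ and $(-u-v,u)$; feeding these into the two families and rewriting terms with $[-u,-v]=[u,v]$, the first relation becomes $[u,v]+[-v,u]=0$ and the second becomes $[u,v]=[u,u+v]+[u+v,v]$. Applying the Atkin--Lehner involution $W_N$, an automorphism of $H_1$, transports the whole presentation to the twisted symbols $[u,v]'=W_N[u,v]$, which by Definition \ref{Def 4.5} are exactly $\{-d/(bN),-c/(aN)\}$ with $ad-bc=1$, $c\equiv u$, $d\equiv v$; Lemma \ref{Lem 4.5} then gives the diamond action $\langle j\rangle[u,v]'=[j^{-1}u,j^{-1}v]'$.

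Finally I would pass to $\Zp$-coefficients and to the $+$-eigenspace. The exact sequence $0\to H_1(X_1(N);\Z)\to H_1(X_1(N),\text{cusps};\Z)\to\widetilde H_0(\text{cusps};\Z)\to0$ has free outer terms, so the relative homology is $\Z$-free and $H_1(X_1(N),\text{cusps};\Zp)$ is its base change, carrying the same presentation. Since $p$ is odd, complex conjugation $\iota$ splits it into $H_1^+\oplus H_1^-$ via the idempotent $\tfrac{1+\iota}{2}$; $\iota$ is induced by $\left(\begin{smallmatrix}-1&0\\0&1\end{smallmatrix}\right)$, which after conjugation by $W_N$ acts as $[u,v]'\mapsto[u,-v]'$, so $[u,v]^*=\tfrac12([u,v]'+[u,-v]')$ is the image of $[u,v]'$ in $H_1^+=H_1/(1-\iota)H_1$. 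Thus $H_1^+$ is presented by the $[u,v]^*$ modulo the images of the two families above and the relations $(1-\iota)[u,v]'=0$, i.e.\ $[u,v]^*=[u,-v]^*$; collecting these gives exactly relations (1)--(3), and (4) records the residual $\F_N^\times$-action coming from Lemma \ref{Lem 4.5} after averaging.

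The hard part, in a fully self-contained treatment, would be the classical Manin presentation itself (the Farey-symbol / continued-fraction argument), which I would rather cite than reprove. Granting it, the single step that needs genuine care is the complex-conjugation computation: one must check that after the $W_N$-twist the involution acts precisely as $[u,v]'\mapsto+[u,-v]'$, with the correct sign, so that passing to the $+$-part introduces exactly the one extra relation $[u,v]^*=[u,-v]^*$ and nothing further. The rest --- the bottom-row computations, freeness of the relative homology, flatness of $\Zp$, and the averaging description of a $+$-part when $2$ is invertible --- is routine.
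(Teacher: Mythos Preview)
The paper does not supply its own proof of this theorem: it is stated as a classical result attributed to Manin and immediately followed by Remark~\ref{Rem 4.2}. So there is nothing to compare against on the paper's side beyond the implicit reference to the literature (Manin's original paper \cite{Man}, or the treatments in \cite{Ste}, \cite{Sha1}).

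Your derivation is the standard one and is correct. A few small comments. Your bottom-row computations for $S$, $U$, $U^2$ are right, and the passage from the three-term $U$-relation to relation~(2) via the $S$-relation and $[-u,-v]=[u,v]$ is done correctly. The identification $\iota[u,v]=[u,-v]$ on ordinary Manin symbols can be seen directly: if $g=\left(\begin{smallmatrix}a&b\\c&d\end{smallmatrix}\right)$ lifts $(u,v)$ then $\iota\{b/d,a/c\}=\{-b/d,-a/c\}$ is $g'\{0,\infty\}$ for $g'=\left(\begin{smallmatrix}-a&b\\c&-d\end{smallmatrix}\right)\in\SL_2(\Z)$, with bottom row $(u,-v)$. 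Since $\epsilon W_N=-W_N\epsilon$ in $\GL_2$, the involutions commute on homology, so $\iota[u,v]'=[u,-v]'$ as you claim; this is exactly consistent with the paper's Definition~\ref{Def 4.5} of $[u,v]^*$. Your phrasing ``after conjugation by $W_N$'' is slightly imprecise---the real point is commutation of $\iota$ with $W_N$, not conjugation---but the conclusion is right. The remaining steps (freeness of relative homology, flat base change to $\Zp$, splitting off the $+$-part via the idempotent since $p$ is odd) are, as you say, routine.
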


\begin{remark}\label{Rem 4.2}
We have the following isomorphisms via Poincar\'e duality
\[
H^1(X_{1}(N)(\C),\Zp)\cong H_1(X_{1}(N)(\C),\Zp),
\]
\[H^1(Y_{1}(N)(\C),\Zp)\cong H_1(X_{1}(N)(\C),\textnormal{cusps},\Zp).
\]
Note that these isomorphisms are not Hecke compatible, they transfer from the adjoint $T(l)^{*}$- action to the $T(l)$-action. For the details, one can see \cite[Proposition 3.5]{Sha1}.
\end{remark}

\begin{remark}\label{Rem 4.3}
If we identify $H^1(X_{1}(N)(\C),\Zp)$ (resp. $H^1(Y_{1}(N)(\C),\Zp)$) with $H_{\textnormal{\'et}}^1(X_{1}(N))$ (resp. $H_{\textnormal{\'et}}^1(Y_{1}(N))$), Poincar\'e duality also changes the Galois action. In fact we have an isomorphism
\[
H_{\textnormal{\'et}}^1(X_{1}(N))(1)\cong H_1(X_{1}(N)(C),\Zp),
\]
which respects complex conjugation. For the details, one can see \cite[Section 3.5]{Sha1}.
\end{remark}

\begin{lemma}\label{Lem 4.6}
Let $\tilde{C_{0}}$ denote the cusps of $X_{1}(N)(\C)$ that lie above the $0$ cusp in $X_{0}(N)$, and let $\tilde{C}_{\infty}$ denote the cusps of $X_{1}(N)(\C)$ that lie above the $\infty$ cusp in $X_{0}(N)$. The
Manin symbols $[u,v]\,\,(u\ne 0, v\ne 0)$ generate the relative homology group $H_1(X_{1}(N),\tilde{C_{0}},\Zp)$, and Manin symbols $[u,v]'$ generate the relative homology group $H_1(X_{1}(N),\tilde{C}_{\infty},\Zp)$. 
\end{lemma}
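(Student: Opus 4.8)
The plan is to reduce everything to the standard theory of Manin symbols for $X_1(N)$ together with the relationship between the $\Gamma_1(N)$-cusps and the two cusps $0,\infty$ of $X_0(N)$. Recall that for the full modular curve, a Manin symbol $[u,v]$ with $(u,v)=(1)$ in $\Z/N\Z$ is the image of the geodesic path in $\mathfrak{H}^{*}$ joining the two cusps $g\cdot 0$ and $g\cdot\infty$, where $g=\begin{pmatrix} * & * \\ u & v\end{pmatrix}\in\SL_2(\Z)$ (well-defined modulo $\Gamma_1(N)$ on the relative homology). Manin's presentation (Theorem \ref{Thm 4.2}, in its relative-to-all-cusps version) already says that these symbols span $H_1(X_1(N),\text{cusps},\Zp)$. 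The content of the lemma is that imposing $u\neq 0$, resp. working with $[u,v]' = W_N[u,v]$, suffices to generate the smaller relative homology groups $H_1(X_1(N),\tilde C_0,\Zp)$ and $H_1(X_1(N),\tilde C_\infty,\Zp)$.

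First I would pin down which endpoints the symbols have. A Manin symbol $[u,v]$ is a path from the cusp represented by $g\cdot 0 = b/d$ (where $g=\begin{pmatrix} a & b \\ c & d\end{pmatrix}$, so $(c,d)\equiv(u,v)$) to $g\cdot\infty = a/c$. The cusp $a/c$ of $X_1(N)$ lies above the cusp $\infty$ of $X_0(N)$ exactly when $c\equiv 0\bmod N$, i.e. $u=0$; dually $b/d$ lies above $0\in X_0(N)$ exactly when $d\equiv 0\bmod N$, i.e. $v=0$. Hence: the condition $u\neq 0$ and $v\neq 0$ forces \emph{both} endpoints of $[u,v]$ to be away from $\tilde C_0$ and $\tilde C_\infty$; such a path is in particular a relative cycle in $H_1(X_1(N),\tilde C_0,\Zp)$. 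Then I would invoke the long exact sequence of the pair together with Manin's spanning result: in the commutative diagram relating $H_1(X_1(N),\text{cusps})$ and $H_1(X_1(N),\tilde C_0)$, the boundary of a general Manin symbol $[u,v]$ lands in the free module on the cusps, and the cusps \emph{not} in $\tilde C_0$ are precisely those of the form $g\cdot\infty$; using the three-term relations of Theorem \ref{Thm 4.2} one rewrites an arbitrary $[u,v]$ modulo the subgroup generated by the $[u',v']$ with $u'\neq 0,\,v'\neq0$ plus boundaries supported on $\tilde C_0$. A clean way: observe that the $\Z p$-span of the $[u,v]$ with $u\neq0,v\neq0$ surjects onto $H_1(Y_1(N),\Zp)$ (the non-vanishing conditions are exactly the cuspidal avoidance in the punctured curve $Y_1(N)=X_1(N)\setminus\{\text{cusps}\}$ combined with the fact that the only cusps we are allowed to meet in $H_1(X_1(N),\tilde C_0)$ beyond those of $Y_1(N)$ are in $\tilde C_\infty$), and then the failure to generate all of $H_1(X_1(N),\tilde C_0)$ is controlled by $\tilde C_\infty$, on which the relations (1)–(3) act transitively enough to express the needed relative classes. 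For the second assertion I would apply $W_N$: since $W_N$ is an isomorphism of $X_1(N)$ swapping the fibers over $0$ and $\infty$ in $X_0(N)$, it carries $H_1(X_1(N),\tilde C_0,\Zp)$ isomorphically onto $H_1(X_1(N),\tilde C_\infty,\Zp)$ and carries the generating set $\{[u,v]:u\neq0,v\neq0\}$ to $\{[u,v]'=W_N[u,v]:u\neq0,v\neq0\}$, and one checks the $W_N$-image of a symbol with $u=0$ or $v=0$ is a symbol meeting $\tilde C_\infty$, which is allowed; so in fact \emph{all} $[u,v]'$ generate $H_1(X_1(N),\tilde C_\infty,\Zp)$ as claimed.

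The main obstacle I anticipate is the bookkeeping of exactly which cusps a given Manin symbol touches and making the long-exact-sequence reduction rigorous — in particular, verifying that after adjoining boundaries supported on the allowed set of cusps, the three-term (and the $\pm$) relations of Theorem \ref{Thm 4.2} are enough to kill every symbol with $u=0$ in $H_1(X_1(N),\tilde C_0,\Zp)$ (and, for the $[u,v]'$ statement, that no extra relations are needed at all). This is a purely combinatorial argument about the action of $\SL_2(\Z)$ on $\mathbb{P}^1(\Z/N\Z)$ and its orbits on cusps, and it is essentially the $\Gamma_1(N)$-analogue of Manin's original continued-fraction argument; I would either reference the treatment in \cite{Sha1} (cf.\ the discussion around Remark \ref{Rem 4.2}) or spell out the short continued-fraction manipulation showing any $[u,v]$ with $v=0$, say, equals a telescoping sum of $[u_i,v_i]$ with all $u_i,v_i\neq0$ plus a symbol with an endpoint at a cusp lying over $0\in X_0(N)$, which vanishes in $H_1(X_1(N),\tilde C_0,\Zp)$.
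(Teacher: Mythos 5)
The paper itself states this lemma without proof (it is imported from the Manin--Sharifi--Fukaya--Kato circle of results), so I am judging your proposal on its own terms. Your overall strategy --- identify the endpoints of each Manin symbol, use the long exact sequence of the pair to characterize $H_1(X_1(N),\tilde C_0,\Zp)$ as the kernel of the boundary map to $\Zp[\tilde C_\infty]$, and transfer the second statement from the first via $W_N$ --- is the right one. But the cusp bookkeeping, which you yourself flag as the delicate point, is wrong in a way that breaks the logic. With $g=\begin{pmatrix} a& b\\ c& d\end{pmatrix}$ and $(c,d)\equiv(u,v)$, the endpoint $g\cdot 0=b/d$ lies over $\infty\in X_0(N)$ (not over $0$) exactly when $N\mid d$, i.e.\ $v=0$; there is no ``duality'' flipping $0$ and $\infty$ between the two endpoints. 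Hence $u\neq 0$ and $v\neq 0$ force \emph{both} endpoints of $[u,v]$ to lie \emph{in} $\tilde C_0$ --- they cannot be ``away from $\tilde C_0$ and $\tilde C_\infty$,'' since every cusp of $X_1(N)$ lies over $0$ or $\infty$ --- and that containment, i.e.\ $\partial[u,v]$ being supported on $\tilde C_0$, is precisely why $[u,v]$ defines a class in $H_1(X_1(N),\tilde C_0,\Zp)$. Even granting your premises, your stated conclusion would make $[u,v]$ fail to be a relative cycle rel $\tilde C_0$. Relatedly, your closing claim that \emph{all} $[u,v]'$ generate $H_1(X_1(N),\tilde C_\infty,\Zp)$ is false as literally stated: $[0,v]'=W_N[0,v]$ has one endpoint over $0$, so it is not even a relative cycle rel $\tilde C_\infty$. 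The restriction $u\neq 0$, $v\neq 0$ must be read into both halves of the lemma; with it, the second half does follow from the first by applying $W_N$, which is the one step of your argument that is sound.

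The more substantive gap is that the actual content of the lemma --- that the symbols $[u,v]$ with $u,v\neq 0$ span \emph{all} of $\ker\bigl(\partial\colon H_1(X_1(N),\mathrm{cusps},\Zp)\to\Zp[\tilde C_\infty]\bigr)$, not merely land in it --- is never carried out. The ``clean way'' you sketch does not hold together: there is no surjection from the span of these relative classes onto $H_1(Y_1(N),\Zp)$ (the natural maps go the other way), and the cusps not in $\tilde C_0$ are not ``those of the form $g\cdot\infty$'' (every cusp has that form). What is needed is the combinatorial step: reduce via $[0,v]=-[-v,0]$ so that the quotient of $H_1(X_1(N),\mathrm{cusps},\Zp)$ by the span of the $[u,v]$ with $u,v\neq 0$ is generated by the classes of the $[u,0]$, compute their boundaries in $\Zp[\tilde C_\infty]$, and check that the resulting map onto $H_0(\mathrm{cusps},\tilde C_0;\Zp)\cong\Zp[\tilde C_\infty]$ is injective on that quotient. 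That verification (or an explicit citation to where it is done, e.g.\ \cite{Sha1} or \cite{FK}) is the proof; without it the proposal only establishes one inclusion.
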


\begin{prop}[Sharifi]\label{Prop 4.8}
There exists a homomorphism
\[\varpi_{N}^{0}:H_{1}(X_{1}(N),\tilde{C}_{\infty},\Zp)^{+}\rightarrow H^2(\Z[\zeta_{N},\tfrac{1}{Np}],\Zp(2))^{+}.\]
\[[u,v]^{*}\mapsto (1-\zeta_{N}^u,1-\zeta_{N}^v)^{+}.
\]
\end{prop}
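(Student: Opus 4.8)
The plan is to construct the map $\varpi_N^0$ on the level of relative homology by sending a Manin symbol $[u,v]$ (with $u,v\neq 0$) to the cup product of cyclotomic units $(1-\zeta_N^u,1-\zeta_N^v)$ in $H^2(\Z[\zeta_N,\tfrac1{Np}],\Zp(2))$, and then to verify that this assignment respects the defining relations of $H_1(X_1(N),\tilde C_\infty,\Zp)^+$ from Theorem~\ref{Thm 4.2} (adapted to the generators $[u,v]^*$ via Lemma~\ref{Lem 4.6}). First I would recall that for $u\not\equiv 0\bmod N$ the element $1-\zeta_N^u$ is a unit in $\Z[\zeta_N,\tfrac1N]$ (it becomes a unit after inverting $N$), so the symbol $(1-\zeta_N^u,1-\zeta_N^v)$ is a well-defined class in $H^1(\Z[\zeta_N,\tfrac1{Np}],\Zp(1))^{\otimes 2}$, and its image under cup product lies in $H^2(\Z[\zeta_N,\tfrac1{Np}],\Zp(2))$. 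Passing to the $+$-eigenspace for complex conjugation is harmless since cup product is equivariant; this gives a well-defined map on the free module on the symbols $[u,v]^*$.

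The substance is checking the Manin relations. The relation $[u,v]^*+[-v,u]^*=0$ and the involution relations $[-u,-v]^*=[u,v]^*=[u,-v]^*$ reduce, after using graded-commutativity and bilinearity of the cup product and the identity $1-\zeta_N^{-u}=-\zeta_N^{-u}(1-\zeta_N^u)$ together with the fact that $(\zeta_N^a, x)$-type contributions vanish or are controlled (here the precise bookkeeping with roots of unity in the first slot is what makes the $+$-part clean), to statements about cup products of units that hold in $K_2$. The three-term relation $[u,v]^*=[u,u+v]^*+[u+v,v]^*$ is the one that requires the Steinberg relation: one needs
\[
(1-\zeta_N^u,1-\zeta_N^v)=(1-\zeta_N^u,1-\zeta_N^{u+v})+(1-\zeta_N^{u+v},1-\zeta_N^v)
\]
in $H^2(\Z[\zeta_N,\tfrac1{Np}],\Zp(2))$, which one proves by writing a suitable product of $1-\zeta_N^?$ terms equal to something whose symbol vanishes by the Steinberg relation $(x,1-x)=0$ — this is exactly the classical computation underlying Sharifi's construction and I would cite \cite{Sha1} (Proposition 4.8 / Section 4 there) for the detailed identity, or reproduce the short algebraic manipulation with the cyclotomic units. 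The diamond-operator compatibility $\langle j\rangle[u,v]^*=[j^{-1}u,j^{-1}v]^*$ matches, on the Galois side, the action of $\Gal(\Q(\zeta_N)/\Q)$ on the $\zeta_N^u$: applying $\sigma_j\colon \zeta_N\mapsto\zeta_N^{j^{-1}}$ (or its inverse, depending on normalization) sends $(1-\zeta_N^u,1-\zeta_N^v)$ to $(1-\zeta_N^{j^{-1}u},1-\zeta_N^{j^{-1}v})$, so the map is equivariant for the identification of $\Delta=\F_N^\times$ with $\Gal(\Q(\zeta_N)/\Q)$.

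The main obstacle is the three-term (Steinberg) relation: one must produce the correct auxiliary cyclotomic unit and verify the identity of symbols modulo the ideal generated by Steinberg relations, being careful that all terms $1-\zeta_N^?$ involved really are $S$-units for $S=\{v\mid Np\}$ and that the manipulation does not secretly require inverting a prime other than $N$ and $p$. Once the four relations are checked the universal property of the presentation in Theorem~\ref{Thm 4.2} (via Lemma~\ref{Lem 4.6}) gives the homomorphism, and restricting to $+$-parts yields $\varpi_N^0$ as stated. I would also remark that the target can be replaced by $H^2(\Z[\zeta_N,\tfrac1{Np}],\Zp(2))^+$ without loss, and that this map is the $r=0$, tame-level-$N$, trivial-character instance of Sharifi's $\varpi^0_{M_r}$ from Conjecture~\ref{Con 1.1}, so it is consistent with the framework recalled earlier in the introduction.
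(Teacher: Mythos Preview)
The paper does not actually supply a proof of this proposition: it is stated with the attribution ``[Sharifi]'' and implicitly refers the reader to \cite{Sha1} (and the closely related computations in \cite{BA}). So there is no in-paper argument to compare against beyond that citation.

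Your outline is the correct one and is exactly how the result is established in \cite{Sha1} and \cite{BA}: one defines the map on the generators $[u,v]^{*}$ and checks the Manin relations of Theorem~\ref{Thm 4.2}. The two-term relation follows from graded-commutativity of cup product, the sign/involution relations $[u,-v]^{*}=[u,v]^{*}=[-u,-v]^{*}$ are handled by the identity $1-\zeta_N^{-u}=-\zeta_N^{-u}(1-\zeta_N^u)$ together with passage to the $+$-eigenspace for complex conjugation, and the diamond compatibility is Galois equivariance. The only genuinely nontrivial step, as you correctly isolate, is the three-term relation, which reduces to a Steinberg-type identity among cyclotomic $S$-units; this is precisely the computation carried out in \cite{BA} and \cite[\S4]{Sha1}. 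Your caution about verifying that every $1-\zeta_N^{?}$ appearing in the manipulation is an $S$-unit for $S=\{v\mid Np\}$ is well placed but easily dispatched, since all exponents are nonzero modulo $N$. In short, your proposal is correct and matches the literature the paper is citing; the paper itself adds nothing beyond the reference.
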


\begin{defn}\label{Def 4.6}
Since $H_{1}(X_{1}(N),\Zp)^{+}\subset H_{1}(X_{1}(N),\tilde{C}_{\infty},\Zp)^{+}$, via restriction, we have the following homomorphism:
$$\varpi_{N}:H_{1}(X_{1}(N),\Zp)^{+}\rightarrow H^2(\Z[\zeta_{N},\tfrac{1}{Np}],\Zp(2))^{+}.$$
\end{defn}

\begin{defn}(Eisenstein ideal for $X_{1}(N)$)\label{Def 4.7}
Let $\mathfrak{I}_{0}$ be the ideal of $\End_{\Zp}(H_1(X_{1}(N),\tilde{C}_{\infty},\Zp))$ generated by ${T(l)-l-\langle l\rangle,\,\,\textnormal{for}\,\, l\neq N,\,\,\textnormal{and}\,\,T(N)-N }$, and let $I_{0}$ be the image of $\mathfrak{I}_{0}$ in $\End_{\Zp}(H_1(X_{1}(N),\Zp))$.
Let $\mathfrak{I}_{\infty}$ be the ideal of $\End_{\Zp}(H_1(X_{1}(N),\tilde{C}_{\infty},\Zp))$ generated by ${T(l)-1-l\langle l\rangle,\,\,\textnormal{for}\,\,l\neq N,\,\,\textnormal{and}\,\,T(N)-1 }$, and let $I_{\infty}$ be the image of $\mathfrak{I}_{\infty}$ in $\End_{\Zp}(H_1(X_{1}(N),\Zp))$.
\end{defn}

\begin{lemma}\label{Lem 4.7}
We have $\mathfrak{I}_{0} H_1(X_{1}(N),\tilde{C}_{\infty},\Zp)\subset H_1(X_{1}(N),\Zp)$.
\end{lemma}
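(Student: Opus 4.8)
The statement is that the Eisenstein operators generating $\mathfrak{I}_0$ carry the relative homology $H_1(X_1(N),\tilde C_\infty,\Zp)$ into the absolute homology $H_1(X_1(N),\Zp)$. The plan is to reduce this to a computation on the boundary. First I would recall the long exact sequence of the pair
\[
0\rightarrow H_1(X_1(N),\Zp)\rightarrow H_1(X_1(N),\tilde C_\infty,\Zp)\xrightarrow{\partial} \widetilde H_0(\tilde C_\infty,\Zp)\rightarrow 0,
\]
the surjectivity and left-exactness coming from the fact that $X_1(N)$ is connected and $H_1$ of a finite set of cusps vanishes; here $\widetilde H_0$ denotes the reduced $0$-th homology, the kernel of the degree map on the free module on $\tilde C_\infty$. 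Thus proving the lemma is equivalent to showing that each generator $T(l)-l-\langle l\rangle$ (for $l\neq N$) and $T(N)-N$ annihilates $\widetilde H_0(\tilde C_\infty,\Zp)$, i.e.\ acts trivially after composing with $\partial$.

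The second step is to identify the Hecke action on the boundary $\tilde C_\infty$. The cusps of $X_1(N)$ lying above the $\infty$-cusp of $X_0(N)$ form a single $\F_N^\times/\{\pm1\}$-orbit under the diamond operators, and on this orbit the classical formulas for $T(l)$ on cusps (see \cite{Oh2} or the references on modular curves listed at the start of Section 2) show that $T(l)$ acts on the divisor group of $\tilde C_\infty$ as $1+l\langle l\rangle$ for $l\neq N$ — with the $1$ coming from the unramified-at-$\infty$ part of the Hecke correspondence and $l\langle l\rangle$ from the other branch, twisted by the diamond operator because moving the level structure along the $l$-isogeny multiplies it by $l$. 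Likewise $T(N)$ acts as $1$ on $\tilde C_\infty$ since the $N$-isogeny degenerates appropriately. Hence $T(l)-1-l\langle l\rangle$ kills the full (unreduced) $H_0(\tilde C_\infty,\Zp)$, and in particular $T(l)-l-\langle l\rangle = (T(l)-1-l\langle l\rangle) + (1 - l\langle l\rangle) - (l\langle l\rangle - l\langle l\rangle)\ldots$ — more carefully, I would note that on the \emph{reduced} homology $\widetilde H_0$ the operator $\langle l\rangle$ acts as a permutation of the orbit fixing the augmentation, so $1+l\langle l\rangle$ and $l+\langle l\rangle$ differ by $(1-l)(1-\langle l\rangle)$, which kills $\widetilde H_0$ because $1-\langle l\rangle$ lands in the augmentation-zero submodule only after one application and $\ldots$ in fact the cleanest route is: on $\widetilde H_0(\tilde C_\infty,\Zp)$ we have $T(l)=1+l\langle l\rangle$, so $T(l)-l-\langle l\rangle=(1+l\langle l\rangle)-(l+\langle l\rangle)=(1-l)(1-\langle l\rangle)$, and since $\tilde C_\infty$ is a principal homogeneous space for $\F_N^\times/\{\pm1\}$ whose order is prime to... no — rather, one checks directly that $(1-\langle l\rangle)$ is nilpotent modulo nothing but that $(1-l)(1-\langle l\rangle)$ annihilates $\widetilde H_0$ by an explicit calculation on the canonical basis. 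Then $\partial\circ(T(l)-l-\langle l\rangle)=0$, so the image of $T(l)-l-\langle l\rangle$ lies in $\ker\partial = H_1(X_1(N),\Zp)$, and similarly for $T(N)-N$ using $T(N)=1$ on the boundary together with $\ldots$ — here one needs $N\equiv 1$ in the relevant sense; since $N\equiv 1\bmod p$ this is automatic $p$-adically, giving $T(N)-N\equiv 0$ on $\widetilde H_0\otimes\Zp$.

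The main obstacle I anticipate is pinning down the precise Hecke action on the cusps $\tilde C_\infty$, including the correct diamond-operator twist and the correct normalization of $T(N)$: the literature has several conventions (contravariant vs.\ covariant, $T(l)$ vs.\ $T(l)^*$), and because Remark \ref{Rem 4.2} warns that Poincar\'e duality interchanges $T(l)$ and $T(l)^*$, one must be scrupulous about whether the $\langle l\rangle$ appears with $l$ or $1$. I would settle this by working on the explicit Manin-symbol model of Lemma \ref{Lem 4.6}: the boundary map sends $[u,v]'$ to (the class of) the cusp determined by $v \bmod N$, and Lemma \ref{Lem 4.5} gives $\langle j\rangle[u,v]'=[j^{-1}u,j^{-1}v]'$, so one can compute $\partial(T(l)[u,v]')$ directly from the Hecke action on Manin symbols and read off the identity $\partial\circ(T(l)-l-\langle l\rangle)=0$ on the nose. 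The absolute-homology containment for $T(N)-N$ then follows from the analogous, slightly degenerate, formula for $T(N)$ on $[u,v]'$, using $p\mid N-1$ only insofar as it is not even needed for this particular inclusion (the statement is integral, not $\bmod\ p$). This routine but convention-sensitive bookkeeping is the crux; once it is done the lemma is immediate.
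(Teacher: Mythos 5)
Your overall strategy --- pass to the boundary via the exact sequence of the pair and show that the generators of $\mathfrak{I}_{0}$ annihilate $\bigoplus^{0}_{\tilde{C}_{\infty}}\Zp$ --- is exactly the paper's (one-line) argument, which simply asserts that $\mathfrak{I}_{0}$ kills $\tilde{C}_{\infty}$ and cites \cite{Oh2} for the Hecke action on cusps. But your key computation is wrong: you assert that $T(l)$ acts on divisors supported on $\tilde{C}_{\infty}$ as $1+l\langle l\rangle$ and that $T(N)$ acts as $1$. Those are the formulas for the cusps $\tilde{C}_{0}$ over the $0$-cusp (the $N$-gon cusps). On the cusps over $\infty$ (the $1$-gon cusps, i.e.\ Tate curves with a point of order $N$ in $\mu_{N}$), the Hecke correspondence gives $T(l)=l+\langle l\rangle^{\pm1}$ and $T(N)=N$: over such a cusp the fibre of the degeneracy map consists of the subgroup $\mu_{l}$ with multiplicity $1$, whose quotient moves the level structure by a diamond operator, together with the ``\'etale'' subgroups, which coalesce into a single cusp of multiplicity $l$ mapping back to the same cusp. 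This is precisely why it is $\mathfrak{I}_{0}$, and not $\mathfrak{I}_{\infty}$, that kills $\tilde{C}_{\infty}$.

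Your attempts to patch the incorrect formula are themselves false, and with them the lemma would fail. If $T(l)$ were $1+l\langle l\rangle$ on the boundary, then $T(l)-l-\langle l\rangle=(1-l)(1-\langle l\rangle)$ there, and this does \emph{not} annihilate $\bigoplus^{0}_{\tilde{C}_{\infty}}\Zp$: identifying that module with the augmentation ideal of $\Zp[(\Z/N\Z)^{\times}/\{\pm1\}]$, multiplication by $1-\langle l\rangle$ is injective on it (only multiples of the norm element are killed by $1-\langle l\rangle$, and the norm element has nonzero augmentation), so $(1-l)(1-\langle l\rangle)$ is nonzero whenever $l\not\equiv 1$. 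Likewise, if $T(N)$ were $1$ on the boundary, then $T(N)-N$ would act as multiplication by $1-N$ on a free $\Zp$-module; $N\equiv 1\bmod p$ makes $1-N$ divisible by $p$, not equal to $0$ in $\Zp$, so the image would not lie in $\ker\partial=H_{1}(X_{1}(N),\Zp)$. The lemma is an exact integral statement and requires the exact annihilation $T(l)=l+\langle l\rangle$, $T(N)=N$ on $\tilde{C}_{\infty}$; once those formulas are in hand (from \cite{Oh2}, Section 1.2, or from the Tate-curve computation sketched above), the proof is immediate and is the paper's.
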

\begin{proof}
One can check that $\mathfrak{I}_{0}$ kills $\tilde{C}_{\infty}$. For the Hecke action on cusps, one can see \cite[Section 1.2]{Oh2}.
\end{proof}

\subsection{Topological boundary and arithmetic boundary}
In this section, follow the ideal in \cite[Secion 5.3]{FK}, we compare the following three exact sequences:
\begin{equation}\label{Boundary1}
  0\rightarrow H^2(\Z[\tfrac{1}{p},\zeta_{N}^+],\Zp(2))\rightarrow H^2(\Z[\tfrac{1}{Np},\zeta_{N}^+],\Zp(2))\xrightarrow{\partial} H^2(\Q(\zeta_{N}^+)_{N},\Zp(2))\rightarrow 0,
\end{equation}
where $\Q(\zeta_{N}^+)_{N}$ is the completion of $\Q(\zeta_{N}^+)$ at the place dividing $N$,
\begin{equation}\label{Boundary2}
0\rightarrow K_{2}(\Z[\zeta_{N}^+])\otimes\Zp\rightarrow K_{2}(\Z[\zeta_{N}^+,\tfrac{1}{N}])\otimes\Zp\xrightarrow{\partial}K_{1}(\F_{N})\otimes \Zp\rightarrow 0,
\end{equation}
\begin{equation}\label{Boundary3}
    0\rightarrow  H_{1}(X_{1}(N),\Zp)^+\rightarrow H_{1}(X_{1}(N),\tilde{C}_{\infty},\Zp)^+\xrightarrow{\partial'}\sideset{^{}_{}}{^0_{}}
\bigoplus_{\widetilde{C}_{\infty}}\Zp\rightarrow 0,
\end{equation}
where $\bigoplus^0$ denotes the elements which sum to zero. 

\begin{remark}\label{Rem 4.4}
We identify sequence (\ref{Boundary1}) with the sequence (\ref{Boundary2}) via the \'etale chern class map.
\end{remark}

\begin{prop}\label{Prop 4.9}
In sequence (\ref{Boundary1}), the residue map $\partial$ satisfies
\[
\partial(1-\zeta_{N}^u,1-\zeta_{N}^v)^+=\frac{u}{v}\in\F_{N}^{\times}\otimes\Zp.
\]
\end{prop}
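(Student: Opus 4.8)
The plan is to identify the residue map $\partial$ with a tame symbol and then to compute it explicitly on cyclotomic units. First, via the \'etale Chern class isomorphism of Theorem \ref{Thm 4.1} and the identification of Remark \ref{Rem 4.4}, the sequence (\ref{Boundary1}) becomes the $K$-theory localization sequence (\ref{Boundary2}), under which $\partial$ is the tame symbol $K_2\to K_1(\F_N)=\F_N^\times$ (tensored with $\Zp$) at the unique prime above $N$; on classes built from Kummer theory this is the classical formula
\[
\partial\bigl((f)\cup(g)\bigr)=(-1)^{v(f)v(g)}\,\overline{f^{v(g)}/g^{v(f)}}\ \in\ \F_N^\times\otimes\Zp,
\]
where $v$ is the valuation at that prime and $(f)$ denotes the Kummer class of $f$. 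Recall also that $(1-\zeta_N^u,1-\zeta_N^v)^+$ is by definition the $+$-eigenprojection $\tfrac{1+c}{2}\bigl((1-\zeta_N^u)\cup(1-\zeta_N^v)\bigr)$ of the cup product of Kummer classes (Proposition \ref{Prop 4.8}). Since $N$ is totally ramified in $\Q(\zeta_N)/\Q$, complex conjugation $c$ lies in the inertia group at the prime above $N$, hence acts trivially on the residue field $\F_N$ and on $H^1(\F_N,\Zp(1))=\F_N^\times\otimes\Zp$; thus $\partial$ respects the $\pm$-decomposition, and it suffices to compute $\partial$ on $(1-\zeta_N^u)\cup(1-\zeta_N^v)$ and on its conjugate $(1-\zeta_N^{-u})\cup(1-\zeta_N^{-v})$.

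Next I would invoke two elementary facts at the prime $\lambda\mid N$ of $\Z[\zeta_N]$: for $u\not\equiv 0\bmod N$ the element $1-\zeta_N^u$ is a uniformizer (from $N=\prod_{u=1}^{N-1}(1-\zeta_N^u)$ and the fact that the factors are Galois conjugate, so each has valuation $1$), and $\tfrac{1-\zeta_N^u}{1-\zeta_N}=1+\zeta_N+\cdots+\zeta_N^{u-1}\equiv u\pmod\lambda$. Putting $v(1-\zeta_N^u)=v(1-\zeta_N^v)=1$ into the tame symbol formula gives $\partial\bigl((1-\zeta_N^u)\cup(1-\zeta_N^v)\bigr)=-\,\overline{(1-\zeta_N^u)/(1-\zeta_N^v)}=-u/v$, and since $-1$ has order $2$, prime to $p$, it maps to the identity in the $p$-group $\F_N^\times\otimes\Zp$; so the residue is $u/v$. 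For the conjugate, $1-\zeta_N^{-u}=-\zeta_N^{-u}(1-\zeta_N^u)$ is again a uniformizer with $\tfrac{1-\zeta_N^{-u}}{1-\zeta_N}\equiv -u\pmod\lambda$, so the same computation returns $-\tfrac{-u}{-v}=u/v$. Averaging over $c$, $\partial\bigl((1-\zeta_N^u,1-\zeta_N^v)^+\bigr)=u/v$, which is the claim.

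I expect the main obstacle to lie entirely in the first step: one has to check, with correct signs, that the boundary map of the localization sequence in \'etale cohomology with $\Zp(2)$-coefficients really is the tame symbol, and to pin down the identification $H^2(\Q(\zeta_N^+)_{N},\Zp(2))\cong\F_N^\times\otimes\Zp$ used in (\ref{Boundary1}) together with its compatibility with the residue over $\Q(\zeta_N)$ employed above — the two local fields share the residue field $\F_N$, so this reduces to matching normalizations. This bookkeeping is carried out in \cite[Section 5.3]{FK}, which I would follow; granted it, the cyclotomic-unit calculation above finishes the proof.
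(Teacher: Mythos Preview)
Your proposal is correct and follows essentially the same route as the paper: both invoke the tame symbol formula to get $\partial(1-\zeta_N^u,1-\zeta_N^v)=-\,\overline{(1-\zeta_N^u)/(1-\zeta_N^v)}\equiv -u/v$ and then observe that $-1$ vanishes in $\F_N^\times\otimes\Zp$. Your version is more detailed than the paper's two-line argument---you justify the identification of $\partial$ with the tame symbol, spell out the reduction $(1-\zeta_N^u)/(1-\zeta_N^v)\equiv u/v\pmod\lambda$, and explicitly handle the $+$-projection via the triviality of complex conjugation on $\F_N$, all of which the paper leaves implicit.
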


\begin{proof}
By the definition of tame symbol, we have
\[\partial(1-\zeta_{N}^u,1-\zeta_{N}^v)=(-1)\frac{1-\zeta_{N}^u}{1-\zeta_{N}^v}\equiv -\frac{u}{v}\in\F_{N}^{\times}\otimes\Zp.\]
Note that $-1$ is trivial in $\F_{N}^{\times}\otimes\Zp$.
\end{proof}

\begin{prop}\label{Prop 4.10}
In sequence (\ref{Boundary3}), the boundary map $\partial'$ satisfies
\[\partial'([u,v]^{*})=\Big(\frac{-c}{Na}\Big)-\Big(\frac{-d}{Nb}\Big),
\]
where $a,b,c,d\in\Z$ with $ad-bc=1$, and $(u,v)\equiv (c,d)\bmod N$.
\end{prop}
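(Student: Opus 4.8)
The plan is to compute the boundary map $\partial'$ directly on the level of explicit cycles, using the definition of $[u,v]^{*}$ as a modular symbol and the known description of the cusps of $X_1(N)$. Recall from Definition \ref{Def 4.5} that $[u,v]^{*} = W_N[u,v]^{+} = \tfrac12\big([u,v]' + [u,-v]'\big)$, where $[u,v]' = \{\tfrac{-d}{bN}, \tfrac{-c}{aN}\}$ for a choice of $a,b,c,d \in \Z$ with $ad - bc = 1$ and $(u,v) \equiv (c,d) \bmod N$. The boundary of a modular symbol $\{x, y\}$ in the relative homology group is, by definition, the formal difference $(y) - (x)$ of the endpoint cusps. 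So the key point is simply to identify, in the cusp group $\bigoplus^0_{\tilde C_\infty} \Zp$, the classes of the cusps $\tfrac{-d}{bN}$ and $\tfrac{-c}{aN}$ of $X_1(N)$, and then observe that the symmetrization $[u,v]' \mapsto [u,v]^{*}$ does not change the answer because the relevant endpoints of $[u,-v]'$ land in $\tilde C_0$ (or are identified with the same classes in $\tilde C_\infty$).

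The steps, in order, are as follows. First I would recall from Lemma \ref{Lem 4.6} that the symbols $[u,v]'$ generate $H_1(X_1(N), \tilde C_\infty, \Zp)$, so that $\partial'$ takes values in the free group on $\tilde C_\infty$; this means only the endpoints of $\{\tfrac{-d}{bN}, \tfrac{-c}{aN}\}$ that lie above the $\infty$-cusp of $X_0(N)$ contribute, and the others are killed in the quotient $H_1(X_1(N), \tilde C_\infty, \Zp)$. Second, using the standard classification of cusps of $X_1(N)$ in terms of pairs $\pm(\text{numerator}, \text{denominator}) \bmod N$ (see e.g.\ \cite{DI}, \cite{Oh2}), I would identify the cusp $\tfrac{-d}{bN}$ with the point labelled by the pair coming from $(-d, bN) \equiv (-d, 0) \bmod N$, i.e.\ a cusp above $\infty$, and likewise $\tfrac{-c}{aN}$ with the cusp labelled by $(-c, aN) \equiv (-c, 0)$; write these as $\big(\tfrac{-c}{Na}\big)$ and $\big(\tfrac{-d}{Nb}\big)$ respectively in the notation of the statement. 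Third, I would take $\partial'$ of $\{\tfrac{-d}{bN}, \tfrac{-c}{aN}\}$, obtaining $\big(\tfrac{-c}{Na}\big) - \big(\tfrac{-d}{Nb}\big)$. Fourth, I would check that passing from $[u,v]'$ to $[u,v]^{*} = \tfrac12([u,v]' + [u,-v]')$ leaves this unchanged: the symbol $[u,-v]'$ has endpoints $\tfrac{-d'}{b'N}$ and $\tfrac{-c'}{a'N}$ for a matrix representing $(c,-d) \bmod N$, and one checks these endpoints either coincide (in $\bigoplus_{\tilde C_\infty}$) with the corresponding endpoints of $[u,v]'$ via the identification $(x, 0) \sim (-x,0)$ of labels modulo $\pm 1$, or lie in $\tilde C_0$ and hence vanish, so that $\partial'[u,-v]' = \partial'[u,v]'$ and the factor $\tfrac12$ disappears.

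The main obstacle will be step four together with the careful bookkeeping of step two: one must pin down exactly how the affine cusp $\tfrac{-d}{bN} \in \mathbf{P}^1(\Q)$ of $X_1(N)$ corresponds to a symbol $\big(\tfrac{-c}{Na}\big)$ in the stated parametrization of $\tilde C_\infty$, being consistent about which of $(\text{num}, \text{den})$ or $(\text{den}, \text{num})$ one uses and about the $\pm 1$ ambiguity, and then verify that the symmetrization does not introduce a spurious factor of $\tfrac12$ or an extra boundary term. This is purely a matter of unwinding the definitions of the $\Gamma_1(N)$-cusp classes and the Atkin--Lehner twist built into $[u,v]'$, with no hard input beyond the standard theory of modular symbols; I expect it to be a short but somewhat delicate computation, essentially parallel to the corresponding computation in \cite[Section 5.3]{FK}.
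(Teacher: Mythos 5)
Your proposal is correct and takes the same route as the paper, whose proof is simply that the claim is immediate from the definitions: one reads off $\partial'\{\tfrac{-d}{bN},\tfrac{-c}{aN}\}=(\tfrac{-c}{aN})-(\tfrac{-d}{bN})$ and notes both endpoints lie in $\tilde C_\infty$. In your step four the first of your two alternatives is the one that occurs --- the endpoints of $[u,-v]'$ are $\tfrac{d}{bN}$ and $\tfrac{c}{aN}$, which coincide with those of $[u,v]'$ via the $\pm1$ identification of cusp labels $(x,0)\sim(-x,0)$ --- so the factor $\tfrac12$ cancels as you expect.
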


\begin{proof}
It is almost immediate from the definition.
\end{proof}

\begin{defn}\label{Def 4.8}
Let $s=\tfrac{c}{a}$ be an element of $\Q\cup \{\infty\}$ with $\gcd(a,c)=1$ and $N\mid a$. We define a map $t$ as follows:
\[
t: \Zp[\widetilde{C}_{\infty}]\rightarrow \F_{N}^{\times}\otimes\Zp,
\]
\[
t(s)=c\in \F_{N}^{\times}\otimes\Zp.
\]
\end{defn}

\begin{remark}
The map $t$ is well defined. One can check this by using \cite[Proposition 3.8.3]{DS}.
\end{remark}

\begin{prop}
The following diagram commutes:
\begin{equation}\label{tamediagram}
\begin{tikzcd}
H_1(X_{1}(N),\tilde{C}_{\infty},\Zp)\ar[r,"\partial"]\ar[d,"\varpi^{0}"]& \underset{\widetilde{C}_{\infty}}{\bigoplus^0}\,\,\Zp\ar[d,"t"]\\
K_{2}(\Z[\zeta_{N}^+, \frac{1}{N}])\otimes\Zp\ar[r,"\partial'"]& \F_{N}^{\times}\otimes\Zp.
\end{tikzcd}
\end{equation}
\end{prop}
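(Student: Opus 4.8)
The plan is to verify commutativity by tracing a generator $[u,v]^{*}$ around both paths of the square and checking the two images agree in $\F_{N}^{\times}\otimes\Zp$. By Theorem~\ref{Thm 4.2} the group $H_1(X_{1}(N),\tilde{C}_{\infty},\Zp)$ is generated over $\Zp[\F_N^\times]$ by the adjusted Manin symbols $[u,v]^{*}$, so it suffices to check the equality on these generators; since all maps in the diagram are $\Zp$-linear (indeed $\Zp[\F_N^\times]$-equivariant, by Lemma~\ref{Lem 4.5} together with the Galois-equivariance of the tame symbol), this is enough.

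First I would go around via the top-right corner: apply $\partial'$ (Proposition~\ref{Prop 4.10}) to get $\partial'([u,v]^{*})=\bigl(\tfrac{-c}{Na}\bigr)-\bigl(\tfrac{-d}{Nb}\bigr)$, where $a,b,c,d\in\Z$ satisfy $ad-bc=1$ and $(u,v)\equiv(c,d)\bmod N$; then apply the map $t$ of Definition~\ref{Def 4.8} to each cusp. Writing $s_1=\tfrac{-c}{Na}$ and $s_2=\tfrac{-d}{Nb}$ in lowest terms with $N$ dividing the denominator, $t$ returns the numerator read in $\F_N^\times\otimes\Zp$; one gets $t(\partial'([u,v]^{*}))=(-c)(-d)^{-1}=\tfrac{c}{d}\equiv\tfrac{u}{v}\in\F_N^\times\otimes\Zp$ (the sign $-1$ is a square hence trivial after $\otimes\Zp$, $p$ odd). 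Here one must be slightly careful that $\gcd(c, Na)=1$ and $\gcd(d,Nb)=1$ so that the fractions are genuinely reduced and $t$ is applied to the correct numerator; this follows from $ad-bc=1$, which forces $\gcd(a,c)=\gcd(b,d)=1$, together with $N\nmid c,d$ (as $(u,v)=(1)$ in $\F_N$, so at least one of $u,v$ is a unit — and in fact for the adjusted symbols $u\neq 0\neq v$, cf.\ Lemma~\ref{Lem 4.6}).

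Next I would go around via the bottom-left corner: apply $\varpi^0$ (Proposition~\ref{Prop 4.8}), sending $[u,v]^{*}$ to the cup product $(1-\zeta_N^u,1-\zeta_N^v)^+$ in $K_2(\Z[\zeta_N^+,\tfrac1N])\otimes\Zp$; then apply the tame residue $\partial'$ at the prime above $N$. By Proposition~\ref{Prop 4.9} (using Remark~\ref{Rem 4.4} to pass between the $K$-theoretic and étale descriptions of the bottom row) this equals $\tfrac{u}{v}\in\F_N^\times\otimes\Zp$. Comparing, the two paths both yield $\tfrac{u}{v}$, so the diagram commutes. The only real point requiring attention — the "hard part," such as it is — is the bookkeeping in the top-right path: matching the choice of lift $a,b,c,d$ used in Proposition~\ref{Prop 4.10} with the normalization in Definition~\ref{Def 4.8} of $t$, and confirming that the various fractions are in lowest terms so that $t$ extracts the intended residue; once that is pinned down the verification is a direct computation. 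I would also remark that the identification of the columns with honest arrows requires $\varpi^0$ to land in the $+$-part, which is built into Proposition~\ref{Prop 4.8}, and that $\bigoplus^0$ is preserved because $\partial'$ of a relative cycle has total degree zero.
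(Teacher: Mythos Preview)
Your proposal is correct and follows essentially the same route as the paper's proof: check commutativity on the generating adjusted Manin symbols $[u,v]^{*}$ by computing both composites and invoking Propositions~\ref{Prop 4.9} and~\ref{Prop 4.10} to see that each path yields $\tfrac{u}{v}\in\F_N^\times\otimes\Zp$. Your write-up is more explicit about why checking on generators suffices and about the coprimality bookkeeping needed to apply $t$ correctly, but the argument is the same.
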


\begin{proof}
Using Proposition \ref{Prop 4.10}, we know that
\[t(\partial([u,v]^{*}))=\frac{u}{v}.\]
And using Proposition \ref{Prop 4.9} and the definition of $\varpi_{N}^0$, we know that
\[\partial'\varpi_{N}^{0}([u,v]^{*})=\frac{c}{d}= \frac{u}{v}\in\F_{N}^{\times}\otimes\Zp.\]
\end{proof}

So we have the following proposition.
\begin{prop}\label{Prop 4.11}
The image of $\varpi_{N}$ is in $H^2(\Z[\zeta_{N},\frac{1}{p}],\Zp(2))^{+}$ or $K_{2}(\Z[\zeta_{N}^+])\otimes\Zp$.
\end{prop}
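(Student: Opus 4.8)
The statement to prove is Proposition \ref{Prop 4.11}: that the image of $\varpi_{N}$ lands inside $H^2(\Z[\zeta_{N},\tfrac{1}{p}],\Zp(2))^{+}$ (equivalently, $K_2(\Z[\zeta_N^+])\otimes\Zp$) rather than only in the larger group $H^2(\Z[\zeta_N,\tfrac{1}{Np}],\Zp(2))^{+}$. The plan is to use the commutative diagram \parref{tamediagram} just established, which relates the arithmetic boundary (residue/tame symbol) map $\partial$ on $K$-theory to the topological boundary map $\partial'$ on relative homology via the maps $\varpi_N^0$ and $t$. The key point is that $H^2(\Z[\zeta_N^+,\tfrac{1}{N}],\Zp(2))$ surjects onto $K_1(\F_N)\otimes\Zp = \F_N^\times\otimes\Zp$ via $\partial$, with kernel exactly $K_2(\Z[\zeta_N^+])\otimes\Zp$ (sequence \parref{Boundary2}/\parref{Boundary1}), so it suffices to show that $\partial$ kills the image of $\varpi_N$.

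First I would recall that $\varpi_N$ is the restriction of $\varpi_N^0$ to the absolute homology $H_1(X_1(N),\Zp)^{+}\subset H_1(X_1(N),\tilde C_\infty,\Zp)^{+}$, and that the absolute homology is exactly the kernel of the topological boundary map $\partial'$ in sequence \parref{Boundary3}. So for any class $x\in H_1(X_1(N),\Zp)^{+}$ we have $\partial'(x)=0$. By the commutativity of \parref{tamediagram}, $\partial(\varpi_N^0(x)) = t(\partial'(x)) = t(0) = 0$. Hence $\varpi_N^0(x)$ lies in $\ker\partial$, which by the exactness of \parref{Boundary2} (transported to \'etale cohomology via Remark \parref{Rem 4.4}) is precisely $K_2(\Z[\zeta_N^+])\otimes\Zp \cong H^2(\Z[\zeta_N^+,\tfrac{1}{p}],\Zp(2))$. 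One small bookkeeping point: the diagram \parref{tamediagram} as drawn involves the plus part (eigenspace under complex conjugation) on both sides — the target of $\varpi_N^0$ there is written $K_2(\Z[\zeta_N^+,\tfrac{1}{N}])\otimes\Zp$, which already reflects that the plus part of $H^2(\Z[\zeta_N,\cdots])$ is identified with $K_2$ of the real subfield — so the conclusion is automatically stated for $H^2(\Z[\zeta_N,\tfrac{1}{p}],\Zp(2))^{+}\cong K_2(\Z[\zeta_N^+])\otimes\Zp$.

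I do not expect any serious obstacle here: the proposition is essentially a formal consequence of the commutative square plus the characterization of absolute homology as $\ker\partial'$. The only thing one must be careful about is matching up the various identifications — that the vertical map $\varpi^0$ in \parref{tamediagram} restricted to $\ker\partial'$ genuinely takes values in $\ker\partial$, and that the bottom row of \parref{tamediagram} is the localization sequence \parref{Boundary2} whose kernel term is $K_2(\Z[\zeta_N^+])\otimes\Zp$. Both are already recorded in the excerpt (Lemma \parref{new1}/Corollary \parref{new2} give the analogous sequence, and \parref{Boundary2} is its real-subfield version). So the proof is short: invoke $H_1(X_1(N),\Zp)^{+}=\ker\partial'$, chase the square, and conclude via exactness of \parref{Boundary2}.
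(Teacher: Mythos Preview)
Your proposal is correct and is essentially identical to the paper's own argument: the paper simply observes that $\varpi_{N}$ is the induced map on kernels of the horizontal boundary maps in the commutative diagram \parref{tamediagram}, which is exactly the diagram chase you spell out.
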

\begin{proof}
This is the map on kernels of the horizontal maps in the commutative diagram (\ref{tamediagram}).
\end{proof}

\begin{remark}\label{Rem 4.5}
By Remark \ref{Rem 4.3}, we may also view $\varpi$ as a map:
$$\varpi_{N}: H^1(X_{1}(N),\Zp)^{-}(1)\rightarrow H^2(\Z[\tfrac{1}{p},\zeta_{N}],\Zp(2))^{+}.$$
\end{remark}

\subsection{Sharifi's conjecture}
In this section, we list several conjectures related to Sharifi's conjectures.
\begin{conj}[Sharifi]\label{Con 4.1}
The map $\varpi_{N}^{0}$ satisfies 
\[
\varpi_{N}^0(\eta x)=0
\]
for all $\eta\in \mathfrak{I}_{\infty}$ and $x\in H_1(X_{1}(N),\tilde{C}_{\infty},\Zp)$.
\end{conj}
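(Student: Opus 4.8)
The plan is to verify the vanishing on module generators. By Manin's presentation (Theorem \ref{Thm 4.2}) and Lemma \ref{Lem 4.6}, the $\Zp$-module $H_1(X_1(N),\tilde C_\infty,\Zp)^+$ is generated by the symbols $[u,v]^*$ (equivalently by the $[u,v]'$), and $\mathfrak I_\infty$ is generated by the operators $T(\ell)-1-\ell\langle\ell\rangle$ for primes $\ell\neq N$ together with $T(N)-1$ (Definition \ref{Def 4.7}). So it suffices to prove
\[
\varpi_N^0\big((T(\ell)-1-\ell\langle\ell\rangle)\,[u,v]^*\big)=0
\qquad\text{and}\qquad
\varpi_N^0\big((T(N)-1)\,[u,v]^*\big)=0
\]
in $H^2(\Z[\zeta_N,\tfrac1{Np}],\Zp(2))^+$ for every admissible pair $(u,v)$ and every prime $\ell\neq N$.

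\textbf{The approach: Hecke-equivariance of $\varpi_N^0$.} I would follow the strategy of \cite[Section 5.3]{FK}. First write out the action of the Hecke operators on Manin symbols through Merel's / Heilbronn-matrix formulas, and apply $\varpi_N^0$ via $[u,v]^*\mapsto(1-\zeta_N^u,1-\zeta_N^v)^+$ (Proposition \ref{Prop 4.8}); this turns each term into an explicit finite sum of cyclotomic-unit cup products in $H^2(\Z[\zeta_N,\tfrac1{Np}],\Zp(2))^+$. Using Galois-equivariance of the cup product together with the identity $\varpi_N^0(\langle j\rangle[u,v]^*)=(1-\zeta_N^{j^{-1}u},1-\zeta_N^{j^{-1}v})^+$ (Theorem \ref{Thm 4.2}(4), Lemma \ref{Lem 4.5}), which says that the diamond operator $\langle j\rangle$ corresponds on the target to the automorphism $\sigma_j^{-1}$ of $\Z[\zeta_N,\tfrac1{Np}]$ with $\sigma_j(\zeta_N)=\zeta_N^j$, the desired vanishing becomes equivalent to the Hecke-equivariance identities $\varpi_N^0\circ T(\ell)=(1+\ell\langle\ell\rangle)\circ\varpi_N^0$ and $\varpi_N^0\circ T(N)=\varpi_N^0$, where $\langle\ell\rangle$ now acts on $H^2(\Z[\zeta_N,\tfrac1{Np}],\Zp(2))^+$ through $\Gal(\Q(\zeta_N)/\Q)$ as above. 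These identities I would establish by lifting the Manin symbols and the cyclotomic units to level $N\ell$ (resp.\ to a suitable auxiliary level), where the Heilbronn contributions assemble into products of cyclotomic units at level $N\ell$ that are controlled by the distribution/norm relations, and then descending along the norm $\Q(\zeta_{N\ell})\to\Q(\zeta_N)$, which is compatible with $\varpi^0$ via the degeneracy maps $X_1(N\ell)\rightrightarrows X_1(N)$. Along the way one may first split off the contribution of the tame place $N$: applying the residue map $\partial$ of sequence (\ref{Boundary1}) and using the commutative square (\ref{tamediagram}) together with Proposition \ref{Prop 4.10} reduces $\partial(\varpi_N^0(\eta x))$ to a computation on the cusps $\tilde C_\infty$, after which one is left with the vanishing of a class in the unramified subgroup $H^2(\Z[\zeta_N,\tfrac1p],\Zp(2))^+$.

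\textbf{Main obstacle.} The heart of the matter is precisely the cup-product identity of the previous paragraph. The Fukaya–Kato method was designed for levels divisible by $p$, and transplanting it to the prime level $N$ under the degenerate hypothesis $N\equiv1\bmod p$ is delicate: the Heilbronn and norm manipulations naturally produce factors such as $\ell-1$ and $N-1$, which are no longer units modulo $p^f$, so the reduction to the clean equivariance statement above must be carried out integrally rather than after inverting $p$. Handling the operator $T(N)=U_N$, whose action on the $[u,v]'$ involves cusps in $\tilde C_\infty$ (compare the proof of Lemma \ref{Lem 4.7}), is a secondary but real difficulty. Making these steps rigorous at level $N$ is presumably why the statement is still only conjectural.
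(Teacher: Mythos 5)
The statement you are asked to prove is a conjecture in the paper, not a theorem: the paper offers no proof of it, and Remark~\ref{Rem 4.6} explicitly notes that it is known only for levels $Np^{r}$ with $r>0$ (by Fukaya--Kato) and remains open at prime level $N$. The only unconditional evidence the paper records at this level is Lemma~\ref{Lem 4.8} (Busuioc, Sharifi), which gives the vanishing for the two operators $T(2)-1-2\langle 2\rangle$ and $T(3)-1-3\langle 3\rangle$; that result is obtained by direct manipulation of the Manin relations together with the Steinberg and bilinearity relations for cup products of cyclotomic units, not by the lifting-to-level-$N\ell$ norm argument you outline.

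Your proposal is accordingly not a proof, and you say as much. The ``Hecke-equivariance of $\varpi_{N}^{0}$'' identity to which your whole argument reduces is essentially a restatement of the conjecture itself, so the reduction buys nothing unless that identity is established independently. The preliminary steps are sound: reducing to the generators $[u,v]^{*}$ and to the operators $T(\ell)-1-\ell\langle\ell\rangle$ and $T(N)-1$, and splitting off the tame place $N$ via the residue map, which is consistent with the paper's diagram~(\ref{tamediagram}) and Propositions~\ref{Prop 4.9} and~\ref{Prop 4.10}. But the core step --- assembling the Heilbronn-matrix contributions into norm-compatible products of cyclotomic units at an auxiliary level and descending --- is exactly where the Fukaya--Kato machinery uses divisibility of the level by $p$ in an essential way, and you supply no substitute; you correctly flag the non-invertibility of $\ell-1$ and $N-1$ modulo $q$ and the behaviour of $T(N)=U_{N}$ on the cusps as the obstructions. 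So the honest verdict is: a correct identification of the known strategy and of why it currently fails at prime level, but no proof --- and the paper does not contain one either.
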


\begin{lemma}[Busuioc, Sharifi]\label{Lem 4.8}
We have
\[\varpi^0\circ(T(2)-1-2\langle 2\rangle)=0,
\]
\[\varpi^0\circ(T(3)-1-3\langle 3\rangle)=0.
\]
\end{lemma}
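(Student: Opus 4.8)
\textbf{Proof proposal for Lemma~\ref{Lem 4.8} (Busuioc--Sharifi).}

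The plan is to verify these two vanishing statements by explicit computation with Manin symbols, exploiting the fact that $\varpi_N^0$ is defined on a finitely-presented $\Zp[\F_N^\times]$-module and lands in a group built from cup products $(1-\zeta_N^u,1-\zeta_N^v)^+$ in $K_2(\Z[\zeta_N^+,\tfrac1N])\otimes\Zp$. First I would recall the Hecke action on Manin symbols for $X_1(N)$ at level $N$: for a prime $l\neq N$, $T(l)[u,v]^*$ is a sum over the standard coset representatives for $\Gamma_1(N)$-double cosets, i.e. $T(l)[u,v]^* = [lu,v]^* + \sum_{j=0}^{l-1}[u+jv\,(?),\,lv]^*$ with the appropriate diamond twist, together with $\langle l\rangle$ acting by $[u,v]^*\mapsto[l^{-1}u,l^{-1}v]^*$ as in Lemma~\ref{Lem 4.5}. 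I would write the operator $T(l)-1-l\langle l\rangle$ (the generator of $\mathfrak{I}_\infty$ for $l\nmid N$) applied to a general generator $[u,v]^*$ as an explicit $\Z$-linear combination of Manin symbols.

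Then I would apply $\varpi_N^0$ term by term, using $\varpi_N^0([u,v]^*) = (1-\zeta_N^u,1-\zeta_N^v)^+$, and reduce the resulting identity in $K_2(\Z[\zeta_N^+,\tfrac1N])\otimes\Zp$ to a statement about cup products of cyclotomic units. The key input is the bilinearity and Steinberg relation for the symbol $(\,\cdot\,,\,\cdot\,)$ together with the norm-compatibility / distribution relations satisfied by $1-\zeta_N^a$: specifically, for a prime $l$, $\prod_{j\bmod l}(1-\zeta_N^{u+jw}) $ can be rewritten as a power of $1-\zeta_{N}^{lu'}$-type element (when $l$ is coprime to $N$ this is an automorphism of the cyclotomic unit group), and this is exactly what matches the two copies of $[u,v]^*$ coming from the ``$-1-l\langle l\rangle$'' part of the operator. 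Setting $l=2$ and $l=3$ makes the combinatorics completely explicit: there are only two, resp.\ three, terms in the $T(l)$-sum, so one can check the cancellation by hand after invoking the basic cup-product identities and the action of $\langle l\rangle$ computed in Lemma~\ref{Lem 4.5}.

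The main obstacle is organizing the coset bookkeeping for $T(l)$ on \emph{relative} homology $H_1(X_1(N),\tilde C_\infty,\Zp)^+$ in a way compatible with the $[u,v]^*$-presentation, including getting the diamond-operator normalization and the $^+$-symmetrization exactly right; a sign or index error there would break the cancellation. Once that is pinned down for the two small primes, the $K_2$-side computation is forced by bilinearity and the distribution relation, so the remaining work is routine. (Because only $l=2,3$ are needed here, one avoids the general combinatorial argument; the statement for all $\eta\in\mathfrak{I}_\infty$, i.e. Conjecture~\ref{Con 4.1}, would require the full machinery and is not claimed by this lemma.) I would cite Busuioc and Sharifi for the original verification and present the $l=2,3$ computations as the content of the proof.
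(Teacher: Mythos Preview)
Your proposal is correct in outline and, in fact, goes well beyond what the paper does: the paper's ``proof'' is simply a citation to \cite[Theorem 1.2]{BA} (Busuioc), with no argument reproduced. What you sketch---writing $T(l)$ on Manin symbols via Merel/Heilbronn matrices, applying $\varpi_N^0$ termwise, and cancelling in $K_2$ using bilinearity, the Steinberg relation $(a,1-a)=0$, and the factorizations $1-\zeta_N^{2u}=(1-\zeta_N^u)(1+\zeta_N^u)$ (and the analogous cubic identity for $l=3$)---is exactly Busuioc's method, independently carried out by Sharifi.

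Two small comments. First, the ``$(?)$'' you left in the Hecke formula is the one place where the proof can go wrong, so you should fix a precise formula (e.g.\ via Merel's Heilbronn matrices for level $\Gamma_1(N)$) before computing; the $W_N$-twist in the adjusted symbols $[u,v]^*$ and the diamond normalization of Lemma~\ref{Lem 4.5} must be tracked through this. Second, the ``distribution relation'' you invoke is not the full norm relation for cyclotomic units but rather the elementary factorization of $1-\zeta_N^{lu}$ over the $l$th roots; this together with the Steinberg relation is already enough for $l=2,3$, and is what makes these two cases accessible by hand while the general case (Conjecture~\ref{Con 4.1}) remains open at prime-to-$p$ level.
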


\begin{proof}
For the proof, one can see \cite[Theorem 1.2]{BA}.
\end{proof}

\begin{remark}\label{Rem 4.6}
Conjecture \ref{Con 4.1} has been proved when the level of the modular curve is $Np^r$ ($r>0$): see \cite[Theorem 5.2.3]{FK}. Similar conjecture has also been considered in \cite{Lec2}. We will give some partial results for this conjecture in the forthcoming paper.
\end{remark}

Let $\widetilde{I}=I_{\infty}+I_{G}$. Then we have the following map of exact sequences which gives the relationship between the cohomology of $X_{1}(N)$ and $X_{0}(N)$:
\begin{equation*}
\begin{tikzcd}
0\arrow[r]& \frac{H^1(X_{1}(N),\Zp)^{-}}{\widetilde{I}H^1(X_{1}(N),\Zp)^{-}}\arrow[d,"\pi",]\arrow[r]& \frac{H^1(X_{1}(N),\Zp)}{\widetilde{I}H^1(X_{1}(N),\Zp)}\arrow[d,"\cong"]\arrow[r]& \frac{H^1(X_{1}(N),\Zp)^{+}}{\widetilde{I}H^1(X_{1}(N),\Zp)^{+}}\arrow[d,"\cong"]\arrow[r]&0\\
0\arrow[r]&\frac{IH^1(X_{0}(N),\Zp)^{-}}{I^2H^1(X_{0}(N),\Zp)^{-}}\arrow[r]&\frac{IH^1(X_{0}(N))^{-}\oplus H^1(X_{0}(N))^{+}}{I^2H^1(X_{0}(N))^{-}\oplus IH^1(X_{0}(N))^{+}}\arrow[r]&\frac{H^1(X_{0}(N),\Zp)^{+}}{IH^1(X_{0}(N),\Zp)^{+}}\arrow[r]&0
\end{tikzcd}
\end{equation*}

\begin{remark}
The surjectivity of the vertical maps is from Theorem \ref{Thm 3.3}. The injectivity of the vertical maps is from \cite[Proposition 4.5]{Lec2}.
\end{remark}
\begin{remark}\label{Rem 4.7}
Note that the sequence  
\[
0\rightarrow \frac{H^1(X_{1}(N),\Zp)^{-}}{\widetilde{I}H^1(X_{1}(N),\Zp)^{-}}\rightarrow \frac{H^1(X_{1}(N),\Zp)}{\widetilde{I}H^1(X_{1}(N),\Zp)}\rightarrow \frac{H^1(X_{1}(N),\Zp)^{+}}{\widetilde{I}H^1(X_{1}(N),\Zp)^{+}}\rightarrow 0
\]
also gives the same extension class as $b$ in $H^1(\Z[\frac{1}{Np}],P\otimes I/I^2)$. It gives a character 
\begin{equation*}
\chi_{b}:G_{-1}\rightarrow \frac{H^1(X_{1}(N),\Zp)^{-}}{\tilde{I}H^1(X_{1}(N),\Zp)^{-}}\cong \frac{IH^1(X_{0}(N),\Zp)^{-}}{I^2H^1(X_{0}(N),\Zp)^{-}}\cong I/I^2\otimes P.
\end{equation*}
\end{remark}

Now, we make the following conjecture that is analogous to Conjecture \ref{Sharificonj} in the Introduction.

\begin{conj}\label{Con 4.2}
\begin{enumerate}
\item The map $\varpi_{N}$ induces an isomorphism:
\[
\frac{H^1(X_{1}(N),\Zp)^-(1)}{I_{\infty}H^1(X_{1}(N),\Zp)^-(1)}\cong\frac{H_1(X_{1}(N),\Zp)^+}{I_{\infty}H_1(X_{1}(N),\Zp)^+}
\xrightarrow{\varpi} H^2(\Z[\zeta_{N}^+,\tfrac{1}{p}],\Zp(2)).
\]
\item Let $\varpi_{N,G}$ be the map induced by $\varpi_{N}$ on $G$-coinvariants. It is a map
\[\frac{IH^{1}(X_{0}(N),\Zp)^{-}(1)}{I^2H_{1}(X_{0}(N),\Zp)^{-}(1)}\rightarrow H^2(\Z[\zeta_{N},\tfrac{1}{p}],\Zp(2))_{G}\cong G_{-1}\otimes\mu_{q}.\]
Twisting the coefficients, we have a map
\[\varpi_{N,G}\otimes\mu_{q}^{-1}:\frac{IH^{1}(X_{0}(N),\Zp)^{-}}{I^2H_{1}(X_{0}(N),\Zp)^{-}}\rightarrow H^2(\Z[\zeta_{N}^+,\tfrac{1}{p}],\Zp(2))_{G}\otimes \mu_{q}^{-1}\cong G_{-1}.\]
Then
\[\chi_{b}\circ (\varpi_{N,G}\otimes\mu_{q}^{-1})=(\varpi_{N,G}\otimes\mu_{q}^{-1})\circ \chi_{b}=1.\]
\end{enumerate}
\end{conj}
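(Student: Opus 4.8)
The plan is to single out part (1) as the substantive statement---a level-$N$ analogue of the Fukaya--Kato theorem proving Conjecture~\ref{Sharificonj}---and to deduce part (2) formally from part (1) together with the $\Gal(\overline{\Q}/\Q)$-equivariance of $\varpi_N$ and the $K$-theoretic identifications of Corollary~\ref{new4} and Proposition~\ref{Prop 4.6}.

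For part (1): granting Conjecture~\ref{Con 1.1} for $X_1(N)$, the map $\varpi_N^0$ (hence $\varpi_N$) kills $\mathfrak{I}_\infty$, so the map in the statement is well defined, and by Proposition~\ref{Prop 4.11} it lands in $H^2(\Z[\zeta_N^+,\tfrac1p],\Zp(2))\cong K_{2}(\Z[\zeta_N^+])\otimes\Zp$. To prove it is an isomorphism I would follow \cite[Sections~5--6]{FK}. For surjectivity, compare the topological boundary $\partial'$ on $H_1(X_1(N),\widetilde{C}_\infty,\Zp)^+$ with the tame-symbol residue $\partial$ via the commutative diagram (\ref{tamediagram}), whose boundary values are Propositions~\ref{Prop 4.9} and~\ref{Prop 4.10}; this reduces the claim to surjectivity on kernels, which holds because $K_{2}(\Z[\zeta_N^+])\otimes\Zp$ is generated by the Steinberg symbols $(1-\zeta_N^u,1-\zeta_N^v)^+$ of cyclotomic units, i.e. by the images under $\varpi_N^0$ of the Manin symbols $[u,v]^*$. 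For bijectivity, compare orders: by Conjecture~\ref{Con 1.1} and the analogue of Theorem~\ref{mazur}(1) for $X_1(N)$, the source is a cyclic Hecke module whose order in each $\Delta$-eigenspace is the corresponding Eisenstein congruence index, and by the Mazur--Wiles theorem (equivalently, in terms of generalized Bernoulli numbers) these match the orders of the $\Delta$-eigenspaces of $K_{2}(\Z[\zeta_N^+])\otimes\Zp$. A surjection of finite groups of equal order is an isomorphism, and the Gorenstein property (the analogue of Theorem~\ref{mazur}(6) for the Eisenstein component of $X_1(N)$) promotes it to an isomorphism of Hecke--Galois modules.

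For part (2): by Remark~\ref{Rem 4.7}, $\chi_b$ is the extension class of the sequence $0\to H^1(X_1(N),\Zp)^-/\widetilde{I}\to H^1(X_1(N),\Zp)/\widetilde{I}\to H^1(X_1(N),\Zp)^+/\widetilde{I}\to 0$ of $\Gal(\overline{\Q}/K)$-modules, under $H^1(X_1(N),\Zp)^-/\widetilde{I}\cong IH^1(X_0(N))^-/I^2H^1(X_0(N))^-\cong I/I^2\otimes P$. Taking $G$-coinvariants of the isomorphism of part (1) and twisting by $\mu_q^{-1}$ produces $\varpi_{N,G}\otimes\mu_q^{-1}\colon I/I^2\otimes P\to G_{-1}$, where Corollary~\ref{new4} and Proposition~\ref{Prop 4.6} identify $H^2(\Z[\zeta_N,\tfrac1p],\Zp(2))_G\otimes\mu_q^{-1}$ with $G_{-1}$. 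Since $\varpi_N$ is built from étale cup products of cyclotomic units, it is $\Gal(\overline{\Q}/\Q)$-equivariant; transporting the extension class $\chi_b$ along it therefore returns the tautological $G_{-1}$-extension, namely the class encoding the action of $G_{-1}$ on itself from Proposition~\ref{Prop 4.1}. This is exactly the identity $(\varpi_{N,G}\otimes\mu_q^{-1})\circ\chi_b=\mathrm{id}_{G_{-1}}$. Finally, $\chi_b$ is an isomorphism because $\widetilde{b}$ is a unit in $\Z/q\Z$ (by \cite{CE} or \cite{WWE}) and $\varpi_{N,G}\otimes\mu_q^{-1}$ is one by part (1), so the reverse composition $\chi_b\circ(\varpi_{N,G}\otimes\mu_q^{-1})$ is automatically the identity on $I/I^2\otimes P$ as well.

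The main obstacle is the isomorphism in part (1) at level exactly $N$. The Fukaya--Kato proof of Conjecture~\ref{Sharificonj} is available only for levels $Np^r$ with $r>0$, where control of the relevant module comes from the cyclotomic Iwasawa main conjecture; at $r=0$ one must replace this input. One route is to rerun the local-deformation and duality argument directly in the Eisenstein-prime setting $N\equiv 1\bmod p$, exploiting the Gorenstein property and Mazur's explicit splitting of the $I$-torsion of $\textnormal{Jac}(X_0(N))$ (Remark~\ref{Rem 3.12}); another is to descend from the $r\ge1$ case via the norm maps $H_1(X_1(Np^r),\Zp)\to H_1(X_1(N),\Zp)$ and $H^2(\Z[\zeta_{Np^r},\tfrac1p],\Zp(2))\to H^2(\Z[\zeta_N,\tfrac1p],\Zp(2))$, checking that $\varpi$ intertwines them and that the Eisenstein quotients survive. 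In either approach the delicate point is controlling the non-semisimplicity of the Eisenstein component, so that ``surjective between finite groups of equal order'' genuinely forces an isomorphism of Hecke--Galois modules; the order count itself, once Conjecture~\ref{Con 1.1} is granted, is routine.
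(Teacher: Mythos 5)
You are offering a proof of a statement the paper itself presents as a \emph{conjecture}: Conjecture \ref{Con 4.2} is one of the two unproved inputs to Theorem \ref{computationb}, and the author gives no argument for it, precisely because the Fukaya--Kato machinery behind Conjecture \ref{Sharificonj} is only available at levels $Np^{r}$ with $r>0$. Your sketch for part (1) (surjectivity from the generation of $K_2(\Z[\zeta_N^+])\otimes\Zp$ by cyclotomic Steinberg symbols via the boundary comparison, then an order count) is the natural strategy, but as you concede, the essential input is missing at level exactly $N$: the Iwasawa-theoretic control of the Eisenstein component in \cite{FK} uses $p$ in the level, and the Mazur--Wiles main conjecture governs \emph{unramified} class-group quotients, whereas here $G_{-1}$ is totally tamely ramified at $N$ (Proposition \ref{Prop 4.1}). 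Moreover, even the well-definedness of $\varpi_N$ on the Eisenstein quotient presupposes Conjecture \ref{Con 4.1}, also open at this level. So part (1) remains a research program, not a proof.

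The concrete logical gap is your claim that part (2) follows formally from part (1) together with Galois equivariance. Knowing that $\varpi_{N,G}\otimes\mu_q^{-1}\colon I/I^2\otimes P\to G_{-1}$ is \emph{an} isomorphism says nothing about \emph{which} isomorphism it is; the entire content of part (2) is that it is inverse to $\chi_b$. The assertion that ``transporting the extension class along $\varpi_N$ returns the tautological extension'' is exactly the reciprocity statement to be proved: $\chi_b$ is extracted from the Galois action on the \'etale cohomology of the modular curve, while $\varpi_N$ is defined by cup products of cyclotomic units, and the compatibility of these two constructions is the analogue of Conjecture \ref{Sharificonj}. In the $Np^r$ setting this compatibility is the main theorem of \cite{FK}, proved by a long argument with zeta elements --- it is not a consequence of equivariance. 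Your final step (that the reverse composition is then automatically the identity because both maps are isomorphisms) is fine, but only once the forward composition has been shown to be the identity, which is the point you have assumed rather than established.
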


\begin{lemma}\label{SB1}
In the following diagram
\[\begin{tikzcd}
H_1(X_{1}(N),\tilde{C}_{\infty},\Zp)^+\ar[d,"\pi"]\ar[r,"\varpi_{N}^0"]& H^2(\Z[\zeta_{N}^+,\tfrac{1}{Np}],\Zp(2))\ar[d]\\
H_1(X_{0}(N),\Zp)^+\ar[r,"\varpi_{N,G}^0"]& H^2(\Z[\tfrac{1}{Np}],\Zp(2)),
\end{tikzcd}\]
the vertical maps are both surjective. The map $\varpi_{N,G}^0$ factors through the quotient by $IH_1(X_{0}(N),\Zp)^+$.
\end{lemma}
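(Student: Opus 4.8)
The plan is to establish the three assertions separately: surjectivity of the right vertical map, surjectivity of $\pi$, and the factorization (the last one in particular making $\varpi_{N,G}^{0}$ well defined).

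\emph{The right vertical map.} This is the natural map $H^{2}(\Z[\zeta_{N}^{+},\tfrac{1}{Np}],\Zp(2))\to H^{2}(\Z[\tfrac{1}{Np}],\Zp(2))$. I would identify it with the projection onto $\Gal(\Q(\zeta_{N}^{+})/\Q)$-coinvariants by means of the isomorphism $H^{2}(\Z[\zeta_{N}^{+},\tfrac{1}{Np}],\Zp(2))_{\Gal(\Q(\zeta_{N}^{+})/\Q)}\cong H^{2}(\Z[\tfrac{1}{Np}],\Zp(2))$, proved exactly as in Lemma~\ref{new3} from \cite[Proposition 3.3.11]{NSW}. A projection onto coinvariants is surjective, so this step is immediate.

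\emph{Surjectivity of $\pi$.} Here $\pi$ is the pushforward $\pi_{*}$ along $X_{1}(N)\to X_{0}(N)$. By Lemma~\ref{Lem 4.6} and Theorem~\ref{Thm 4.2} on the $+$-part, $H_{1}(X_{1}(N),\tilde{C}_{\infty},\Zp)^{+}$ is generated over $\Zp$ by the adjusted Manin symbols $[u,v]^{*}=W_{N}[u,v]^{+}$. The formula $[u,v]'=\{-d/(bN),-c/(aN)\}$ of Definition~\ref{Def 4.5} shows that both endpoints of $\pi([u,v]^{*})$ reduce in $X_{0}(N)$ to the cusp $\infty$ (as $N$ divides $bN$ and $aN$), so $\pi([u,v]^{*})$ is a loop at $\infty$ and thus lies in $H_{1}(X_{0}(N),\{\infty\},\Zp)^{+}=H_{1}(X_{0}(N),\Zp)^{+}$, consistently with the diagram. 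It remains to see that these classes generate $H_{1}(X_{0}(N),\Zp)^{+}$ \emph{over $\Zp$}. Since $X_{0}(N)=X_{1}(N)/\Delta$, the map $\pi_{*}$ kills $(\langle j\rangle-1)$-elements and hence factors through $H_{1}(X_{1}(N),\tilde{C}_{\infty},\Zp)^{+}_{\Delta}$; I would then show the induced map $H_{1}(X_{1}(N),\tilde{C}_{\infty},\Zp)^{+}_{\Delta}\to H_{1}(X_{0}(N),\Zp)^{+}$ is surjective, combining the Manin presentation of $H_{1}(X_{0}(N),\textnormal{cusps},\Zp)^{+}$ with the description of $H_{1}(X_{0}(N),\Zp)^{+}$ as the kernel of the boundary map to the degree-zero divisors supported on $\{0,\infty\}$. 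I expect this integral statement — trivial after $\otimes_{\Zp}\Qp$ but the genuine content over $\Zp$, and precisely the reason one works with the relative homology group $H_{1}(X_{1}(N),\tilde{C}_{\infty},\Zp)$ rather than with $H_{1}(X_{1}(N),\Zp)$ — to be the main obstacle.

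\emph{The factorization.} By construction $\varpi_{N,G}^{0}$ is characterized by $\varpi_{N,G}^{0}\circ\pi=\rho\circ\varpi_{N}^{0}$, with $\rho$ the projection of the first step; that $\rho\circ\varpi_{N}^{0}$ indeed factors through $\pi$ follows from the $\Delta$-equivariance of $\varpi_{N}^{0}$ (it matches diamond operators with Galois automorphisms of $\Q(\zeta_{N}^{+})$) together with the coinvariants description of $\pi$, so $\varpi_{N,G}^{0}$ is well defined. To prove it annihilates $IH_{1}(X_{0}(N),\Zp)^{+}$, I would use that $\pi_{*}$ is equivariant for the Hecke operators $T(\ell)$ $(\ell\neq N)$ and $U_{N}$ and sends the diamonds to the identity, together with $U_{N}=T(N)=-W_{N}$ on $X_{0}(N)$; these give, for $y\in H_{1}(X_{1}(N),\tilde{C}_{\infty},\Zp)^{+}$,
\[
\eta_{\ell}\cdot\pi_{*}y=-\pi_{*}\bigl((T(\ell)-1-\ell\langle\ell\rangle)y\bigr),\qquad (W_{N}+1)\cdot\pi_{*}y=-\pi_{*}\bigl((T(N)-1)y\bigr),
\]
with $T(\ell)-1-\ell\langle\ell\rangle$ and $T(N)-1$ in $\mathfrak{I}_{\infty}$. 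Since $\pi$ is surjective, every element of $IH_{1}(X_{0}(N),\Zp)^{+}$ has the form $\eta\cdot\pi_{*}y$ for $\eta$ in a generating set of $I$, so it suffices to know $\rho(\varpi_{N}^{0}(\xi z))=0$ for $\xi$ in a generating set of $\mathfrak{I}_{\infty}$ pushing forward onto one of $I$. This is exactly the relation $\varpi_{N}^{0}\circ\xi=0$ of Conjecture~\ref{Con 4.1}; it is unconditional for $\xi=T(2)-1-2\langle2\rangle$ and $\xi=T(3)-1-3\langle3\rangle$ by Busuioc--Sharifi (Lemma~\ref{Lem 4.8}), which suffices whenever $I$ is generated by $\eta_{2}$ and $\eta_{3}$ (so that $W_{N}+1$ lies in that ideal as well). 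Granting this, $\varpi_{N,G}^{0}(Iz)=0$ for all $z$, i.e. $\varpi_{N,G}^{0}$ factors through $H_{1}(X_{0}(N),\Zp)^{+}/IH_{1}(X_{0}(N),\Zp)^{+}$.
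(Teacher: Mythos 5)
Your argument for the factorization step is where the proposal diverges from the paper, and it has a genuine gap: you reduce the statement $\varpi_{N,G}^{0}(IH_1(X_0(N),\Zp)^+)=0$ to the annihilation statement $\varpi_N^0\circ\xi=0$ for $\xi$ generating $\mathfrak{I}_\infty$, which is exactly Conjecture \ref{Con 4.1} — an open conjecture in this setting (level prime to $p$), known unconditionally only for $\xi=T(2)-1-2\langle 2\rangle$ and $T(3)-1-3\langle 3\rangle$. Your fallback, "this suffices whenever $I$ is generated by $\eta_2$ and $\eta_3$," is an unproven hypothesis on $N$ (Mazur's good primes need not include $2$ and $3$), and you would additionally need $\varpi_N^0\circ(T(N)-1)=0$ to handle $W_N+1$, which Busuioc--Sharifi does not give. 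So as written your proof is conditional, whereas the lemma is stated and used unconditionally. The paper avoids this entirely by a direct computation: $\pi([u,v]^{*})=-[\tfrac{u}{v},1]=-\{0,\tfrac{1}{v/u}\}$, while $\rho\circ\varpi_N^0([u,v]^{*})=\tfrac{u}{v}\in\F_N^\times\otimes\Zp$ by Proposition \ref{Prop 4.9}; comparing with Mazur's formula $\Phi(a/b)=\phi(\bar b^{-1})$ (Theorem \ref{mazur}(3)) identifies $\varpi_{N,G}^0$ with $\phi^{-1}$ composed with reduction modulo $IH_1(X_0(N),\Zp)^+$, and the factorization is then immediate from the fact that $\phi$ is an isomorphism $U\isoto H_1^+/IH_1^+$. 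No input about the Eisenstein ideal annihilating anything under $\varpi_N^0$ is needed.

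Two smaller points. First, your treatment of the surjectivity of $\pi$ is only a plan ("I would then show\dots"); the integral statement you flag as the main obstacle is in fact elementary: the Manin symbols $[u:v]$ on $X_0(N)$ with $u,v\neq 0$ already generate $H_1(X_0(N),\Zp)$ exactly because there are only two cusps (an element of $H_1(X_0(N),\mathrm{cusps},\Zp)$ with vanishing boundary has equal coefficients on $[1:0]$ and $[0:1]$, and $[1:0]+[0:1]=0$), and each such symbol is $\pi_*$ of a Manin symbol on $X_1(N)$. Second, be careful with the claim that both endpoints of $[u,v]'$ lie over $\infty$: this uses $u,v\neq 0$, which is the reason the source is the relative homology with respect to $\tilde C_\infty$ rather than all cusps.
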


\begin{proof}
The surjectivity of vertical maps is from the definitions and the fact that there are only two cusps on $X_{0}(N)$. Note that $\pi([u,v]^{*})=-\pi(\langle v\rangle[\frac{u}{v},1])=-[\frac{u}{v},1]=-\{0,\frac{1}{\frac{v}{u}}\}$. By Proposition \ref{Prop 4.9}, $[u,v]^{*}$ maps to $\frac{u}{v}\in H^2(\tfrac{1}{Np},\Zp(2))$. So $\varpi_{N,G}^0$ maps $\{0,\frac{1}{\frac{v}{u}}\}$ to $\frac{v}{u}$. From this, we know that $\varpi_{N,G}^0=\phi^{-1}$, where $\phi$ is the map defined in Theorem \ref{mazur}.  By Theorem \ref{mazur}, we know that $\varpi_{N,G}^0$ factors through $IH_1(X_{0}(N),\Zp)^+$.
\end{proof}

\begin{remark}
We also have the following commutative diagram
\[\begin{tikzcd}
H_1(X_{1}(N),\Zp)^+\ar[d,"\pi"]\ar[r,"\varpi_{N}"]& H^2(\Z[\zeta_{N}^+,\tfrac{1}{p}],\Zp(2))\ar[d]\\
IH_1(X_{0}(N),\Zp)^+\ar[r,"\varpi_{N,G}"]& H^2(\Z[\zeta_{N}^+,\tfrac{1}{p}],\Zp(2))_{G}.
\end{tikzcd}\]
However, we cannot prove that $\varpi_{N,G}$ factors through the quotient by $I^2H_1(X_{0}(N),\Zp)^+$.
\end{remark}

\subsection{Computation of \textit{b}}
In this section, assuming Conjecture \ref{Con 4.1} and Conjecture \ref{Con 4.2}, we compute the invariant $\tilde{b}$. 
Recall that in Theorem \ref{mazur}, we have a canonical isomorphism:
\[\phi:U\rightarrow H_{1}(X_{0}(N),\Zp)^+/IH_{1}(X_{0}(N),\Zp)^+\]
We identify the two groups by $\phi$.

\begin{lemma}\label{Lem 4.9}
For $u,v\in\F_{N}^{\times}$,
the image of $(l+\langle l\rangle-T(l))[u,v]^{*}$ in $\frac{IH_1(X_{0}(N),\Zp)^{+}}{I^2H_{1}(X_{0}(N),\Zp)^{+}}\cong I/I^2\otimes \frac{H_1(X_{0}(N),\Zp)^{+}}{IH_{1}(X_{0}(N),\Zp)^{+}}$ is $\eta_{l}\otimes\frac{u}{v}$, where we view $\frac{u}{v}$ as an element in $U\cong \frac{H_1(X_{0}(N),\Zp)^{+}}{IH_{1}(X_{0}(N),\Zp)^{+}}$.
\end{lemma}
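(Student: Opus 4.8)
The plan is to compute the action of the Hecke operator $T(l)$ on the adjusted Manin symbol $[u,v]^{*}$ modulo $I^2$ and extract the leading term. First I would recall that $\eta_l = 1+l-T(l)$ (with the diamond operator $\langle l\rangle$ trivial on $X_0(N)$-homology, so $l+\langle l\rangle - T(l)$ reduces to $\eta_l$ when we push to $X_0(N)$; at the level of $X_1(N)$ we keep $l+\langle l\rangle - T(l)$ and observe that its image under $\pi$ is $\eta_l$ times the image of $\langle l\rangle$, which acts trivially modulo $I$). Since $\eta_l \in I$, the image of $(l+\langle l\rangle - T(l))[u,v]^{*}$ lies in $I H_1(X_0(N),\Zp)^+$, and its class in $IH_1/I^2 H_1 \cong I/I^2 \otimes (H_1^+/IH_1^+)$ is $\eta_l \otimes \overline{[u,v]^{*}}$, where $\overline{[u,v]^{*}}$ denotes the image of $[u,v]^{*}$ in $H_1^+/IH_1^+$.

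The key step is therefore to identify $\overline{[u,v]^{*}}$ with $\tfrac{u}{v} \in U$ under $\phi$. For this I would use the computation already carried out in the proof of Lemma~\ref{SB1}: there one has $\pi([u,v]^{*}) = -\pi(\langle v\rangle[\tfrac{u}{v},1]) = -[\tfrac{u}{v},1] = -\{0,\tfrac{u}{v}\}$ in $H_1(X_0(N),\Zp)^+$ (using that diamond operators act trivially mod $I$, or more precisely recording the $\langle v\rangle$ and then projecting). Then by part (3) of Theorem~\ref{mazur}, the image of the modular symbol $\{0,a/b\}$ in $H_1^+/IH_1^+$ is $\phi(\bar b^{-1})$; applying this with the rational number $u/v$ in lowest terms (with denominator prime to $N$) shows $\{0,\tfrac{u}{v}\}$ maps to $\phi$ of the inverse of the denominator class. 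One then chases the sign and the $\langle v\rangle$-twist to conclude the class of $[u,v]^{*}$ in $H_1^+/IH_1^+$ is exactly $\tfrac{u}{v}$ as an element of $U$ under our identification $\phi$. Combining with the previous paragraph gives the asserted value $\eta_l \otimes \tfrac{u}{v}$.

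The main obstacle I expect is bookkeeping rather than conceptual: correctly tracking the difference between the $T(l)$-action and the $T(l)^{*}$-action under the Poincaré-duality identification (cf. Remark~\ref{Rem 4.2} and Remark~\ref{Rem 4.3}), and making sure the diamond-operator factor $\langle l\rangle$ appearing in $l+\langle l\rangle - T(l)$ is handled consistently — it is trivial modulo $I$ but this must be invoked at the right moment so that the leading term in $I/I^2$ is genuinely $\eta_l$ and not $\eta_l$ composed with a nontrivial diamond twist. A secondary point to verify carefully is that the hypothesis $u,v \in \F_N^{\times}$ (rather than allowing $u$ or $v$ to vanish) is what guarantees $u/v$ is a well-defined element of $U = \F_N^{\times}/(\F_N^{\times})^q$, so that the right-hand side makes sense. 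Once these identifications are pinned down, the statement follows directly from Theorem~\ref{mazur}(3) together with the description of $\eta_l$ and the isomorphism $IH_1/I^2H_1 \cong I/I^2 \otimes (H_1^+/IH_1^+)$ coming from principality of $I$ in the Eisenstein component (Theorem~\ref{mazur}(4)).
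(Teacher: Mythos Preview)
Your proposal is correct and follows essentially the same route as the paper: push $(l+\langle l\rangle-T(l))[u,v]^{*}$ down via $\pi$ to get $\eta_l\cdot\pi([u,v]^{*})$, then identify $\pi([u,v]^{*})$ with $\tfrac{u}{v}\in U$ by rewriting it as $-[\tfrac{u}{v},1]$ and invoking Theorem~\ref{mazur}(3). The only slip is that $[\tfrac{u}{v},1]=\{0,\tfrac{1}{x}\}$ for a lift $x$ of $\tfrac{u}{v}$ (not $\{0,\tfrac{u}{v}\}$), but you already flagged this bookkeeping as the point requiring care, and once corrected the conclusion is exactly as you state.
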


\begin{proof}
By definition, we have $\pi((l+\langle l\rangle-T(l))[u,v]^{*})=\eta_{l}\pi([u,v]^{*})$. Now, let us compute $\pi([u,v]^{*})=\pi(W_{\zeta_{N}}([u,v]))=W_{N}\pi([u,v])$. Using Hensel's lemma, we have that the image of $W_{N}$ in $\mathfrak{h}_{0}(N)_{\mathfrak{P}}$. Hence, in $H_1(X_{0}(N),\Zp)^+_{\mathfrak{P}}$, we have $\pi([u,v]^{*})=-\pi(\langle v\rangle[\frac{u}{v},1])=-[\frac{u}{v},1]$. Let $x\in \Z$ be a lifting of $\frac{u}{v}$. By computation, we have $[\frac{u}{v},1]=\{0,\frac{1}{x}\}$. Using Theorem \ref{mazur}, we have that the element $-\{0,\frac{1}{x}\}$ corresponds to $\frac{u}{v}\in U$.
\end{proof}

\begin{lemma}\label{Lem 4.10}
Via $\partial\circ\varpi_{N}^0$, $[u,v]^{*}$ maps to $\frac{u}{v}$ in $H^2(\Q_{N}(\zeta_{N}),\Zp(2))\cong U$.
\end{lemma}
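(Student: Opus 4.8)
The plan is to reduce this to the residue computation already carried out in Proposition \ref{Prop 4.9}, keeping careful track of the target groups. By definition $\varpi_N^0([u,v]^*) = (1-\zeta_N^u, 1-\zeta_N^v)^+ \in H^2(\Z[\zeta_N^+,\tfrac{1}{Np}],\Zp(2))$, so $\partial\circ\varpi_N^0([u,v]^*)$ is the image of this cup product under the residue map at the place above $N$. First I would recall that the place of $\Q(\zeta_N^+)$ above $N$ has residue field $\F_N$ (the prime $N$ is totally ramified in $\Q(\zeta_N)$, hence also in $\Q(\zeta_N^+)$, with residue field $\F_N$), so the local cohomology group $H^2(\Q(\zeta_N^+)_N,\Zp(2))$ is identified via the tame symbol and Hilbert symbol theory with $\F_N^\times\otimes\Zp$; this is the identification implicit in the statement ``$\cong U$'' once we pass to the quotient by $q$-th powers. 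Then the tame symbol formula gives $\partial(1-\zeta_N^u, 1-\zeta_N^v) = (-1)^{\mathrm{val}(\cdots)}\cdot\overline{\bigl(\tfrac{1-\zeta_N^u}{1-\zeta_N^v}\bigr)}$, and since $1-\zeta_N^u$ has valuation $1$ and $1-\zeta_N^v$ has valuation $1$ at the prime above $N$, the sign is $(-1)^{1\cdot 1}=-1$, and the residue is $-\,\overline{(1-\zeta_N^u)/(1-\zeta_N^v)}$ in $\F_N^\times\otimes\Zp$.

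Next I would compute $\overline{(1-\zeta_N^u)/(1-\zeta_N^v)}$ modulo the maximal ideal: writing $\zeta_N = 1+\pi$ for a uniformizer $\pi$, we have $1-\zeta_N^u \equiv -u\pi \pmod{\pi^2}$ and likewise $1-\zeta_N^v\equiv -v\pi$, so $(1-\zeta_N^u)/(1-\zeta_N^v) \equiv u/v$ in the residue field $\F_N$. Combining with the sign from the tame symbol, $\partial\circ\varpi_N^0([u,v]^*) = -\,\tfrac{u}{v}\in\F_N^\times\otimes\Zp$; but as observed in the proof of Proposition \ref{Prop 4.9}, $-1$ is trivial in $\F_N^\times\otimes\Zp$ since $\F_N^\times$ is cyclic of order $N-1$ and $p$ is odd (indeed $p\geq 5$), so $-1$ is a square and hence maps to $0$ after tensoring with $\Zp$. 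Therefore $\partial\circ\varpi_N^0([u,v]^*) = \tfrac{u}{v}$, which under the identification $\F_N^\times\otimes\Zp \twoheadrightarrow U = \F_N^\times/(\F_N^\times)^q$ gives $\tfrac{u}{v}\in U$, as claimed.

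The only genuinely substantive point — and the one I would spell out most carefully — is the identification of $H^2(\Q_N(\zeta_N),\Zp(2))$ with $U$ compatibly with the residue map; this is really the statement that the residue map $H^2(\Z[\zeta_N^+,\tfrac1N],\Zp(2))\to H^2(\Q(\zeta_N^+)_N,\Zp(2))$ of sequence (\ref{Boundary1}) is, under the étale Chern class isomorphism of Remark \ref{Rem 4.4}, the boundary map $K_2(\Z[\zeta_N^+,\tfrac1N])\otimes\Zp \to K_1(\F_N)\otimes\Zp = \F_N^\times\otimes\Zp$ of (\ref{Boundary2}), which is the tame symbol. Granting that (it is standard, and the excerpt's Remark \ref{Rem 4.4} already commits to it), the rest is the elementary local computation above. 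Everything else — the normalization of the tame symbol, the sign, and the triviality of $-1$ after $\otimes\Zp$ — is routine and parallels Proposition \ref{Prop 4.9} verbatim.
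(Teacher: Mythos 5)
Your proposal is correct and takes essentially the same route as the paper: the paper's proof is a one-line reduction to Proposition \ref{Prop 4.9}, whose tame-symbol computation you reproduce in more detail (valuations, the leading-term expansion $1-\zeta_N^u\equiv -u\pi$, and the identification of the \'etale residue with the $K$-theoretic tame symbol via Remark \ref{Rem 4.4}). One small quibble: $-1$ is trivial in $\F_N^{\times}\otimes\Zp$ because it has order $2$, which is prime to $p$ — not because it is a square in $\F_N^{\times}$, which would require $N\equiv 1\bmod 4$.
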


\begin{proof}
The image of $\varpi_{N}^0([u,v]^{*})$ in $H^2(\Q_{N}(\zeta_{N}),\Zp(2))$ is $(1-\zeta_{N}^u,1-\zeta_{N}^v)$ by definition. Via the map $H^2(\Q_{N}(\zeta_{N}),\Zp(2))\cong U$, the image of $(1-\zeta_{N}^u,1-\zeta_{N}^v)$ is exactly $\frac{u}{v}$ by Proposition \ref{Prop 4.9}. 
\end{proof}

Since $(l+\langle l\rangle-T(l))[u,v]^{*}\in H_1(X_{1}(N),\Zp)$, via $\varpi_{N}$, it maps into 
$$I_{G}/I_{G}^2 \otimes H^2(\Z[\tfrac{1}{Np},\zeta_{N}],\Zp(2))\cong G\otimes U.$$

\begin{lemma}\label{Lem 4.11}
Via $\varpi_{N}$, the element $(l+\langle l\rangle-T(l))[u,v]^{*}$ maps to $l^{l-1}\otimes \frac{u}{v}$.
\end{lemma}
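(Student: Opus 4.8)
The plan is to compute the image of $(l+\langle l\rangle - T(l))[u,v]^{*}$ under $\varpi_{N}$ by first tracking it through the map $\partial'$ of the boundary sequence (\ref{Boundary3}), just as in the computation of $\chi_a$ in Theorem \ref{Prop 4.3}. The element $(l+\langle l\rangle - T(l))[u,v]^{*}$ lands in $H_1(X_{1}(N),\Zp)^{+}$ by Lemma \ref{Lem 4.7} (since $\mathfrak{I}_0$ kills $\tilde C_\infty$), so that $\varpi_N$ is defined on it and its image is a genuine class in $H^2(\Z[\zeta_N,\tfrac1{Np}],\Zp(2))^+$. Its image there, however, is killed by $\mathfrak I_\infty$ (by Conjecture \ref{Con 4.1} applied to $X_1(N)$, which is an assumption of the relevant theorem), so it lies in $I_G H^2(\Z[\tfrac1{Np},\zeta_N],\Zp(2))$; moreover it is killed by $I_G^2$ for the usual Eisenstein-component reason, so it sits in $I_G/I_G^2 \otimes H^2(\Z[\tfrac1{Np},\zeta_N],\Zp(2))_G \cong G\otimes U$ via Corollary \ref{new4}. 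The task is thus to identify both tensor factors.

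For the $U$-factor, I would use Lemma \ref{Lem 4.10}: the residue $\partial \varpi_N^0([u,v]^*) = \tfrac uv \in \F_N^\times\otimes\Zp$, which via the identification $H^2(\Q_N(\zeta_N),\Zp(2))\cong U$ pins the $U$-component of the image of $[u,v]^*$ (and hence, by $\Zp[\F_N^\times]$-linearity of $\varpi_N^0$ and the fact that $l+\langle l\rangle - T(l)$ acts as multiplication by $\eta_l$ on the relevant quotient, the $U$-component of $(l+\langle l\rangle - T(l))[u,v]^*$) to be $\tfrac uv$. For the $G$-factor, the key input is that $l+\langle l\rangle - T(l) = \eta_l$ becomes, on passing to $I_G/I_G^2 \cong G$, the element $\mathrm{Frob}_l^{\,l-1}$ — this is exactly the content of Theorem \ref{mazur}(5) combined with the winding isomorphism (there $\Tilde e^+(\eta_l) = (l-1)\phi(\bar l)$, i.e. $\eta_l \mapsto \bar l^{\,l-1}$ under $I/I^2 \cong U$), transported to the $I_G$-side via Lemma \ref{new3}, which says $H^2(\Z[\zeta_N,\tfrac1p],\Zp(2)) = I_G H^2(\Z[\zeta_N,\tfrac1{Np}],\Zp(2))$, and via the identification $I_G/I_G^2 \xrightarrow{\sim} G$, $g-1\mapsto g$ of Corollary \ref{new4}. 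Combining the two factors gives that $(l+\langle l\rangle - T(l))[u,v]^*$ maps to $l^{l-1}\otimes\tfrac uv$ in $G\otimes U$.

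The main obstacle I anticipate is the bookkeeping of \emph{which} Hecke action and \emph{which} twist are in play at each stage: Poincar\'e duality switches $T(l)$ and $T(l)^*$ and introduces a Tate twist (Remarks \ref{Rem 4.2}, \ref{Rem 4.3}), the operator on $H_1(X_1(N))$ relevant to $I_\infty$ is $T(l) - 1 - l\langle l\rangle$ rather than $T(l) - l - \langle l\rangle$, and the diamond operators act through $\langle j\rangle[u,v]' = [j^{-1}u, j^{-1}v]'$ (Lemma \ref{Lem 4.5}). One must check carefully that after all these identifications the factor $\eta_l$ appearing here is genuinely the same $\eta_l = 1 + l - T(l)$ that enters Theorem \ref{mazur}(5), so that the image of $\eta_l$ in $G$ is $\mathrm{Frob}_l$ raised to the power $l-1$ (equivalently $l^{l-1}\in U\cong G_0$, transported to $G$) and not, say, its inverse or $l^{1-l}$. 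Once the compatibility of the $\partial$-diagram (\ref{tamediagram}) with the Hecke and Galois structures is nailed down, the computation reduces to the two one-line identifications above.
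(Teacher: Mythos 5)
There is a genuine gap in the identification of the $G$-factor, which is the heart of the lemma. You assert that ``$\eta_l$ becomes, on passing to $I_G/I_G^2\cong G$, the element $\mathrm{Frob}_l^{\,l-1}$'' and cite Theorem \ref{mazur}(5) plus the winding isomorphism for this. But Theorem \ref{mazur}(5) computes $\tilde e^+(\eta_l)=(l-1)\phi(\bar l)$ inside $H_1(X_0(N),\Zp)^+/IH_1(X_0(N),\Zp)^+$; it says nothing about the Galois cohomology group $H^2(\Z[\zeta_N,\tfrac{1}{Np}],\Zp(2))$, on which the Hecke algebra does not act at all, so there is no ``passage'' of $\eta_l$ to $I_G/I_G^2$ to be read off from it. Worse, importing the winding isomorphism at this point would make the eventual conclusion $\tilde b=1$ essentially circular: Theorem \ref{mazur}(5) is exactly the ingredient used at the very end (in Theorem \ref{computationb}) to compare the modular-symbol side (Lemma \ref{Lem 4.9}) with the Galois side (this lemma), so the $I_G$-component here must be computed intrinsically from $\varpi_N$.

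The missing argument is a two-step reduction. First, since $I_\infty$ is generated by $T(l)-1-l\langle l\rangle$, one has the congruence $l+\langle l\rangle-T(l)\equiv(l-1)(1-\langle l\rangle)\bmod\mathfrak I_\infty$, and Conjecture \ref{Con 4.1} (which you invoke only to place the image in $I_GH^2$) is what licenses replacing the Hecke element by this pure diamond-operator expression under $\varpi_N^0$. Second, $\varpi_N^0$ intertwines $\langle l\rangle$ with $\sigma_l^{-1}$: by Lemma \ref{Lem 4.5}, $\langle l\rangle[u,v]^*=[l^{-1}u,l^{-1}v]^*$, whose image $(1-\zeta_N^{l^{-1}u},1-\zeta_N^{l^{-1}v})$ is $\sigma_l^{-1}(1-\zeta_N^u,1-\zeta_N^v)$. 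Hence the image is $(l-1)(1-\sigma_l^{-1})\varpi_N^0([u,v]^*)$, and now $(l-1)(1-\sigma_l^{-1})\in I_G$ maps to $\sigma_l^{l-1}=l^{l-1}\in G$ under the normalization $g-1\mapsto g$ of Corollary \ref{new4}, while Lemma \ref{Lem 4.10} supplies the $U$-component $\tfrac{u}{v}$ exactly as you say. Your treatment of the $U$-factor and the observation that the element lies in $H_1(X_1(N),\Zp)^+$ via Lemma \ref{Lem 4.7} are fine; it is the diamond-to-Galois equivariance step that cannot be bypassed.
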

\begin{proof}
Note that by definition, we have 
\[\varpi_{N}((l+\langle l\rangle-T(l))[u,v]^{*})=\varpi_{N}^0((l+\langle l\rangle-T(l))[u,v]^{*}).\]
If we assume the Eisenstein quotient conjecture for $\varpi_{N}^0$ and note that 

\begin{equation}
    l+\langle l\rangle -T(l)=(l-1)(1-\langle l\rangle )\mod I_{\infty}.
\end{equation}
We have 
\[\varpi_{N}^0((l+\langle l\rangle-T(l))[u,v]^{*})=\varpi_{N}^0((l-1)(1-\langle l\rangle)[u,v]^{*})=(l-1)(1-\sigma_{l}^{-1})\varpi_{N}^0([u,v]^{*}).\]

\npr By the isomorphism in Corollary \ref{new4}, $(l-1)(1-\sigma_{l}^{-1})$ maps to $l^{l-1}\in G\otimes\Zp$. Since $\partial\varpi_{N}^0([u,v]^{*})=\frac{u}{v}\in U$, we have proved that $\varpi_{N}((l+\langle l\rangle-T(l))[u,v]^{*})=l^{l-1}\otimes \frac{u}{v}$.
\end{proof}

\begin{thm}\label{computationb}
Suppose that Conjecture \ref{Con 4.1} and Conjecture \ref{Con 4.2} hold. Then the invariant $\tilde{b}$ equals 1.
\end{thm}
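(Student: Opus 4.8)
The plan is to package together the computations already established in Lemmas \ref{Lem 4.9}, \ref{Lem 4.10}, \ref{Lem 4.11} and to chase the cocycle $\chi_b$ through the conjectural inverse $\varpi_{N,G}\otimes\mu_q^{-1}$ of Conjecture \ref{Con 4.2}. Recall from Remark \ref{Rem 4.7} that $\chi_b$ is the character $G_{-1}\to IH^1(X_0(N),\Zp)^-/I^2H^1(X_0(N),\Zp)^-\cong I/I^2\otimes P$ obtained from the extension of $H_1(X_1(N),\Zp)^+$-modules mod $\widetilde I = I_\infty + I_G$. The strategy is: compute the composite $(\varpi_{N,G}\otimes\mu_q^{-1})\circ\chi_b$ (or its inverse $\chi_b\circ(\varpi_{N,G}\otimes\mu_q^{-1})$) on an explicit set of generators of $G_{-1}$, show it is the identity, and conclude that the invariant $\tilde b$ — the integer in $(\Z/q\Z)^\times$ attached to $\chi_b$ via the identifications in Definition \ref{Def 4.2} — is $1$. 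By Conjecture \ref{Con 4.2}(2), $(\varpi_{N,G}\otimes\mu_q^{-1})\circ\chi_b$ is already the identity on $G_{-1}$; the content of the theorem is that the scalar $\tilde b$ attached to $\chi_b$ itself, with respect to the \emph{canonical} identifications $G_{-1}\cong U^{\otimes 2}\otimes\mu_q^{-1}$ (Remark \ref{canonicaliso}) and $P\otimes I/I^2\cong I^2/I^3\otimes\mu_q^{-1}\cong U^{\otimes 2}\otimes\mu_q^{-1}$ (via the winding isomorphism and Theorem \ref{mazur}(5)), equals $1$ — so one must check that the map $\varpi_{N,G}\otimes\mu_q^{-1}$ is, under these same canonical identifications, also the identity (equivalently, a unit that squares correctly).

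**First I would** make the generators explicit. An element of $G_{-1}\cong U^{\otimes 2}\otimes\mu_q^{-1}$ is, via Corollary \ref{new4} and Proposition \ref{Prop 4.6}, of the form $(\sigma_l-1)\otimes \bar l \otimes (\text{dual generator})$ for primes $l$, where $\sigma_l\in G$ is the Frobenius and $\bar l$ its image in $U=\F_N^\times/(\F_N^\times)^q$; indeed $I_G/I_G^2\otimes\F_N^\times$ is generated by such $(\sigma_l-1)\otimes\bar l$ as $l$ ranges over primes split appropriately, by Chebotarev. Then I would use Lemma \ref{Lem 4.11}: the element $(l+\langle l\rangle-T(l))[u,v]^*\in H_1(X_1(N),\Zp)^+$ maps under $\varpi_N$ to $l^{l-1}\otimes\frac uv\in G\otimes U$, i.e. to $(\sigma_l-1)^{l-1}\otimes\frac uv$ in $I_G/I_G^2\otimes U$. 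On the other hand, by Lemma \ref{Lem 4.9}, the same element maps under $\pi$ to $\eta_l\otimes\frac uv$ in $I/I^2\otimes (H_1^+/IH_1^+)$. Now apply the winding isomorphism $\Tilde e^+$ of Theorem \ref{mazur}(5): $\Tilde e^+(\eta_l)=(l-1)\phi(\bar l)$, so under the identification $I/I^2\cong U$ the class $\eta_l$ corresponds to $\bar l^{\,l-1}=(\sigma_l-1)^{l-1}$-analogue in $U$. Matching the two sides shows that $\varpi_{N,G}\otimes\mu_q^{-1}$ carries the canonical generator $(\sigma_l-1)\otimes\bar l$ of $G_{-1}\cong U^{\otimes 2}\otimes\mu_q^{-1}$ to $\eta_l\otimes\bar l$ under the canonical generator of the target, i.e. acts as the identity scalar. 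Combined with Conjecture \ref{Con 4.2}(2) that $\chi_b$ and $\varpi_{N,G}\otimes\mu_q^{-1}$ are mutually inverse, one gets $\tilde b\cdot 1 = 1$, hence $\tilde b=1$.

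**The hard part will be** bookkeeping the twists and the passage to $G$-coinvariants consistently: one must be careful that the identification $IH^1(X_0(N))^-/I^2H^1(X_0(N))^-$ with $I/I^2\otimes P$ in Remark \ref{Rem 4.7} agrees on the nose (not just up to a unit) with the identification used in Definition \ref{Def 4.2} to extract $\tilde b$, and that the Poincaré-duality twists recorded in Remarks \ref{Rem 4.2} and \ref{Rem 4.3} (which change $T(l)$ to $T(l)^*$ and the Galois action by a Tate twist) are accounted for when comparing the homological $\varpi_N$ with the cohomological $\chi_b$. The factor $l^{l-1}$ appearing in Lemma \ref{Lem 4.11} must be recognized as exactly the image of $\eta_l$ under the winding isomorphism composed with $\Tilde e^+$, which is what forces the scalar to be $1$ rather than some other unit. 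I would also double-check the compatibility of $\varpi_{N,G}^0$ with $\phi^{-1}$ established in Lemma \ref{SB1}, since that is what pins down the normalization on the $Q=H^1(X)^+/IH^1(X)^+$ side and ensures the $\mu_q^{-1}$-twist is inserted correctly. Once these normalizations are aligned, the computation is immediate from the three lemmas and Conjecture \ref{Con 4.2}.
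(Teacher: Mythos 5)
Your proposal is correct and follows essentially the same route as the paper: compute $\varpi_{N,G}(\eta_l\otimes\tfrac{u}{v})=l^{l-1}\otimes\tfrac{u}{v}$ by comparing Lemma \ref{Lem 4.9} with Lemma \ref{Lem 4.11} on the elements $(l+\langle l\rangle-T(l))[u,v]^{*}$, invoke Conjecture \ref{Con 4.2} to transfer this to $\chi_b$, and use the winding isomorphism's identification of $l^{l-1}$ with $\eta_l$ to read off $\tilde b=1$. Your extra attention to the normalization of the identifications (Remark \ref{Rem 4.7}, Lemma \ref{SB1}, the $\mu_q^{-1}$-twist) is sound and matches the implicit bookkeeping in the paper's shorter argument.
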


\begin{proof}
By Lemma \ref{Lem 4.9} and Lemma \ref{Lem 4.11}, we know that
\[\varpi_{N,G}(\eta_{l}\otimes\frac{u}{v})=l^{l-1}\otimes\frac{u}{v}.\]
Assuming Conjecture \ref{Con 4.2}, we know that
\[\chi_{b}(l^{l-1}\otimes\frac{u}{v}\otimes\zeta_{q})=\eta_{l}\otimes\frac{u}{v}\otimes \zeta_{q}.\]
Since the winding isomorphism identifies $l^{l-1}$ with $\eta_{l}$ (Theorem \ref{mazur}), we know that $\tilde{b}=1$.
\end{proof}

\end{document}